\documentclass{amsart}

\usepackage{amsmath,amsthm,amssymb,amsfonts}
\usepackage{enumitem}
\usepackage{graphicx,color}
\usepackage[english]{babel}
\usepackage{tikz}
\usepackage{pgfplots}

\usepackage{cleveref}
  \crefname{theorem}{Theorem}{Theorems}
  \crefname{lemma}{Lemma}{Lemmas}
  \crefname{remark}{Remark}{Remarks}
  \crefname{prop}{Proposition}{Propositions}
  \crefname{definition}{Definition}{Definitions}
  \crefname{corollary}{Corollary}{Corollaries}
  \crefname{conjecture}{Conjecture}{Conjectures}
  \crefname{question}{Question}{Questions}
  \crefname{section}{Section}{Sections}
  \crefname{figure}{Figure}{Figures}

\numberwithin{equation}{section}

\newtheorem{theorem}{Theorem}[section]
\newtheorem*{theorem*}{Theorem}
\newtheorem*{question*}{Question}
\newtheorem{lemma}[theorem]{Lemma}
\newtheorem{proposition}[theorem]{Proposition}

\newtheorem{question}[theorem]{Question}

\newtheorem{corollary}[theorem]{Corollary}

\theoremstyle{definition}

\newtheorem{remark}[theorem]{Remark}
\newtheorem{definition}[theorem]{Definition}

\DeclareMathOperator{\diam}{diam}

\DeclareMathOperator{\graph}{graph}

\DeclareMathOperator{\Var}{Var}

\DeclareMathOperator{\supp}{supp}

\newcommand{\N}{\mathbb{N}}

\newcommand{\R}{\mathbb{R}}
\newcommand{\E}{\mathbb{E}}
\renewcommand{\P}{\mathbb{P}}
\newcommand{\Z}{\mathbb{Z}}

\newcommand{\iA}{\mathcal{A}}
\newcommand{\iB}{\mathcal{B}}

\newcommand{\iD}{\mathcal{D}}
\newcommand{\iE}{\mathcal{E}}

\newcommand{\iH}{\mathcal{H}}
\newcommand{\iI}{\mathcal{I}}

\newcommand{\iL}{\mathcal{L}}
\newcommand{\iM}{\mathcal{M}}
\newcommand{\iR}{\mathcal{R}}
\newcommand{\iF}{\mathcal{F}}
\newcommand{\iP}{\mathcal{P}}
\newcommand{\iS}{\mathcal{S}}

\newcommand{\iT}{\mathcal{T}}

\newcommand{\iZ}{\mathcal{Z}}

\newcommand{\eps}{\varepsilon}

\overfullrule10pt

\begin{document}

\title{Restrictions of H\"older continuous functions}

\author{Omer Angel}

\address{Department of Mathematics, University of British Columbia, Vancouver, BC V6T 1Z2, Canada}
\email{angel@math.ubc.ca}
\thanks{O.A.\ was supported in part by NSERC. R.B.\ and A.M.\ were supported by the National Research, Development and Innovation Office-NKFIH, 104178. A.M.\ was also supported by the Leverhulme Trust.}

\author{Rich\'ard Balka}

\address{Department of Mathematics, University of British Columbia, and Pacific Institute for the Mathematical Sciences, Vancouver, BC V6T 1Z2, Canada}

\email{balka@math.ubc.ca}

\author{Andr\'as M\'ath\'e}
\address{Mathematics Institute,
University of Warwick, Coventry, CV4 7AL, United Kingdom}
\email{A.Mathe@warwick.ac.uk}

\author{Yuval Peres}
\address{Microsoft Research, 1 Microsoft Way, Redmond, WA 98052, USA}
\email{peres@microsoft.com}

\subjclass[2010]{26A16, 26A45, 28A78, 54E52, 60G17, 60G22, 60J65}

\keywords{fractional Brownian motion, H\"older continuous, restriction, bounded variation, Hausdorff dimension,
box dimension, Minkowski dimension, self-affine function, generic, typical, Baire category}

\begin{abstract}
For $0<\alpha<1$ let $V(\alpha)$ denote the supremum of the numbers $v$
such that every $\alpha$-H\"older continuous function is of bounded
variation on a set of Hausdorff dimension $v$. Kahane and Katznelson 
(2009) proved the estimate $1/2 \leq V(\alpha)\leq 1/(2-\alpha)$ and
asked whether the upper bound is sharp. We show that in fact
$V(\alpha)=\max\{1/2,\alpha\}$. Let $\dim_{\iH}$ and $\overline{\dim}_\iM$
denote the Hausdorff and upper Minkowski dimension, respectively. The
upper bound on $V(\alpha)$ is a consequence of the following theorem. Let
$\{B(t): t\in [0,1]\}$ be a fractional Brownian motion of Hurst index
$\alpha$. Then, almost surely, there exists no set $A\subset [0,1]$ such
that $\overline{\dim}_{\iM} A>\max\{1-\alpha,\alpha\}$ and $B\colon A\to \mathbb{R}$
is of bounded variation. Furthermore, almost surely, there exists no set
$A\subset [0,1]$ such that $\overline{\dim}_\iM A>1-\alpha$ and $B\colon
A\to \mathbb{R}$ is $\beta$-H\"older continuous for some $\beta>\alpha$. The
zero set and the set of record times of $B$ witness that the above
theorems give the optimal dimensions. We also prove similar restriction
theorems for deterministic self-affine functions and generic 
$\alpha$-H\"older continuous functions.

Finally, let $\{\mathbf{B}(t): t\in [0,1]\}$ be a two-dimensional Brownian motion. We prove that, almost surely, there is a compact set
$D\subset [0,1]$ such that $\dim_\iH D\geq 1/3$ and $\mathbf{B}\colon D\to \mathbb{R}^2$ is non-decreasing in each coordinate.
It remains open whether $1/3$ is best possible.
\end{abstract}

\maketitle

\section{Introduction}

Let $C^{\alpha}=C^{\alpha}[0,1]$ denote the set of
$\alpha$-H\"older continuous functions $f\colon [0,1]\to \R$.
In 2009, Kahane and Katznelson \cite{KK} proved the following result and asked whether it is sharp.

\begin{theorem*}[Kahane--Katznelson] For every $0<\alpha<1$ there exists a function $g_{\alpha}\in C^{\alpha}$ such that if $A\subset [0,1]$ and $g_{\alpha}|_{A}$ is of bounded variation, then the Hausdorff dimension satisfies $\dim_\iH A\leq 1/(2-\alpha)$.
\end{theorem*}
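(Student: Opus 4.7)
My approach is to construct $g_\alpha$ explicitly as a multi-scale superposition of triangular bumps. Pick a lacunary sequence of integers $N_1 < N_2 < \cdots$ and let $w_k = 1/N_k$. At each scale $k$, partition $[0,1]$ into $N_k$ subintervals of length $w_k$, and on each subinterval place a signed triangular bump of height $h_k = c\, w_k^\alpha$. Summing over scales and subintervals gives
\[
g_\alpha(x) = \sum_{k=1}^\infty \sum_{j=1}^{N_k} \varepsilon_{k,j}\, h_k\, T_{k,j}(x),
\]
where $T_{k,j}$ is the standard tent function on the $j$-th scale-$k$ subinterval and $\varepsilon_{k,j}\in\{\pm 1\}$ is a sign (alternating in $j$, or i.i.d.\ random).

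With $h_k \asymp w_k^\alpha$ and sufficient lacunarity in $\{N_k\}$, a standard telescoping estimate gives $g_\alpha \in C^\alpha[0,1]$: for $|x-y|$ of order $w_n$, contributions from scales $k\le n$ are bounded by $N_k^{1-\alpha}|x-y|$ and sum geometrically to $O(|x-y|^\alpha)$, while contributions from scales $k>n$ are bounded by $h_k$ and likewise sum to $O(w_n^\alpha) = O(|x-y|^\alpha)$.

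To bound restrictions with bounded variation, suppose $A\subset[0,1]$ has $\dim_\iH A > s := 1/(2-\alpha)$. Fix $s' \in (s, \dim_\iH A)$ and a Frostman probability measure $\mu$ supported on $A$ with $\mu(I) \le C|I|^{s'}$ for every interval $I$. At each scale $k$, the measure $\mu$ guarantees that $A$ meets at least $\gtrsim N_k^{s'}$ of the $N_k$ scale-$k$ subintervals; selecting one point $x_{k,i}\in A$ from each such subinterval yields a chain along which we compute the partial variation $V_k = \sum_i |g_\alpha(x_{k,i+1}) - g_\alpha(x_{k,i})|$. After decomposing each increment by scale and carefully accounting for cross-scale cancellations, one should obtain $V_k \to \infty$ as $k \to \infty$, contradicting the bounded variation of $g_\alpha|_A$. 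Hence $\dim_\iH A \le 1/(2-\alpha)$.

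The main obstacle is the variation lower bound: one must control the cancellations across scales so that a nontrivial fraction of the scale-$k$ amplitude $h_k$ survives in each increment $g_\alpha(x_{k,i+1})-g_\alpha(x_{k,i})$. This calls either for a second-moment estimate over the random signs $\varepsilon_{k,j}$ (Paley-Zygmund type), or for a deterministic argument exploiting the approximate orthogonality of the $T_{k,j}$ across scales, combined with an energy bound on $\mu$. The critical exponent $1/(2-\alpha)$ should emerge from balancing the Frostman mass $\sim N_k^{s'}$ at scale $k$ against the degree of cross-scale cancellation, and getting this exponent right is the main technical content of the proof.
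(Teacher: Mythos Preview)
This theorem is not proved in the paper; it is quoted from Kahane--Katznelson as the starting point, and the paper's purpose is to \emph{sharpen} the bound to $\max\{1/2,\alpha\}$. The method there is quite different from yours: one constructs a self-affine function $f_\alpha$ (or uses fractional Brownian motion), verifies that it satisfies a uniform scaled local time estimate $f_\alpha\in\iA_n(\alpha,M)$ (Definition~\ref{d:An}), and then proves a deterministic restriction theorem (Theorem~\ref{t:variation}) showing that any function with this local-time property cannot have bounded variation on a set of upper Minkowski dimension exceeding $\max\{1-\alpha,\alpha\}$. There are no Frostman measures and no cross-scale cancellation analysis; the argument is a direct combinatorial bound on how many level-$n$ time intervals the set $A$ can occupy, obtained by extracting oscillations of $f$ on $A$ and comparing against the assumed variation bound.

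Your outline is closer in spirit to a lacunary-series construction of the original Kahane--Katznelson type, but the crux --- the variation lower bound --- is missing, and it is not a detail that can be deferred. Your heuristic ``$A$ meets $\gtrsim N_k^{s'}$ scale-$k$ intervals, each contributing amplitude $\sim h_k$'' is wrong as stated: the points of $A$ need not sit near extrema of the tents, and consecutive increments can cancel. If the heuristic were valid it would give $V_k\gtrsim N_k^{s'-\alpha}\to\infty$ whenever $s'>\alpha$, hence $\dim_\iH A\le\alpha$ for every bounded-variation restriction of $g_\alpha$. But by Theorem~\ref{t:KKM} every continuous function --- your $g_\alpha$ included --- has bounded variation on some set of Hausdorff dimension $1/2$, so for $\alpha<1/2$ the heuristic yields a false conclusion. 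The exponent $1/(2-\alpha)$ therefore has to come from an honest quantification of how much scale-$k$ amplitude survives after cancellation, and until you actually carry out the second-moment or orthogonality computation and see that specific exponent emerge, you do not have a proof.
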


\begin{question*}[Kahane--Katznelson] Is the above result the best possible?
 \end{question*}

We answer this question negatively and determine the optimal bound. Let
\[
V(\alpha)=\inf_{f\in C^{\alpha}} \sup_{A\subset [0,1]}\{\dim_\iH A:
f|_{A} \textrm{ is of bounded variation}\},
\]
so that the above theorem states $V(\alpha) \leq 1/(2-\alpha)$, see Figure~\ref{f:V}.

\begin{theorem}\label{t:Va}
For all $0<\alpha<1$ we have
\[
  V(\alpha) = \max\left\{1/2, \alpha\right\}.
\]
\end{theorem}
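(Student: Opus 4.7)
The plan is to establish the matching inequalities $V(\alpha) \le \max\{1/2,\alpha\}$ and $V(\alpha) \ge \max\{1/2,\alpha\}$ separately, treating each through the appropriate case division at $\alpha = 1/2$.

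For the upper bound I would exploit the fractional Brownian motion restriction theorem announced in the abstract. When $\alpha \ge 1/2$, fix any $\alpha' \in (\alpha,1)$ and observe that the FBM $B^{\alpha'}$ of Hurst index $\alpha'$ is almost surely $\alpha$-H\"older continuous; by the restriction theorem, a.s.\ no set $A$ with $\overline{\dim}_\iM A > \max\{1-\alpha',\alpha'\} = \alpha'$ admits $B^{\alpha'}|_A$ of bounded variation. Since $\dim_\iH\le \overline{\dim}_\iM$, this yields $V(\alpha) \le \alpha'$; letting $\alpha' \downarrow \alpha$ gives $V(\alpha) \le \alpha$. When $\alpha < 1/2$, standard Brownian motion is a.s.\ $\alpha$-H\"older continuous, so the same theorem applied with Hurst index $1/2$ yields $V(\alpha) \le 1/2$.

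For the lower bound the estimate $V(\alpha) \ge 1/2$ is already Kahane--Katznelson's, so only the new inequality $V(\alpha) \ge \alpha$, non-trivial when $\alpha > 1/2$, needs to be proved. Given any $f \in C^\alpha$ with H\"older constant $C_f$, I would construct an $f$-independent Cantor set $A \subset [0,1]$ of Hausdorff dimension $\alpha$ on which $f|_A$ is of bounded variation. Set $N_n = 2^n$ and $\ell_n = 2^{-n/\alpha}n^{-2/\alpha}$ and build a nested family of closed intervals $\{I_i^n\}_{i=1}^{2^n}$ of length $\ell_n$, with each level-$(n-1)$ interval containing its two level-$n$ children placed at the endpoints. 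This is consistent because $\ell_n/\ell_{n-1} \le 2^{-1/\alpha} < 1/2$ for $\alpha \in (0,1)$. A standard mass-distribution argument applied to the uniform probability measure on $A = \bigcap_n \bigcup_i I_i^n$ (which assigns $2^{-n}$ to each level-$n$ interval) combined with the level-$n$ cover shows $\dim_\iH A = \alpha$.

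To verify bounded variation, take any increasing sequence $a_1 < \cdots < a_k$ in $A$ and let $n_j$ denote the depth of the deepest common ancestor interval of $a_j$ and $a_{j+1}$; then $|f(a_{j+1}) - f(a_j)| \le C_f \ell_{n_j}^\alpha$. Since each level-$n$ interval has exactly two children, at most $2^n$ indices $j$ can satisfy $n_j = n$, hence
\[
\sum_{j=1}^{k-1} |f(a_{j+1}) - f(a_j)| \le C_f \sum_n 2^n \ell_n^\alpha = C_f \sum_n n^{-2} < \infty.
\]
The main obstacle is calibrating $\ell_n$: the naive choice $\ell_n = 2^{-n/\alpha}$ makes $\sum_n 2^n\ell_n^\alpha$ diverge, whereas any faster decay shrinks the Hausdorff dimension below $\alpha$; the logarithmic correction $n^{-2/\alpha}$ buys summability without lowering the target dimension. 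The combinatorial count of level-$n$ separations is what converts per-scale oscillation bounds into a telescoping estimate for the total variation, and it is this interplay between the dimension arithmetic and the variation arithmetic that carries the whole argument.
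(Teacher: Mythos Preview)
Your argument is correct and follows essentially the paper's route. For the upper bound the paper's main exposition uses the self-affine functions of Theorem~\ref{t:sa} rather than fractional Brownian motion, but it explicitly notes that Theorem~\ref{t:fBm} may be substituted, and your limiting argument $\alpha'\downarrow\alpha$ (respectively, Hurst index $1/2$ when $\alpha<1/2$) is exactly the approximation step carried out in the proof of Theorem~\ref{t:answer}. For the lower bound $V(\alpha)\ge\alpha$ the paper simply cites Theorem~\ref{t:A} (from \cite{BP}) with $\beta=1$; your explicit Cantor construction with $\ell_n=2^{-n/\alpha}n^{-2/\alpha}$ and the ancestor-depth variation count is precisely a direct proof of that special case, so the two arguments coincide in substance.
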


 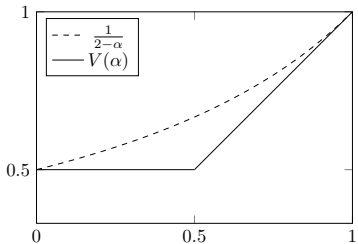
\begin{figure}
	\begin{tikzpicture}[scale=.7]
	\begin{axis}[xtick={0,0.5,1}, ytick={0.5,1},xmin=0, xmax=1,
	ymin=0.33, ymax=1, x=6cm, y=6cm, legend pos=north west]
	\addplot[domain=0:1, dashed]{1/(2-x)};
	\addplot[domain=0:0.5]{0.5};
	\addplot[domain=0.5:1]{x};
	\legend{$\frac{1}{2-\alpha}$,$V(\alpha)$};
	\end{axis}
	\draw (6.7,0);
	\end{tikzpicture}
 \caption{The Kahane--Katznelson bound $V(\alpha)\leq 1/(2-\alpha)$ compared to the actual value $V(\alpha)=\max\{1/2,\alpha\}$.}
 \label{f:V}
 \end{figure}

\begin{figure}
\centering
\includegraphics[width=0.9\textwidth]{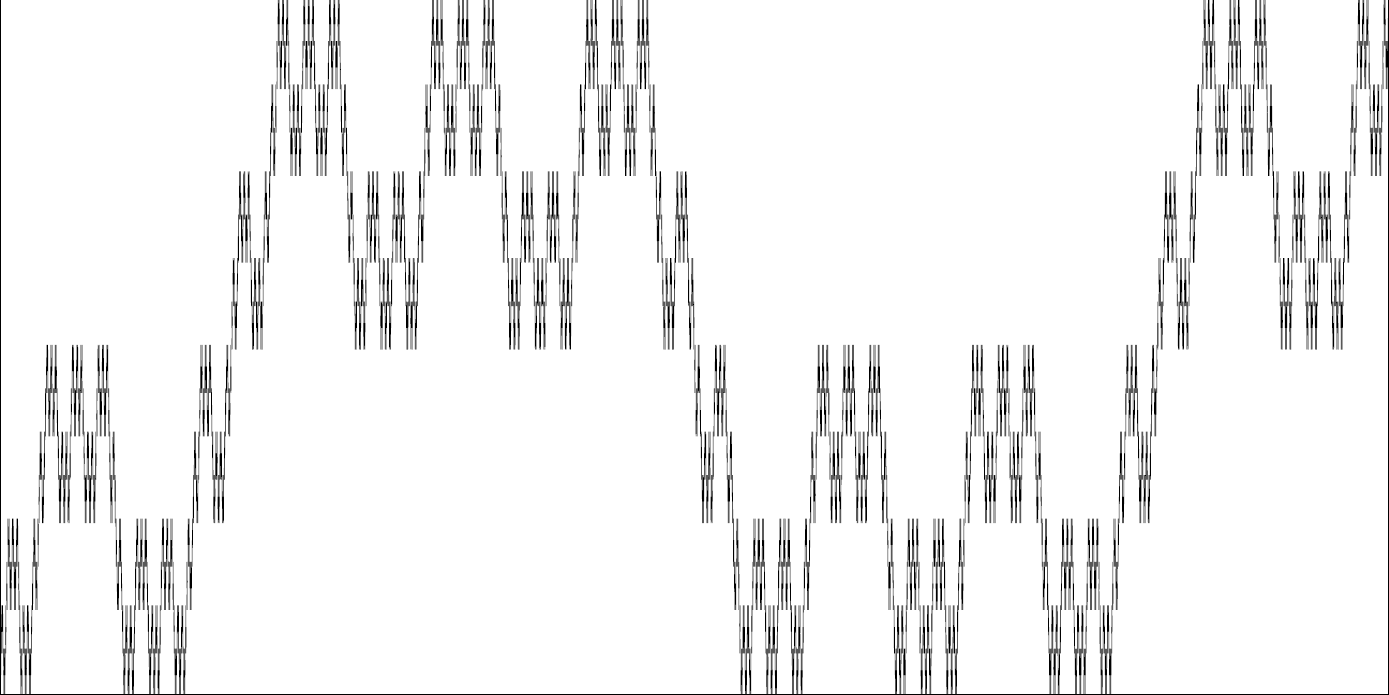}
\caption{A self-affine function $f_\alpha\colon [0,1]\to [0,1]$ with H\"older exponent
\mbox{$\alpha=\log 2/\log 6$}. Its graph consists of 6 affine copies of itself.}
\label{f:23}
\end{figure}

Kahane and Katznelson also asked about dimensions of sets $A$ such that
the restriction to $A$ is H\"older continuous. (See the next section
for related results.) We present two constructions, one deterministic
and one stochastic, of functions that are not H\"older on any set of
high enough dimension. First we consider self-affine functions. These
are constructed in Definition~\ref{d:sa} below, see Figure~\ref{f:23} for
illustration.

\begin{theorem} \label{t:sa0} There is a dense set $\Delta \subset (0,1)$ with the following property. For
each $\alpha\in \Delta$ there is a self-affine function $f_\alpha \in C^{\alpha}$
such that for all $A\subset [0,1]$
\begin{enumerate}
\item if $f_{\alpha}|_A$ is $\beta$-H\"older continuous for some $\beta>\alpha$, then $\overline{\dim}_\iM A\leq 1-\alpha$;
\item if $f_{\alpha}|_A$ is of bounded variation, then $\overline{\dim}_\iM A\leq \max\{1-\alpha,\alpha\}$.
\end{enumerate}
\end{theorem}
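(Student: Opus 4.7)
My plan is to construct $f_\alpha$ as a piecewise-affine self-similar graph built from $N$ affine copies of itself, each of horizontal width $1/N$ and vertical amplitude $1/M$, where $\alpha=\log M/\log N$. Taking $M=2^p$ and $N=2^q$ with integers $p<q$ shows that $\Delta$ contains $\Q\cap(0,1)$ and is therefore dense. Each of the $N$ pieces is specified by a sign $\eps_i\in\{\pm1\}$ and integer heights $h_i\in\{0,\ldots,M-1\}$ with $h_{i+1}=h_i+\eps_i$ to ensure continuity, and I also impose the \emph{balance} condition that each of the $M$ vertical slots of $f_\alpha([0,1])$ is visited by exactly $N/M$ pieces. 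This is straightforward to arrange when $M\mid N$ by choosing a suitable lattice path. The $\alpha$-H\"older property then follows by induction from self-similarity: on every level-$n$ $N$-adic interval $I$, the function $f_\alpha|_I$ is an affine copy of $f_\alpha$ of vertical amplitude $M^{-n}=|I|^\alpha$.

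For part (1), let $A\subset[0,1]$ satisfy that $f_\alpha|_A$ is $\beta$-H\"older for some $\beta>\alpha$, and write $|A_n|$ for the number of level-$n$ $N$-adic intervals meeting $A$. The core one-step estimate is this: for each $I\in A_n$ the H\"older hypothesis gives $\diam(f_\alpha(A\cap I))\leq C|I|^\beta=CN^{-n\beta}$, and choosing $n=\lceil m\alpha/\beta\rceil$ makes this vertical strip fit into $O(1)$ of the level-$m$ vertical slots inside $f_\alpha(I)$. By balance each such slot contains exactly $(N/M)^{m-n}=N^{(m-n)(1-\alpha)}$ of the level-$m$ sub-cells of $I$, so
\[
 |A_m|\leq C|A_n|\,N^{(m-n)(1-\alpha)}.
\]
If $|A_n|\leq N^{n\gamma}$ for all sufficiently large $n$, this inequality yields $|A_m|\leq N^{m\,g(\gamma)}$ for all sufficiently large $m$, where $g(\gamma)=(\alpha/\beta)\gamma+(1-\alpha/\beta)(1-\alpha)$ is a strict contraction with unique fixed point $1-\alpha$. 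Iterating $g$ from the trivial $\gamma_0=1$ and noting that the improved bound propagates to every large enough scale at each step delivers $\overline{\dim}_\iM A\leq 1-\alpha$.

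For part (2), the key input is the elementary estimate $\Var(f_\alpha|_A)\geq\sum_{I\in A_n}\omega_I$ with $\omega_I:=\mathrm{osc}(f_\alpha|_{A\cap I})$, which follows by selecting in each $I$ two points realizing the oscillation and observing that, after sorting all chosen points by position, the telescoping variation makes all cross-interval contributions nonnegative. For each $I\in A_n$, let $a_I$ be the number of level-$(n+1)$ sub-cells of $I$ meeting $A$. By balance the $N$ sub-cells lie in $M$ vertical slots with $N/M$ per slot, so either $a_I<N/M$, yielding the trivial $\omega_I\geq M^{-(n+1)}$, or the $a_I$ cells are spread over at least $a_I M/N$ slots, yielding $\omega_I\geq c\,a_I M^{-n}/N$. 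Summing both cases against the variation bound gives $|A_{n+1}|\leq C\Var(f_\alpha|_A)\,N^{n\alpha+1}$, hence $\overline{\dim}_\iM A\leq\alpha\leq\max\{\alpha,1-\alpha\}$. The main obstacle I foresee is technical: the iteration in part (1) only improves the exponent at scales of order $(\beta/\alpha)^j$-fold the current threshold after $j$ steps, but since the upper Minkowski dimension is a $\limsup$ and $\gamma_j\to 1-\alpha$, this is enough. A second delicate point is verifying that the balanced pattern truly exists for every $(M,N)$ with $M\mid N$ while respecting the continuity recursion on $h_i$; this is combinatorial rather than deep, and choosing $M=2^p$, $N=2^q$ already suffices for the density of $\Delta$.
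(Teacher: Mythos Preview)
Your construction and Part~(1) are essentially correct and closely parallel the paper's approach: the ``balance'' condition is exactly the scaled local time estimate of Definition~\ref{d:An}, and your iteration of the one-step bound $|A_m|\le C|A_n|\,N^{(m-n)(1-\alpha)}$ along a geometric sequence of scales is the content of the paper's Theorem~\ref{t:Holder}.

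Part~(2), however, contains a genuine error. Your dichotomy is wrong in both branches. If $a_I\le N/M$ there is no lower bound on $\omega_I$ whatsoever (certainly not $\omega_I\ge M^{-(n+1)}$), and if $a_I=N/M$ the cells can still all lie in a single vertical slot, so the bound $\omega_I\ge c\,a_I M^{-n}/N$ fails too. The decisive counterexample is any level set $A=\{x:f_\alpha(x)=c\}$: by balance every $I\in A_n$ has $a_I=N/M$ with all sub-cells in one slot, so $\omega_I=0$ and $\Var(f_\alpha|_A)=0$, yet $|A_n|=(N/M)^n=N^{n(1-\alpha)}$. Your conclusion $|A_{n+1}|\le C\Var(f_\alpha|_A)\,N^{n\alpha+1}$ would then force $A$ to be finite, and your claimed bound $\overline{\dim}_\iM A\le\alpha$ is simply false for $\alpha<1/2$.

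The repair is immediate once you see what went wrong: split instead into $a_I\le 2N/M$ (these intervals contribute at most $2|A_n|\,N/M=2|A_n|\,N^{1-\alpha}$ to $|A_{n+1}|$) versus $a_I>2N/M$ (then at least three slots are occupied and your oscillation bound $\omega_I\ge c\,a_I M^{-n}/N$ is valid). This gives
\[
|A_{n+1}|\le 2|A_n|\,N^{1-\alpha}+C\Var(f_\alpha|_A)\,N^{n\alpha+1},
\]
and iterating now correctly yields $\overline{\dim}_\iM A\le\max\{1-\alpha,\alpha\}$, the first term in the recursion being precisely what produces the $1-\alpha$. With this fix your single-step argument is a pleasant simplification of the paper's multi-scale proof (Theorem~\ref{t:variation}) in the bounded-variation case; the paper's version is more elaborate because it handles general $\beta$-variation and extracts an explicit error term.
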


For a stochastically self-affine process, fractional Brownian motion
(see Definition~\ref{d:fBm}), we prove the following.

\begin{theorem} \label{t:fBm0} Let $0<\alpha<1$ and let $\{B(t): t\in [0,1]\}$ be a fractional Brownian motion of Hurst index $\alpha$.
Then, almost surely, for all $A\subset [0,1]$
\begin{enumerate}
\item if $B|_{A}$ is $\beta$-H\"older continuous for some $\beta>\alpha$, then $\overline{\dim}_\iM A\leq 1-\alpha$;
\item if $B|_{A}$ is of bounded variation, then $\overline{\dim}_\iM A\leq \max\{1-\alpha, \alpha\}$.
\end{enumerate}
\end{theorem}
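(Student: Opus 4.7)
My plan is to prove both parts by a dyadic covering argument that exploits the scaling self-similarity of fBm together with Gaussian density estimates for hitting small intervals. I focus on part (1); part (2) follows by a variant that splits the dyadic intervals meeting $A$ according to the local oscillation of $B|_A$.

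After a countable union over rational $\beta \in (\alpha, 1)$ and integer H\"older constants $K$, it suffices to show that almost surely every $A \subset [0,1]$ on which $B$ is $\beta$-H\"older with constant $K$ satisfies $\overline{\dim}_\iM A \leq 1-\alpha$. The essential observation is that if $A$ meets the dyadic interval $I_k^n = [k 2^{-n}, (k+1)2^{-n}]$, then $B(A \cap I_k^n)$ lies in an interval $J_k \subset \R$ of length at most $K \cdot 2^{-n\beta}$, so $A \cap I_k^n \subset I_k^n \cap B^{-1}(J_k)$. Writing $N_m(A)$ for the number of dyadic intervals of length $2^{-m}$ meeting $A$, this gives the two-scale inequality
\[
 N_m(A) \leq N_n(A) \cdot \max_{k,\,J}\#\bigl\{j : I_j^m \subset I_k^n,\; I_j^m \cap B^{-1}(J) \neq \emptyset\bigr\},
\]
where $k$ ranges over indices with $I_k^n \cap A \neq \emptyset$ and $J$ over intervals of length $\leq K 2^{-n\beta}$. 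The heart of the argument is a uniform level-set covering estimate: for every $\varepsilon > 0$, almost surely, for all large $m$, all $n \leq m$, all $k$, and all $J \subset \R$ with $|J| \leq 2^{-m\alpha}$, the inner count above is at most $2^{(m-n)(1-\alpha)+\varepsilon m}$. By applying the scaling property of fBm on $I_k^n$, this reduces to bounding the number of dyadic $2^{-(m-n)}$-intervals in $[0,1]$ meeting $B^{-1}([v, v+\delta])$ for arbitrary $v \in \R$ and $\delta \leq 2^{-(m-n)\alpha}$. A direct Gaussian density computation bounds this expectation by $O\bigl((m-n) \cdot 2^{(m-n)(1-\alpha)}\bigr)$, and the concentration needed to absorb a union bound over dyadic scales, positions, and a $\delta$-net of values $v$ comes from higher-moment estimates using the uniformly bounded joint Gaussian densities of fBm.

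Choosing $n = \lceil \alpha m / \beta \rceil$ in the two-scale inequality, so that $K 2^{-n\beta} \leq 2^{-m\alpha}$ eventually, and passing to $D := \overline{\dim}_\iM A$ via the limsup over $m$ yields the self-bounding inequality
\[
 D \leq \frac{\alpha}{\beta} D + \Bigl(1 - \frac{\alpha}{\beta}\Bigr)(1 - \alpha) + \varepsilon.
\]
Since $\beta > \alpha$, the factor $1 - \alpha/\beta$ is positive, so this rearranges to $D \leq 1 - \alpha + \varepsilon/(1 - \alpha/\beta)$, and letting $\varepsilon \to 0$ gives $D \leq 1-\alpha$. For part (2), if $B|_A$ has total variation at most $V$, I decompose the set of indices $k$ with $I_k^n \cap A \neq \emptyset$ according to whether the local oscillation $\omega_k$ of $B|_A$ on $I_k^n$ exceeds a threshold $\eps$: the number of $k$ with $\omega_k \geq \eps$ is at most $V/\eps$, while the remaining indices satisfy $\omega_k \leq \eps$ and feed into the same level-set lemma with $K 2^{-n\beta}$ replaced by $\eps$; optimizing $\eps$ (and invoking a pointwise lower modulus of continuity of fBm when $\alpha > 1/2$) produces the bound $\max\{1-\alpha, \alpha\}$.

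The main obstacle is the uniform level-set covering estimate: the window $J_k$ depends on the sample path $B$, so hitting counts must be controlled simultaneously across all admissible $J$, which requires concentration substantially stronger than Markov's inequality allows. The long-range dependence of fBm for $\alpha \neq 1/2$ also makes the higher-moment computations more delicate than in the Brownian case, though Gaussian concentration and the uniformly bounded finite-dimensional densities of fBm keep the estimates within reach.
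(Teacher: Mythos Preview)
Your central probabilistic ingredient --- the uniform level-set covering estimate --- is exactly the paper's scaled local time condition $B\in\iA_m(\alpha,2)$ for all large $m$ (Definition~\ref{d:An}, Proposition~\ref{p:key}). The paper proves it not via higher moments but via Pitt's strong local nondeterminism to bound conditional densities (Lemma~\ref{l:sln}), then a stopping-time decomposition to convert the first-moment bound into exponential tails (Lemma~\ref{l:Snmpq}), which survives the $2^{O(m)}$ union bound. Your ``higher-moment'' route is plausible (iterating the conditional density bound over ordered $k$-tuples does give $\E X^k\le k!\,(C\E X)^k$, hence sub-exponential tails), but you would need to carry this out; it is where the real work lies.

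For part~(1), your two-scale inequality with $n=\lceil \alpha m/\beta\rceil$ and the self-bounding limsup argument is a clean one-step variant of the paper's iterated scheme (Theorem~\ref{t:Holder}); the two are equivalent at the level of the dimension bound.

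For part~(2) your oscillation-threshold decomposition is genuinely different from the paper's route. The paper (Theorem~\ref{t:variation}) uses the scaled local time estimate \emph{inside} each level-$s$ interval to show that many level-$t$ subintervals meeting $A$ force many value-level crossings of width $2^{-\alpha t}$, hence a quantitative lower bound on $V^\beta(f|_{A\cap I})$; summing against $V^\beta(f|_A)\le 1$ gives a recursion that iterates to $\max\{1-\alpha,\alpha\beta\}$. Your approach instead counts ``large-oscillation'' level-$n$ intervals by $V/\eps$ and feeds the rest into the level-set lemma. This does work: taking $\eps=2^{-m\alpha}$ and $n=cm$, one gets $D\le\max\{1+\alpha-c,\;cD+(1-c)(1-\alpha)\}+\varepsilon'$, and optimizing over $c\in(0,1)$ yields $D(D-1)\le\alpha(\alpha-1)+\varepsilon'$, hence $D\le\max\{1-\alpha,\alpha\}$ after $\varepsilon'\to0$. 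Two comments: (i) the optimization is over the scale ratio $c$, not just $\eps$, and your write-up should make the two-parameter balance explicit; (ii) the ``pointwise lower modulus of continuity of fBm for $\alpha>1/2$'' you invoke is neither standard nor needed --- the oscillation/level-set balance already handles both regimes. Your argument is arguably simpler than the paper's for $\beta=1$, though the paper's version extends directly to general $\beta$-variation.
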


\begin{corollary} \label{c:fBm}
Let $0<\alpha<1$ and let $\{B(t): t\in [0,1]\}$ be a
fractional Brownian motion of Hurst index $\alpha$. Then
\[\mathbb{P}(\exists A: \overline{\dim}_\iM A>\max\{1-\alpha,\alpha\} \textrm{ and } B|_{A} \textrm{ is non-decreasing})=0.\]
\end{corollary}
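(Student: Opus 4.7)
The plan is to derive this corollary directly from part (2) of \cref{t:fBm0} by observing that every non-decreasing real-valued function on a subset of $[0,1]$ is automatically of bounded variation. Thus no new probabilistic estimate will be required; only a short monotone-variation argument together with a single appeal to \cref{t:fBm0}.

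More precisely, I would first fix a realization of $B$ in the almost-sure event given by \cref{t:fBm0}. On this event, for every $A\subset [0,1]$ such that $B|_A$ is of bounded variation, we have $\overline{\dim}_\iM A\leq \max\{1-\alpha,\alpha\}$. I would then argue that if $A\subset [0,1]$ is any set on which $B$ is non-decreasing, then $B|_A$ is of bounded variation. Indeed, for any finite increasing sequence $t_1<t_2<\dots<t_n$ in $A$ monotonicity gives
\[
\sum_{i=1}^{n-1}|B(t_{i+1})-B(t_i)|=\sum_{i=1}^{n-1}\bigl(B(t_{i+1})-B(t_i)\bigr)=B(t_n)-B(t_1)\leq \sup_{[0,1]}B-\inf_{[0,1]}B,
\]
and the right-hand side is finite by continuity of $B$. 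Taking the supremum over all such partitions of $A$ shows that the total variation of $B|_A$ is finite.

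Combining these two observations, on the above almost-sure event every set $A\subset [0,1]$ on which $B$ is non-decreasing must satisfy $\overline{\dim}_\iM A\leq \max\{1-\alpha,\alpha\}$, which is exactly the claim of the corollary. There is no real obstacle here; the only minor subtlety is to notice that monotonicity on a subset $A$ (not an interval) still forces bounded variation, which follows from the telescoping identity above and the boundedness of $B$ on the compact interval $[0,1]$. Thus the corollary is an immediate consequence of \cref{t:fBm0}(2).
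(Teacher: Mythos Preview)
Your proposal is correct and matches the paper's intent: the corollary is stated without a separate proof precisely because a non-decreasing restriction is trivially of bounded variation, so \cref{t:fBm0}(2) applies directly. Your telescoping argument makes this explicit and is exactly the right justification.
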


Let $\iZ$ be the zero set of $B$ and let 
$\iR=\{t\in [0,1]: B(t)=\max_{s\in [0,t]} B(s)\}$ be the set of record times of $B$. It is classical that,
almost surely, $\dim_\iH \iZ=1-\alpha$, see \cite[Chapter~18]{Ka}. For the
record, let us state the following, more subtle fact.

\begin{proposition}\label{p:R}
Almost surely, $\dim_\iH \iR =\overline{\dim}_\iM \iR =\alpha$.
\end{proposition}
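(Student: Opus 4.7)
The plan is to prove $\dim_\iH \iR \geq \alpha$ and $\overline{\dim}_\iM \iR \leq \alpha$ separately; since $\dim_\iH \iR \leq \overline{\dim}_\iM \iR$, this yields the claim. Write $M(t) = \max_{s \in [0,t]} B(s)$; note that $M(1) > 0$ almost surely.

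For the lower bound, observe that for each $y \in [0, M(1)]$ the first-passage time $\tau(y) = \inf\{t : B(t) \geq y\}$ satisfies $B(\tau(y)) = y$ and $\tau(y) \in \iR$, so $B(\iR) \supseteq [0, M(1)]$. Since $B$ is a.s.\ $\alpha$-H\"older continuous and any $\alpha$-H\"older map can increase Hausdorff dimension by a factor of at most $1/\alpha$, we obtain $1 = \dim_\iH [0, M(1)] \leq \dim_\iH \iR / \alpha$, hence $\dim_\iH \iR \geq \alpha$.

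For the upper bound I would split on $\alpha$. If $\alpha \geq 1/2$, then $B|_\iR = M|_\iR$ is non-decreasing, hence of bounded variation, and \cref{t:fBm0}(2) immediately gives $\overline{\dim}_\iM \iR \leq \max\{1-\alpha,\alpha\} = \alpha$. For $\alpha < 1/2$ a direct covering argument is needed: partition $[0,1]$ into intervals $I_k = [k\delta,(k+1)\delta]$ and observe that $I_k \cap \iR \neq \emptyset$ precisely when the maximum of $B$ on $[0,(k+1)\delta]$ is attained in $I_k$; by self-similarity this probability equals $\P(\tau^* \in [k/(k+1), 1])$, where $\tau^*$ is the argmax of $B$ on $[0,1]$. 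Invoking the classical persistence estimate $\P(\tau^* > 1-\eta) \leq C\eta^{1-\alpha}$ for fBm (going back to Molchan's work on its maximum), one obtains $\E[N(\iR,\delta)] \leq C \sum_{k=1}^{\lfloor 1/\delta \rfloor} k^{-(1-\alpha)} = O(\delta^{-\alpha})$. Markov's inequality along $\delta_n = 2^{-n}$ combined with Borel--Cantelli then yields $\overline{\dim}_\iM \iR \leq \alpha + \eps$ a.s.\ for every $\eps > 0$.

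The essential obstacle lies in the regime $\alpha < 1/2$, where \cref{t:fBm0}(2) yields only the weaker bound $1-\alpha$ and one must genuinely exploit the Gaussian (specifically, anti-persistent) structure of fBm; the case $\alpha \geq 1/2$, by contrast, is essentially a one-line corollary of the paper's main restriction theorem.
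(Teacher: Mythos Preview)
Your proposal is essentially correct and, for the lower bound and for the regime $\alpha<1/2$, follows the same line as the paper. Two minor technical slips: fractional Brownian motion is a.s.\ $\gamma$-H\"older for every $\gamma<\alpha$ but \emph{not} $\alpha$-H\"older, so the lower bound should read $\dim_\iH \iR \geq \gamma$ for each $\gamma<\alpha$ and then let $\gamma\nearrow\alpha$ (this is exactly what the paper does); and Molchan's result, as cited in the paper, gives only $\P(\tau^*<x)=x^{1-\alpha+o(1)}$ rather than $\leq Cx^{1-\alpha}$, which still suffices after introducing an arbitrary $\delta>0$ in the exponent.

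The one genuine difference is your case split. The paper does \emph{not} distinguish $\alpha\geq 1/2$ from $\alpha<1/2$: it runs the Molchan/time-reversal/first-moment/Borel--Cantelli argument uniformly for all $\alpha\in(0,1)$, obtaining $\P(\iR\cap[t-\eps,t]\neq\emptyset)=(\eps/t)^{1-\alpha+o(1)}$ and then $\E N(m)=O(m^{\alpha+\delta})$. Your observation that for $\alpha\geq 1/2$ the upper bound is a one-line consequence of \cref{t:fBm0}(2) is correct and not circular (the proof of \cref{t:fBm0} nowhere uses \cref{p:R}), and it is a nice shortcut the paper does not exploit. That said, since the Molchan argument is needed anyway for $\alpha<1/2$ and works equally well for $\alpha\geq 1/2$, the split does not ultimately shorten the proof.
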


We could not find a reference for this in the literature, and include a
proof in Section~\ref{s:fBm}. Clearly $\iZ$ and $\iR$ witness that Theorem~\ref{t:fBm0} and
Corollary~\ref{c:fBm} are best possible.

Simon~\cite{S} proved that a standard linear Brownian motion is not monotone on any set of positive Lebesgue measure.
Theorem~\ref{t:fBm0} for $\alpha=1/2$ with Hausdorff dimension in place of upper Minkowski dimension is due to Balka and Peres~\cite{BP}.
The methods used there do not extend to Minkowski dimension or to general exponents $\alpha$.
Related results in the discrete setting, concerning non-decreasing subsequences of random walks, can be found in \cite{ABP}.

Now we consider higher dimensional Brownian motion.

\begin{definition} Let $d\geq 2$ and $f\colon [0,1]\to \R^d$. We say that $f$ is \emph{non-decreasing}
on a set $A\subset [0,1]$ if all the coordinate functions of $f|_{A}$ are non-decreasing.
\end{definition}

\begin{theorem} \label{t:2DBM} Let $\{B(t): t\in [0,1]\}$ be a standard two-dimensional Brownian motion. Then, almost surely,
there exists a compact set $D\subset [0,1]$ such that $B$ is non-decreasing on $D$ and $\dim_\iH D\geq 1/3$.
\end{theorem}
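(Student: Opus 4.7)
The plan is to construct $D$ as a nested intersection of random subsets coming from a multiscale hierarchy, and to bound $\dim_\iH D$ from below via a mass distribution argument. Fix a sufficiently large integer $N$. At each level $n$ I aim to retain a family $\iS_n$ of $N$-adic subintervals of $[0,1]$ of length $N^{-n}$, nested so that $\iS_{n+1}$ refines $\iS_n$, satisfying two conditions: enumerating $\iS_n$ left to right as $I_j^n=[a_j^n,b_j^n]$, the \emph{chain condition}
\[
B(a_1^n)\leq B(b_1^n)\leq B(a_2^n)\leq B(b_2^n)\leq\cdots\leq B(b_{|\iS_n|}^n)
\]
holds componentwise, and $|\iS_n|\geq c\,N^{n/3}$ for some positive constant $c$. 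Given such a family, $D:=\bigcap_n\bigcup_{I\in\iS_n}I$ is compact, and continuity of $B$ propagates the chain condition to $D$: for $s<t$ in $D$, choose $n$ large so that $s\in I_j^n$, $t\in I_k^n$ with $j<k$, and take $n\to\infty$ in the chain inequality $B(b_j^n)\leq B(a_k^n)$ to conclude $B(s)\leq B(t)$. Distributing mass $1/|\iS_n|$ uniformly over each interval of $\iS_n$ and passing to the weak limit defines a random probability measure $\mu$ on $D$; the growth condition gives $\mu(J)\leq C\,r^{1/3}$ for every interval $J\subset[0,1]$ of length $r$, and Frostman's lemma then yields $\dim_\iH D\geq 1/3$.

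The refinement $\iS_n\to\iS_{n+1}$ is carried out independently inside each parent. Inside $I=[a,b]\in\iS_n$ one selects children $[a_1',b_1'],\ldots,[a_L',b_L']$ subject to the \emph{strengthened local chain}
\[
B(a)\leq B(a_1')\leq B(b_1')\leq\cdots\leq B(b_L')\leq B(b),
\]
whose anchoring at $B(a)$ and $B(b)$ is precisely what splices local chains across parent boundaries and preserves the chain condition at level $n+1$. By the Brownian bridge decomposition, the bridges in disjoint parents are conditionally independent given the level-$n$ endpoint values, so the selections across parents are conditionally independent.

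The key local estimate, derived via Brownian self-similarity, is the following: on $[0,1]$ conditional on $B(1)\geq B(0)$ componentwise, the expected number $\E[L]$ of valid children satisfies $\E[L]\geq c_1\,N^{1/3}$ for some positive constant $c_1$ (for $N$ sufficiently large). This is the main obstacle. The approach is a first- and second-moment count of candidate length-$L$ chains among the $N-1$ interior $N$-adic vertices: each such chain features $2L+1$ disjoint Brownian increments, independent by the BM structure, each componentwise non-negative with probability $1/4$. The first moment is straightforward; the second-moment bound---which pins down the scaling $L\asymp N^{1/3}$---is the delicate step, requiring careful control on pairs of chains sharing many vertices via Gaussian tail estimates. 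Once $\E[X^2]\lesssim\E[X]^2$ is established for the count $X$ of valid length-$N^{1/3}$ chains, the Paley--Zygmund inequality yields positive probability of $X>0$, whence $\E[L]\geq c_1\,N^{1/3}$ (possibly after inflating the chain length by a constant factor).

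Combined with the conditional independence across parents, this makes the recursive selection a supercritical branching-type process whose offspring mean is at least $c_1\,N^{1/3}$. A standard Galton--Watson / percolation argument then furnishes, with positive probability, a sequence $(\iS_n)$ satisfying both conditions for all $n$; a zero--one argument upgrades this to almost sure existence, and together with the Frostman mass distribution outlined above this yields the claim.
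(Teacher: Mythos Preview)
Your route is genuinely different from the paper's. The paper does not build a nested hierarchy at all: it couples $B$ with rescaled simple random walks $W_n\to B$ uniformly, forms for each $n$ the \emph{greedy} increasing subset of the walk (whose gaps are i.i.d.\ with tail $\P(\tau>n)\sim c\,n^{-1/3}$ by the Denisov--Wachtel cone--exit estimate), proves a uniform $\gamma$-energy bound for the rescaled record sets, and extracts a subsequential Hausdorff limit $D$ with $\dim_\iH D\geq 1/3$ on which $B$ is automatically non-decreasing. The exponent $1/3$ enters solely through the cone--exit tail; no branching or second-moment count is used.

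Your branching scheme has a real gap at the ``key local estimate''. After the bridge decomposition you must condition on the actual endpoint \emph{values} $B(a),B(b)$, not on the event $\{B(b)\geq B(a)\}$; the anchored chain count inside $[a,b]$ then depends on the normalized increment $\Delta=(B(b)-B(a))/|b-a|^{1/2}$. When some coordinate of $\Delta$ is small (which happens with positive probability at every level) the anchoring $B(a)\leq B(a_1')$, $B(b_L')\leq B(b)$ confines the children's values to a thin slab and the expected offspring count is far below $N^{1/3}$, indeed arbitrarily close to $0$. So the offspring law is not uniformly supercritical and the Galton--Watson survival argument does not apply as stated. Two further issues compound this: under the bridge law the $2L{+}1$ block increments you invoke are \emph{not} independent, so the moment computation is not valid inside parents; and even for unconditioned Brownian motion the second-moment bound $\E X^2\lesssim(\E X)^2$ for the count of length-$N^{1/3}$ increasing chains is not a routine estimate---$\E X_L$ stays super-exponentially large well beyond $L\asymp N^{1/3}$ while $\P(X_L>0)\to 0$ there, so correlations are severe, and locating where Paley--Zygmund actually bites is essentially equivalent to proving the $N^{1/3}$ lower bound for the longest increasing subsequence of a planar walk, i.e.\ to the Denisov--Wachtel input the paper imports as a black box.
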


Corollary~\ref{c:fBm} (or \cite[Theorem~1.2]{BP}) implies that, almost surely, the $d$-dimensional Brownian motion $B$
cannot be non-decreasing on any set of Hausdorff dimension larger than $1/2$.
The following problem remains open in all dimensions $d\geq 2$.

\begin{question} Let $d\geq 2$ and let $\{B(t)\colon 0\leq t\leq 1\}$ be a standard $d$-dimensional Brownian motion. What is the supremum of
the numbers $\gamma$ such that, almost surely, $B$ is non-decreasing on some set of Hausdorff dimension $\gamma$?
\end{question}

Finally, we prove restriction theorems for a generic $\alpha$-H\"older continuous function (in the sense of Baire category), see the following section for the details.

\section{Related work and general statements} \label{s:rel}

Let $C[0,1]$ denote the set of continuous functions $f\colon [0,1]\to \R$ endowed with the maximum norm.
Elekes~\cite[Theorems~1.4,~1.5]{E} proved the following restriction theorem.

\begin{theorem}[Elekes]
Assume that $0<\beta<1$. For a generic continuous function $f\in C[0,1]$ (in the sense of Baire category) for all $A\subset [0,1]$
\begin{enumerate}
\item if $f|_{A}$ is $\beta$-H\"older continuous, then $\dim_\iH A\leq 1-\beta$;
\item if $f|_{A}$ is of bounded variation, then $\dim_\iH A\leq 1/2$.
\end{enumerate}
\end{theorem}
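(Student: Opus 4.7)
The plan is to prove both parts of the theorem by a Baire-category argument: express the set of ``bad'' functions as a countable union of closed, nowhere-dense subsets of $C[0,1]$. For part (1), fix $\beta\in(0,1)$, and for each rational $s\in(1-\beta,1)$ and each $M,k\in\N$, let
\[
E_{M,s,k}=\bigl\{f\in C[0,1]:\exists\,\text{compact }A\subset[0,1],\ \iH^s_\infty(A)\geq 1/k,\ |f(x)-f(y)|\leq M|x-y|^\beta\ \forall x,y\in A\bigr\}.
\]
An $f$ violating (1) belongs to some such $E_{M,s,k}$. For part (2), define $F_{M,s,k}$ analogously with rational $s>1/2$ and the H\"older condition replaced by $\mathrm{Var}(f|_A)\leq M$. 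It suffices to show each $E_{M,s,k}$ and $F_{M,s,k}$ is closed and nowhere dense.

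Closedness follows from three ingredients. Blaschke selection yields a subsequence of any witness sets $A_n$ converging in Hausdorff metric to a compact $A\subset[0,1]$. Uniform convergence $f_n\to f$ combined with the Hausdorff convergence $A_n\to A$ transfers both the H\"older inequality (with the fixed constant $M$) and the total-variation bound (with the same $M$) to the limit set $A$. Finally, $\iH^s_\infty$ is upper semicontinuous under Hausdorff-metric convergence of compact sets, since any finite cover of $A$ by open balls also covers $A_n$ for all large $n$; hence $\iH^s_\infty(A)\geq \limsup_n \iH^s_\infty(A_n)\geq 1/k$.

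For nowhere density, given $f\in C[0,1]$ and $\eps>0$ I would perturb to $g=f+h$ with $\|h\|_\infty<\eps$, tailoring $h$ separately for each case. In the H\"older case with $s>1-\beta$, take $h$ to be a self-similar piecewise-linear sawtooth such that, for every sufficiently small dyadic scale $r=2^{-n}$ and every interval $I\subset [0,1]$ of length $r$, there is a sub-interval $J\subset I$ of relative length at least $2^{-s}$ on which $\mathrm{osc}(g|_J)>2Mr^\beta$; this is achievable because the required oscillation $2Mr^\beta\to 0$ as $r\to 0$, so it fits inside an $\eps$-perturbation at all small enough scales. A $(M,\beta)$-H\"older restriction $g|_A$ can then contain at most one point of each such $J$, so iterating the exclusion across scales confines $A$ to a Cantor-like set of Hausdorff dimension at most $1-\beta<s$, forcing $\iH^s_\infty(A)<1/k$. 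In the BV case with $s>1/2$, pick $\eta>0$ with $s>1/2+\eta$ and take $h$ to be a Weierstrass-type sum whose oscillation at scale $r$ is $\gtrsim r^{1/2+\eta}$; a set with $\iH^s_\infty(A)\geq 1/k$ meets at least $\sim r^{-s}$ dyadic intervals of length $r$, yielding at least $\sim r^{-s}$ consecutive pairs whose $g$-variation is $\gtrsim r^{1/2+\eta}$, so $\mathrm{Var}(g|_A)\gtrsim r^{1/2+\eta-s}\to\infty$ as $r\to 0$, contradicting $\mathrm{Var}(g|_A)\leq M$.

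The main obstacle in both cases is the quantitative scale-by-scale estimate translating the prescribed oscillation of $h$ at each scale into a bound on the Hausdorff content (respectively a blow-up of total variation) of any compatible set $A$; once that estimate is set up, the organization into a countable union, the Blaschke selection, and the semicontinuity argument are all routine, and the intersection over $(M,s,k)$ of the complements produces the required dense $G_\delta$ set of good $f$.
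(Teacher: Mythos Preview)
The paper does not prove this theorem; it is quoted as a known result of Elekes. The nearest thing in the paper is the proof of Theorem~\ref{t:gen} for generic $f\in C_1^\alpha[0,1]$, which the authors say follows the concept of Elekes' proof, so that is the natural point of comparison.

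Your Baire-category framework and the closedness argument via Blaschke selection and upper semicontinuity of $\iH^s_\infty$ are correct. Both nowhere-density arguments, however, have real gaps. In the H\"older case, the assertion that $\mathrm{osc}(g|_J)>2Mr^\beta$ forces $A\cap J$ to contain ``at most one point'' is false: what the hypothesis gives is that $g(A\cap I)$ lies in an interval of length $\leq Mr^\beta$, hence (for a monotone tooth on $J$) $A\cap J$ lies in some subinterval of $J$ of length $\leq |J|/2$. But \emph{which} subinterval depends on $A$, so you are not confining $A$ to a fixed Cantor set, and the content bound needs a different argument. The paper, following Elekes, avoids this by passing to level sets: it shows that for the perturbed $f$ and \emph{every} $g\in C_1^\beta[0,1]$ the set $\{f=g\}$ has small $(1-\beta+1/N)$-content, and then observes that any $A$ with $f|_A$ $\beta$-H\"older satisfies $A\subset\{f=g\}$ for the H\"older extension $g$ of $f|_A$ provided by Lemma~\ref{l:ext}. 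This converts the moving-target problem into a fixed one.

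In the BV case your key step---that consecutive sample points $x_i\in A$ satisfy $|g(x_{i+1})-g(x_i)|\gtrsim r^{1/2+\eta}$---is not something a Weierstrass-type perturbation delivers: such functions obey an upper H\"older bound but no uniform pointwise lower bound of that form (indeed, any continuous function has many pairs $x,y$ at distance $\sim r$ with $g(x)=g(y)$). A valid route, taken in the paper for Theorem~\ref{t:gen}\,\eqref{eq:gen2}, is to reduce to part~(1) via Lemma~\ref{l:BV}: if $f|_A$ has bounded variation then, up to a set of dimension $\leq\gamma$, $A$ is a countable union of sets on which $f$ is $\gamma$-H\"older, and choosing $\gamma$ just above $1/2$ finishes.
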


Kahane and Katznelson~\cite[Theorems~2.1,~3.1]{KK},
and independently M\'ath\'e~\cite[Theorems~1.4,~1.5]{M} proved that
the above result is sharp.

\begin{theorem}[Kahane--Katznelson, M\'ath\'e]
\label{t:KKM}
Let $0<\beta<1$. For every $f\in C[0,1]$ there are compact sets $A,C\subset [0,1]$ such that
  \begin{enumerate}
  \item $\dim_\iH A=1-\beta$ and $f|_{A}$ is $\beta$-H\"older continuous;
  \item $\dim_\iH C=1/2$ and $f|_{C}$ is of bounded variation.
  \end{enumerate}
\end{theorem}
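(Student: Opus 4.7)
My plan is to construct the compact sets $A$ and $C$ as Cantor-like sets built hierarchically from $f$. Writing $A = \bigcap_{k\geq 0} A_k$ and $C = \bigcap_{k\geq 0} C_k$, each $A_k$ (resp.\ $C_k$) is a finite union of closed sub-intervals of the components of the previous stage, obtained by subdividing and selecting sub-intervals according to criteria tailored to $\beta$-Hölder (resp.\ BV) regularity. The Hausdorff dimension is estimated by a mass-distribution argument applied to the natural measure on the Cantor set, and the regularity of the restriction of $f$ is controlled by summing scale-by-scale oscillation contributions. Equality of dimension, rather than just a lower bound, is automatic once a compact set of the required dimension has been constructed, by passing to a sub-Cantor set of the prescribed exact dimension.

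For part (2), the central tool is the Erdős--Szekeres theorem. At stage $k$, I subdivide each surviving interval of $C_{k-1}$ into $N_k$ equal sub-intervals and apply Erdős--Szekeres to the values of $f$ at the $N_k + 1$ endpoints: this yields a monotone subsequence of length at least $\lceil\sqrt{N_k}\,\rceil$, and the corresponding sub-intervals form the new stage-$k$ pieces. The limit set satisfies $\dim_\iH C = 1/2$, since the branching ratio gives $\log\sqrt{N_k}/\log N_k = 1/2$ at every scale. The bounded variation of $f|_C$ then follows from summing, over stages, contributions that at stage $k$ are bounded by the number of parent intervals times the oscillation of $f$ on a single parent interval: thanks to the monotone selection of endpoint values, the variation contributed within each parent is at most that parent's oscillation.

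For part (1), the construction instead relies on a pigeonhole on $f$-values: at stage $k$, among the $N_k$ sub-intervals of a surviving interval of length $\ell_k$, grouping them by the horizontal strip of height $\sim \ell_k^\beta$ in which their $f$-values lie and choosing the most populous group yields $\gtrsim N_k^{1-\beta}$ sub-intervals whose $f$-values lie close together. Iterating this selection, with care to maintain a Hölder bound across nested scales, gives a Cantor set of Hausdorff dimension at least $1-\beta$ on which $f$ is $\beta$-Hölder.

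The main obstacle I anticipate is ensuring summability of variation in part (2): the Erdős--Szekeres estimate yields a variation contribution of order $\sqrt{L_k}\cdot\omega_f(1/L_k)$ at stage $k$, with $L_k=N_1\cdots N_k$, which is summable only when $\omega_f(\delta)$ decays faster than $\sqrt{\delta}$. This fails for functions whose modulus is close to $\sqrt{\delta}$ (as already for linear Brownian motion, where $\omega_f(\delta)\sim\sqrt{\delta\log(1/\delta)}$). A refinement will be needed here — for instance by exploiting the record-time structure of $f$ directly, or by choosing sub-intervals at each stage via a global multi-scale optimisation rather than a local per-stage rule — and I expect this to be the technical heart of the proof. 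The analogous delicacy in part (1) is to verify that the iterated pigeonhole at nested scales actually gives a genuine Hölder restriction on the full Cantor set rather than merely a pointwise-at-sampled-points bound; this requires aligning the strip choices across consecutive stages so that the accumulated fluctuations of $f$ between stage-$k$ and stage-$(k+1)$ sub-intervals telescope into the desired $|f(x)-f(y)|\leq C|x-y|^\beta$ estimate.
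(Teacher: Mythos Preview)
The paper does not itself prove this theorem; it is quoted from \cite{KK} and \cite{M} as a known result and then used as an input to Theorem~\ref{t:answer}. So there is no in-paper argument to compare against, and your plan should be measured against those references. For part~(1) your iterated pigeonhole on $f$-values is essentially the Kahane--Katznelson construction, and the cross-scale alignment issue you flag is handled there by choosing strip centres at consecutive stages compatibly. For part~(2), Erd\H{o}s--Szekeres is indeed the main combinatorial tool in \cite{M}, so the broad strategy is right.

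The obstacle you isolate in part~(2) is genuine and in fact more serious than your phrasing suggests. The increment $\sqrt{L_k}\,\omega_f(1/L_k)$ depends only on the scale $L_k$, not on how fast the scales are reached, so no choice of the $N_k$ can force summability: for any continuous $f$ with $\omega_f(\delta)/\sqrt{\delta}\to\infty$ (for instance $\omega_f(\delta)=1/\log(1/\delta)$, far milder than the Brownian modulus you mention) the terms diverge. Neither of your proposed fixes is adequate as stated. Record-time structure is specific to functions with a rich set of running maxima and says nothing for, say, a strictly decreasing $f$, whose record set is a single point; and ``global multi-scale optimisation'' is not yet an idea one can evaluate. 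What is actually needed is a refinement of the per-stage selection beyond bare Erd\H{o}s--Szekeres, so that the variation contributed within each parent is not the parent's full oscillation but a controllably small fraction of it --- the extra pigeonholing costs a sub-power in the branching ratio, which one absorbs by letting the per-stage loss tend to zero. You should consult \cite{M} (or \cite{KK}) for the precise mechanism; without it, the Cantor construction does not yield bounded variation for general $f\in C[0,1]$, and this really is, as you suspect, the technical heart of the proof.
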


In other words there is always a set $A$ with the given properties and
dimension, and for generic functions there is no $A$ of larger dimension.
Let us recall that the \emph{$\beta$-variation} of a function
$f\colon A\to \R$ is defined as
\[V^{\beta} (f)=\sup\left\{\sum_{i=1}^{n} |f(x_{i})-f(x_{i-1})|^{\beta}: x_0<\dots <x_n,~x_i\in A,~n\in \N^+\right\}.\]
In the theorems above, bounded variation can be generalized to finite
$\beta$-variation for all $\beta>0$ by similar methods. For the
following result see M\'ath\'e~\cite[Theorem~5.2]{M}.

\begin{theorem}[M\'ath\'e]\label{t:M}
Let $\beta>0$ and $f\in C[0,1]$. Then there is a compact set $A\subset [0,1]$ such that
$\dim_\iH A=\beta/(\beta+1)$ and $f|_{A}$ has finite $\beta$-variation.
\end{theorem}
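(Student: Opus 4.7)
My approach is to construct $A$ as a Moran-type Cantor set via an iterated pigeonhole argument in the range of $f$, generalizing the construction behind Theorem~\ref{t:KKM}(2). Set $s := \beta/(\beta+1)$ and let $\omega$ denote the modulus of continuity of $f$. Inductively build nested families $\mathcal{I}_0 \supset \mathcal{I}_1 \supset \cdots$ of pairwise disjoint closed intervals of respective lengths $\delta_0 = 1 > \delta_1 > \cdots \to 0$, with $|\mathcal{I}_k| = m_k$. To pass from $\mathcal{I}_k$ to $\mathcal{I}_{k+1}$: inside each $J \in \mathcal{I}_k$ consider its $\lfloor \delta_k/\delta_{k+1}\rfloor$ consecutive sub-intervals of length $\delta_{k+1}$, partition them into $N_k$ range-buckets (of height at most $\omega(\delta_k)/N_k$) according to the value of $f$ at the left endpoint, and by pigeonhole retain $a_{k+1} := \lfloor \delta_k/(\delta_{k+1}N_k)\rfloor$ sub-intervals lying in a single bucket. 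Then $A := \bigcap_k \bigcup_{J\in\mathcal{I}_k} J$ is a compact Cantor-like set satisfying $m_k = 1/(\delta_k\prod_{j\le k}N_j)$.

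The central estimate comes from a scale decomposition. For any increasing sequence $x_0 < x_1 < \cdots < x_n$ in $A$, assign each consecutive pair $(x_{i-1},x_i)$ its \emph{splitting level} $k_i$, the largest $k$ with both points in a common $J \in \mathcal{I}_k$. The pigeonhole selection guarantees $|f(x_i)-f(x_{i-1})| \le \omega(\delta_k)/N_k + 2\omega(\delta_{k+1})$ whenever $k_i = k$, and because the sequence is monotone the number of pairs with $k_i = k$ is at most $m_{k+1}$. Summing,
\[
V^\beta(f|_A) \;\le\; C_\beta \sum_{k \ge 0} m_{k+1}\bigl(\omega(\delta_k)/N_k + \omega(\delta_{k+1})\bigr)^\beta.
\]
The identity $\dim_\iH A = s$ will then follow from a Frostman lower bound on the natural uniform measure on $A$ together with a box-counting upper bound from the cover by $m_k$ intervals of length $\delta_k$, provided $\log m_k/\log(1/\delta_k) \to s$, equivalently $\sum_{j \le k}\log N_j \sim \log(1/\delta_k)/(\beta+1)$.

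It remains to choose $\delta_k$ and $N_k$ inductively so that both the variation sum converges and the dimension asymptotic holds. Given $\delta_k$ and $m_k$, I would first take $\delta_{k+1}$ so small that $m_{k+1}\omega(\delta_{k+1})^\beta \le 2^{-k}$, always possible since $\omega \to 0$, and then set $N_k$ at the threshold $\sim(2^k m_{k+1})^{1/\beta}\omega(\delta_k)$ to make $m_{k+1}(\omega(\delta_k)/N_k)^\beta \le 2^{-k}$. The main obstacle is to reconcile this lower bound on $N_k$ with the dimension cap $\prod_{j\le k} N_j \asymp \delta_k^{-1/(\beta+1)}$: for $\alpha$-H\"older $f$ with $\alpha > 1/(\beta+1)$ the geometric schedule $\delta_k = 2^{-k}$ works at once, but for a general continuous $f$ whose modulus may decay arbitrarily slowly the scales must be chosen very sparsely. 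The saving grace is that at each stage $\delta_{k+1}$ may be taken as small as desired before $N_{k+1}$ is fixed, forcing $\omega(\delta_k)$ to be negligible at every retained scale; carrying this $\omega$-adapted inductive choice through while tracking the dimension is the technical core of the argument.
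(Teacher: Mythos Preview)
The paper does not prove Theorem~\ref{t:M}; it is quoted from \cite[Theorem~5.2]{M} and used as an input. So there is no proof in the paper to compare against, and your proposal must stand on its own.

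Unfortunately there is a genuine gap in your outline, and it is exactly where you say ``carrying this $\omega$-adapted inductive choice through while tracking the dimension is the technical core of the argument.'' The obstruction is not a technicality but an actual incompatibility. Write $s=\beta/(\beta+1)$. With your recursion $m_{k+1}\approx m_k\delta_k/(N_k\delta_{k+1})$, the dimension target $\log m_k/\log(1/\delta_k)\to s$ forces $N_k\approx (\delta_k/\delta_{k+1})^{1-s}$. Substituting this into your variation term $m_{k+1}(\omega(\delta_k)/N_k)^\beta$ and using $(1-s)\beta=s$ gives exactly
\[
m_{k+1}\bigl(\omega(\delta_k)/N_k\bigr)^{\beta}\ \approx\ \bigl(\omega(\delta_k)\,\delta_k^{-1/(\beta+1)}\bigr)^{\beta},
\]
independently of $\delta_{k+1}$. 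The second term $m_{k+1}\,\omega(\delta_{k+1})^\beta$ behaves the same way with $k+1$ in place of $k$. Thus your variation sum is finite only if $\omega(\delta_k)\lesssim \delta_k^{1/(\beta+1)}$ along the chosen scales, i.e.\ only if $f$ is essentially $1/(\beta+1)$-H\"older. A general $f\in C[0,1]$ (say with $\omega(\delta)=1/\log(1/\delta)$) violates this for every choice of scales, and your freedom to take $\delta_{k+1}$ very small does not help: with $\delta_k,N_k$ fixed, $m_{k+1}\omega(\delta_{k+1})^\beta\propto \omega(\delta_{k+1})^\beta/\delta_{k+1}\to\infty$ as $\delta_{k+1}\to 0$; conversely, enlarging $N_k$ to tame this term drives $m_{k+1}$ (hence the dimension) to $0$.

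In short, the pigeonhole-on-values Moran construction with the global modulus $\omega$ only reaches dimension $\beta/(\beta+1)$ for functions that already satisfy a H\"older bound of the right order; M\'ath\'e's argument in \cite{M} proceeds differently and does not rely on $\omega$ in this way. If you want to repair your approach, the $2\omega(\delta_{k+1})$ error in your increment estimate must be eliminated rather than merely controlled---for instance by selecting children so that the \emph{entire} image $f(J')$, not just the value at the left endpoint, lies in the chosen bucket---but even then the bookkeeping of oscillations versus interval counts has to be done without appeal to the global modulus.
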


Our initial interest came from questions of Kahane and Katznelson~\cite{KK}
on restrictions of H\"older continuous functions. First we need the following definition.

\begin{definition}
Let $C^{\alpha}(A)$ be the set of $\alpha$-H\"older continuous functions $f\colon A\to \R$.
For all $0<\alpha<1$ and $\beta>0$ define
  \begin{align*}
    H(\alpha,\beta)&=\inf_{f\in C^{\alpha}[0,1]} \sup_{A\subset [0,1]} \left\{\dim_\iH A: f|_{A}\in C^{\beta}(A)\right\}, \\
    V(\alpha,\beta)&=\inf_{f\in C^{\alpha}[0,1]} \sup_{A\subset [0,1]} \left\{\dim_\iH A: V^{\beta}(f|_{A})<\infty\right\}.
  \end{align*}
  Replacing Hausdorff dimension by upper Minkowski dimension yields
  \begin{align*}
  \overline{H}(\alpha,\beta)&=\inf_{f\in C^{\alpha}[0,1]} \sup_{A\subset [0,1]} \left\{\overline{\dim}_\iM A: f|_{A}\in C^{\beta}(A)\right\}, \\
  \overline{V}(\alpha,\beta)&=\inf_{f\in C^{\alpha}[0,1]} \sup_{A\subset [0,1]} \left\{\overline{\dim}_\iM A: V^{\beta}(f|_{A})<\infty\right\}.
\end{align*}
\end{definition}

\begin{remark} As the Hausdorff dimension is smaller than or equal to the upper Minkowski dimension,
$H(\alpha,\beta)\leq \overline{H}(\alpha,\beta)$ and $V(\alpha,\beta)\leq \overline{V}(\alpha,\beta)$. If $\beta\geq 1/\alpha$ then $V^{\beta}(f)<\infty$ for all $f\in C^{\alpha}[0,1]$, so $V(\alpha,\beta)=\overline{V}(\alpha,\beta)=1$.
\end{remark}

For the following theorem see \cite[Theorems~5.1,~5.2]{KK}.

\begin{theorem}[Kahane--Katznelson]
  For all $0<\alpha<\beta<1$ we have
  \[H(\alpha,\beta)\leq \frac{1-\beta}{1-\alpha} \quad \textrm{and} \quad
  V(\alpha,1)\leq \frac{1}{2-\alpha}.\]
\end{theorem}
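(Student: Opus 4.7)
Both inequalities are upper bounds on an infimum over $C^\alpha[0,1]$, so the plan is purely constructive: for each pair $(\alpha,\beta)$ I would exhibit an $\alpha$-Hölder function $g$ whose restriction to any set $A$ satisfying the prescribed regularity (smoother Hölder, resp.\ bounded variation) forces $\dim_\iH A$ to be small. The natural template is a lacunary Weierstrass-type sum
\[
g(x)=\sum_{k\geq 0} b^{-k\alpha}\,\phi(b^k x),
\]
with $\phi$ a fixed Lipschitz sawtooth on $\R$ and $b=b(\alpha,\beta)\geq 2$ an integer base chosen large. The standard head/tail split gives $g\in C^\alpha[0,1]$. An attractive alternative is a genuinely self-affine function (like the one constructed in Definition~\ref{d:sa} below), which is arguably cleaner for the recursive analysis.

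For the Hölder bound $H(\alpha,\beta)\leq (1-\beta)/(1-\alpha)$, I would suppose $g|_A\in C^\beta$ with constant $L$ and work with the dyadic partition at scale $\delta=b^{-n}$. On a cell $I$ of length $\delta$, the high-frequency block $b^{-n\alpha}\phi(b^n\cdot)$ contributes a sawtooth of amplitude $b^{-n\alpha}$ with sub-teeth of width $\delta/b$, while the low-frequency blocks are Lipschitz with a small constant on $I$. The $\beta$-Hölder restriction forces $A\cap I$ to land, via the high-frequency block, inside a window of height $\lesssim L\delta^\beta$, hence inside a union of at most $\lesssim Lb^{n(\alpha-\beta)+1}$ sub-teeth. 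Iterating the same squeezing inside each surviving sub-cell and solving the resulting scale-by-scale recurrence produces the covering estimate $\dim_\iH A\leq (1-\beta)/(1-\alpha)$.

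For the BV bound $V(\alpha,1)\leq 1/(2-\alpha)$ the same $g$ is used, but the window condition is replaced by a cumulative-variation accounting. At scale $b^{-n}$, each sub-tooth $A$ enters and leaves contributes $\gtrsim b^{-n\alpha}$ to $V(g|_A)$; so if $M_n$ denotes the total number of tooth-crossings realised by $A$ at scale $b^{-n}$, one gets the budget
\[
\sum_{n\geq 0} M_n\, b^{-n\alpha}\;\lesssim\;V(g|_A).
\]
Combining this with a lower bound on $M_n$ in terms of the scale-$b^{-n}$ covering number of $A$, and optimising the resulting scale-coupled constraints via a Cauchy–Schwarz-type interpolation, produces the target exponent $1/(2-\alpha)$.

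The main obstacle is the final recursive/combinatorial step that pins down the precise exponents. What makes the numbers $(1-\beta)/(1-\alpha)$ and $1/(2-\alpha)$ delicate (rather than the weaker bounds one reads off a single scale) is that they only emerge when all scales are combined simultaneously, with the sub-tooth counts at each scale feeding the available variation/oscillation budget at the next. Making this accounting tight, rather than losing logarithmic or polynomial factors, is what forces the specific lacunary growth of the frequencies $b^k$ and the particular shape of $\phi$; the design of $b$ and $\phi$ as explicit functions of $\alpha$ (and $\beta$) is where the real technical work lies.
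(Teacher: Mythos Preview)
The paper does not prove this theorem at all: it is quoted as a known result of Kahane and Katznelson, with the explicit pointer ``For the following theorem see \cite[Theorems~5.1,~5.2]{KK}.'' There is therefore no proof in the paper to compare your proposal against. Moreover, the entire thrust of the paper is that these bounds are \emph{not} sharp: Theorem~\ref{t:answer} gives the strictly smaller values $H(\alpha,\beta)=1-\beta$ and $V(\alpha,1)=\max\{1/2,\alpha\}$, obtained by an altogether different mechanism (scaled local time estimates, Theorems~\ref{t:Holder} and \ref{t:variation}) applied to self-affine functions and fractional Brownian motion.

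That said, your sketch is a fair description of the original Kahane--Katznelson argument: they do use lacunary/Weierstrass-type constructions, and the multi-scale window/variation accounting you outline is exactly the shape of their proof. Your closing paragraph correctly identifies where the work lies---tying the scales together without loss is what produces the specific exponents $(1-\beta)/(1-\alpha)$ and $1/(2-\alpha)$---but since the paper neither reproduces nor relies on that argument, a detailed comparison is not possible here. If you want to pursue the sharp bounds instead, the relevant constructions and the local-time machinery are in Sections~\ref{s:slte} and~\ref{s:KK}.
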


\begin{question}[Kahane--Katznelson] \label{q:KK} Are the above bounds optimal?
\end{question}

We answer this question negatively and find the sharp bounds, which generalizes Theorem~\ref{t:Va}.

\begin{theorem}\label{t:answer}
For all $0<\alpha<1$ we have
  \begin{align*}
    H(\alpha,\beta)&=\overline{H}(\alpha,\beta)=1-\beta \textrm{ for all } \alpha<\beta\leq 1, \\
    V(\alpha,\beta)&=\overline{V}(\alpha,\beta)=\max\left\{\alpha\beta, \beta/(\beta+1)\right\} \textrm{ for all }
    0<\beta<1/\alpha.
  \end{align*}
\end{theorem}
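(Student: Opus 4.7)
I would prove the four equalities by matching lower and upper bounds.

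\emph{Lower bounds.} Since $C^{\alpha}[0,1]\subset C[0,1]$, Theorem~\ref{t:KKM}(1) applied to any $f\in C^{\alpha}$ furnishes a compact $A$ with $\dim_{\iH}A=1-\beta$ and $f|_{A}\in C^{\beta}$, giving $H(\alpha,\beta)\geq 1-\beta$. Likewise Theorem~\ref{t:M} gives $V(\alpha,\beta)\geq \beta/(\beta+1)$. For $V(\alpha,\beta)\geq \alpha\beta$, I observe that if $\overline{\dim}_{\iM}A=s<\alpha\beta$ and $f\in C^{\alpha}$ has H\"older constant $L$, then for any partition $x_{0}<\cdots<x_{n}$ of $A$ the dyadic scales $I_{k}=\{i:2^{-k-1}<x_{i}-x_{i-1}\leq 2^{-k}\}$ satisfy $|I_{k}|\leq C\,2^{k(s+\eta)}$ for any $\eta>0$, since $\{x_{i-1}:i\in I_{k}\}$ is a $2^{-k-1}$-separated subset of $A$. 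Hence
\[
\sum_{i}|f(x_{i})-f(x_{i-1})|^{\beta}\leq L^{\beta}\sum_{k}|I_{k}|\cdot 2^{-k\alpha\beta}\leq C'\sum_{k}2^{k(s+\eta-\alpha\beta)}<\infty
\]
whenever $\eta<\alpha\beta-s$. Applying this to a self-similar Cantor set $K\subset[0,1]$ with $\dim_{\iH}K=\overline{\dim}_{\iM}K=s\nearrow\alpha\beta$ gives $V(\alpha,\beta),\overline{V}(\alpha,\beta)\geq \alpha\beta$; together with $\overline{H}\geq H$ and $\overline{V}\geq V$ this finishes the lower bounds.

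\emph{Upper bounds.} For these I need a single $f\in C^{\alpha}$ realizing each infimum. Neither the fractional Brownian motion of Theorem~\ref{t:fBm0} nor the self-affine functions of Theorem~\ref{t:sa0} suffice: both admit flat (zero or locally constant) subsets of dimension $1-\alpha$, so they only bound their restriction dimensions by $1-\alpha$, weaker than the target $1-\beta$. Instead, the right witness is a \emph{generic} $f\in C^{\alpha}[0,1]$ in the sense of Baire category. The paper's generic-$\alpha$-H\"older restriction theorems (proved in subsequent sections) yield, for such $f$, that $f|_{A}\in C^{\beta}$ with $\beta>\alpha$ forces $\overline{\dim}_{\iM}A\leq 1-\beta$, and $V^{\beta}(f|_{A})<\infty$ forces $\overline{\dim}_{\iM}A\leq \max\{\alpha\beta,\beta/(\beta+1)\}$. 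These give $\overline{H}(\alpha,\beta)\leq 1-\beta$ and $\overline{V}(\alpha,\beta)\leq \max\{\alpha\beta,\beta/(\beta+1)\}$, which combined with $H\leq \overline{H}$ and $V\leq \overline{V}$ yield all four equalities.

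\emph{Main obstacle.} The substantive technical content lies in the generic $C^{\alpha}$ restriction theorems. The framework is to express each ``bad'' event as a countable union -- indexed by dyadic-scale covering data -- of closed sets in $C^{\alpha}[0,1]$, and to prove each is nowhere dense via a quantitative perturbation: given $f\in C^{\alpha}$, a set $A$ of excess upper Minkowski dimension, and $\eps>0$, construct $g\in C^{\alpha}$ with $\|g-f\|_{C^{\alpha}}<\eps$ whose restriction to $A$ violates the required H\"older or $\beta$-variation bound. The delicate point is hitting the sharp thresholds $1-\beta$ and $\max\{\alpha\beta,\beta/(\beta+1)\}$ -- rather than the coarser $1-\alpha$ bound that arises for fBm or self-affine examples -- which demands a perturbation tuned to the exact dyadic scale at which $\beta$-H\"older control fails, and, for the $\beta$-variation statement, a two-regime analysis interpolating between the trivial $\alpha\beta$ bound and the oscillation-based $\beta/(\beta+1)$ bound.
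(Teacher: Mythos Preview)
Your upper-bound strategy contains a real gap, and it stems from a misreading of what the self-affine functions can do.

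You assert that the self-affine functions $f_{\alpha}$ of Theorem~\ref{t:sa} ``only bound their restriction dimensions by $1-\alpha$, weaker than the target $1-\beta$.'' This is true of $f_{\alpha}$ itself, but the paper does \emph{not} use $f_{\alpha}$ as the witness. The key observation you missed is that for any $\gamma>\alpha$ the function $f_{\gamma}$ is still in $C^{\alpha}[0,1]$, so it is a legitimate competitor in the infimum defining $\overline{H}(\alpha,\beta)$. The paper picks $\gamma\in\Delta\cap(\alpha,\beta)$; then Theorem~\ref{t:sa}\eqref{eq:sa1} applied to $f_{\gamma}$ gives $\overline{\dim}_{\iM}A\leq 1-\gamma$ whenever $f_{\gamma}|_{A}\in C^{\beta}$, and letting $\gamma\nearrow\beta$ yields $\overline{H}(\alpha,\beta)\leq 1-\beta$. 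For $\overline{V}$ one similarly takes $\gamma$ just above $\max\{\alpha,1/(\beta+1)\}$, so that $\max\{1-\gamma,\gamma\beta\}=\gamma\beta$, and lets $\gamma$ decrease to that value. No generic function is needed.

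Your proposed alternative via generic functions does not deliver the Minkowski statements. The paper's generic restriction theorem (Theorem~\ref{t:gen}) bounds only the \emph{Hausdorff} dimension of $A$, not $\overline{\dim}_{\iM}A$; your sentence claiming the generic theorems force $\overline{\dim}_{\iM}A\leq 1-\beta$ misstates their conclusion. Thus your plan, taken at face value, would establish $H(\alpha,\beta)\leq 1-\beta$ and $V(\alpha,\beta)\leq\max\{\alpha\beta,\beta/(\beta+1)\}$ but leave $\overline{H}$ and $\overline{V}$ unbounded above. Your ``main obstacle'' paragraph sketches a Baire-category argument phrased in terms of upper Minkowski dimension, but the paper's actual proof of Theorem~\ref{t:gen} works with Hausdorff content $\iH^{s}_{\infty}$, and it is not clear that the perturbation scheme you describe extends to control Minkowski covers; you would be proving a strictly stronger result than anything in the paper, with no supporting detail.

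Your lower bounds are fine; the direct $\beta$-variation estimate on a set of small upper Minkowski dimension is essentially the content of Theorem~\ref{t:A} and works as a substitute.
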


In \cref{s:slte} we prove restriction theorems for functions which satisfy certain
scaled local time estimates. This allows us to prove the following more general version of Theorems~\ref{t:sa0} and \ref{t:fBm0},
see \cref{s:KK,s:fBm}, respectively.

\begin{theorem} \label{t:sa} There is a dense set $\Delta \subset (0,1)$ with the following property. For each $\alpha\in
\Delta$ there is a self-affine function $f_\alpha \in C^{\alpha}[0,1]$ such that for all $A\subset [0,1]$
\begin{enumerate}
\item \label{eq:sa1} if $f_{\alpha}|_A\in C^{\beta}(A)$ for some $\beta>\alpha$, then $\overline{\dim}_\iM A\leq 1-\alpha$;
\item \label{eq:sa2} if $V^{\beta}(f_{\alpha}|_A)<\infty$ for some $\beta>0$, then $\overline{\dim}_\iM A\leq \max\{1-\alpha,\alpha\beta\}$.
\end{enumerate}
\end{theorem}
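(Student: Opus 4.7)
My plan is to construct the self-affine functions $f_\alpha$ explicitly, verify they satisfy the scaled local time estimates developed in Section~\ref{s:slte}, and then invoke the general restriction theorems proved there.

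First I would set $\Delta=\{\log k/\log N:k,N\in\Z^+,\ 1<k<N\}$, which is dense in $(0,1)$ (given $\alpha\in(0,1)$, take $N$ large and $k=\lceil N^{\alpha}\rceil$). For $\alpha=\log k/\log N\in\Delta$, I would define $f_\alpha$ as in Definition~\ref{d:sa}: iterate a piecewise-affine initial map on $N$ equal subintervals with slopes $\pm N^{\alpha}$, the $N$ reflections chosen so that the vertical subranges of height $1/k$ cover $[0,1]$ as evenly as possible. Standard arguments yield $f_\alpha\in C^{\alpha}[0,1]$, and self-affinity implies that for every level-$n$ dyadic subinterval $I\subset[0,1]$ of length $N^{-n}$, $f_\alpha(I)$ is an interval of length $N^{-\alpha n}$ and $f_\alpha|_I$ is an affine copy of $f_\alpha$.

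The crux will be the scaled local time estimate: I plan to show there exists $C=C(\alpha)$ such that for every interval $J\subset\R$ of length $r$ and every $\delta\in(0,1]$,
\[
\iN_\delta(f_\alpha^{-1}(J))\leq C\max\{r\delta^{-1},\,\delta^{-(1-\alpha)}\}.
\]
The proof is by induction on the dyadic level where $\delta=N^{-n}$, using the first-level multiplicity bound: any $y$-interval of length $1/k$ meets at most $M\leq cN^{1-\alpha}$ of the $N$ first-level subranges, a combinatorial property that, together with the even arrangement of reflections, constrains $(N,k)$ and cuts out the subset $\Delta$. Iterating, at level $n$ the image of at most $M^n$ level-$n$ subintervals meets any fixed interval of length $k^{-n}$; non-dyadic scales follow by monotonicity in $\delta$ and $r$.

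Both conclusions then follow from Section~\ref{s:slte}. For part~(1), if $f_\alpha|_A\in C^{\beta}(A)$ with $\beta>\alpha$, then for each level-$n$ interval $I$ meeting $A$ I have $A\cap I\subset f_\alpha^{-1}(J_I)$ with $|J_I|\leq C'N^{-\beta n}$; after rescaling $I\to[0,1]$ the estimate at scale $N^{-m}$ yields
\[
\iN_{n+m}(A)\leq \iN_n(A)\cdot C\max\{N^{m-(\beta-\alpha)n},\,N^{(1-\alpha)m}\},
\]
and optimizing $\theta=n/(n+m)$ self-consistently closes at $\overline{\dim}_\iM A\leq 1-\alpha$. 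For part~(2), concatenating partitions gives $\sum_I \omega_I^{\beta}\leq C\,V^{\beta}(f_\alpha|_A)$ for the oscillations $\omega_I$ of $f_\alpha$ on $A\cap I$; combining this with the a priori bound $\omega_I\leq C'N^{-\alpha n}$ via a power-mean interpolation feeds into the analogous recursion and closes at $\max\{1-\alpha,\alpha\beta\}$. The principal obstacle throughout is the uniform verification of the scaled local time estimate, which reduces to the combinatorial arrangement of reflections in the initial polygon guaranteeing $M\leq cN^{1-\alpha}$.
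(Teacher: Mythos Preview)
Your approach is essentially the paper's: construct an explicit self-affine $f_\alpha$, verify a scaled local time bound, and feed this into the recursion machinery of Section~\ref{s:slte}. Two points of comparison. First, the paper's execution is more modular: it defines $f_\alpha=f_{k,m}$ with $M=km$ via the explicit affine maps $F_{ik+j}$ (so that each value interval $(qk^{-n},(q+1)k^{-n})$ is hit by exactly $m^{n-\ell}=M^{(1-\alpha)(n-\ell)}$ level-$n$ subintervals of any level-$\ell$ interval), reads off $A_{n,\ell,p,q}(f_\alpha)\leq 3M^{(1-\alpha)(n-\ell)}$ in two lines, and then simply \emph{invokes} Theorems~\ref{t:Holder} and~\ref{t:variation} as black boxes. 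Your sketch instead re-derives those recursions; this is correct but redundant given that you already reference Section~\ref{s:slte}. Second, your set $\Delta=\{\log k/\log N:1<k<N\}$ is too broad as written: the ``even covering'' of $[0,1]$ by subranges of height $1/k$ forces $k\mid N$, and the paper additionally takes $m=N/k$ odd so that the zig-zag path is continuous with $f_\alpha(0)=0$, $f_\alpha(1)=1$. You seem aware of this (``constrains $(N,k)$ and cuts out the subset $\Delta$''), but the definition and the construction should be made consistent; the paper's $\Delta=\{\log k/\log(km):k,m\geq 2,\ m\text{ odd}\}$ is still dense.
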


\begin{theorem} \label{t:fBm} Let $0<\alpha<1$ and let $\{B(t): t\in [0,1]\}$ be a fractional Brownian motion of Hurst index $\alpha$.
Then, almost surely, for all $A\subset [0,1]$
\begin{enumerate}
\item if $B|_{A}\in C^{\beta}(A)$ for some $\beta>\alpha$, then $\overline{\dim}_\iM A\leq 1-\alpha$;
\item \label{eq:fBm2} if $V^{\beta}(B|_{A})<\infty$ for some $\beta>0$, then $\overline{\dim}_\iM A\leq \max\{1-\alpha, \alpha \beta\}$.
\end{enumerate}
\end{theorem}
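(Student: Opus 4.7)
The plan is to reduce Theorem~\ref{t:fBm} to the general restriction theorem proved in Section~\ref{s:slte}, by verifying that fBm almost surely enjoys the scaled local time estimate on which that theorem rests. Concretely, I would establish that almost surely, for every $\eta>0$ there is a random $C=C(\omega,\eta)<\infty$ such that for all $0<s\le r\le 1$, every closed interval $I\subset[0,1]$ of length $r$, and every $y\in\R$,
\[
  N_s\!\bigl(\{t\in I:\ |B(t)-y|\le s^\alpha\}\bigr)\ \le\ C\,(r/s)^{1-\alpha+\eta},
\]
where $N_s$ denotes the minimal number of intervals of length $s$ needed to cover the set. This is the cover-number analogue of the classical occupation-time bound for fBm; informally, a strip of height $s^\alpha$ around any level over a window of length $r$ meets on the order of $(r/s)^{1-\alpha}$ intervals of length $s$, in line with $\dim_\iH\iZ=1-\alpha$.

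To prove the estimate I would use self-similarity, $B(a+r\cdot)-B(a)\stackrel{d}{=}r^\alpha\tilde B$, to reduce a bound at $(r,s,I,y)$ to one for a standard fBm on $[0,1]$ with rescaled parameters, then invoke moment bounds on the supremum of the fBm local time (Berman, Geman--Horowitz) to derive polynomial tail estimates of the form $\P\!\bigl(N_s(\cdot)>C_0(r/s)^{1-\alpha+\eta}\bigr)\le C_1(r/s)^{-p}$ for arbitrary $p$. A Borel--Cantelli union bound over dyadic values of $s,r$ and of the left endpoint of $I$ then produces almost-sure uniformity; the continuum of levels $y$ is handled by discretizing at spacing $\sim s^\alpha$ and using H\"older continuity of $L(t,\cdot)$ in $y$ to interpolate.

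With the estimate in hand, the machinery of Section~\ref{s:slte} yields both conclusions. For~(1): if $B|_A\in C^\beta(A)$ with $\beta>\alpha$, then in each good $r$-interval $I_j$ the image $B(A\cap I_j)$ is contained in an interval of length $Cr^\beta$; choosing $s=r^{\beta/\alpha}$ makes $Cr^\beta\le s^\alpha$, and the estimate delivers a scale reduction $N_s(A\cap I_j)\le C(r/s)^{1-\alpha+\eta}$ which iterates to $\overline{\dim}_\iM A\le 1-\alpha$. For~(2): after a scale-$s$ dyadic partition, the finite $\beta$-variation bounds the number of ``oscillating'' $s$-intervals (those on which the oscillation of $B$ restricted to $A$ is at least $s^\alpha$) by $Cs^{-\alpha\beta}$, and the scaled local time estimate bounds the ``quiet'' intervals (those on which this oscillation is smaller) by $Cs^{-(1-\alpha)-\eta}$, together yielding $\overline{\dim}_\iM A\le\max\{1-\alpha,\alpha\beta\}$.

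The principal obstacle is the proof of the uniform almost-sure scaled local time estimate itself: the tail bounds must be sharp enough to absorb the union bound over $\asymp r^{-1}(r/s)^\alpha$ parameter pairs at each pair of dyadic scales, which in turn demands moment bounds of all orders on $\|L(1,\cdot)\|_\infty$. Once those tail estimates are in hand, the self-similar reduction, the Borel--Cantelli step, and the discretization of $y$ become standard.
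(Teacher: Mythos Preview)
Your overall strategy---prove that fBm almost surely satisfies the scaled local time estimate and then invoke Theorems~\ref{t:Holder} and~\ref{t:variation}---is exactly the paper's (see Proposition~\ref{p:key}). The difference, and the gap, is in how you propose to obtain the estimate. Your tail bound $\P\bigl(N_s(\cdot)>C_0(r/s)^{1-\alpha+\eta}\bigr)\le C_1(r/s)^{-p}$ decays only in $r/s$, not in the ambient scale $n$. At dyadic scales $r=2^{-m}$, $s=2^{-n}$ the number of parameter pairs $(I,y)$ is of order $2^{m}\cdot 2^{\alpha n}$ (your count $r^{-1}(r/s)^\alpha=2^{m}2^{\alpha(n-m)}$ already understates this), while your failure probability is $2^{-p(n-m)}$. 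For $n-m$ bounded the product is of order $2^{(1+\alpha)n}$ and Borel--Cantelli fails. Allowing an $\eta$-slack only makes the target trivial for $r/s$ below a fixed constant, which does not help: the problematic regime is $n-m$ bounded but $n\to\infty$. Polynomial moments of $\|L(1,\cdot)\|_\infty$, even of all orders, only yield tails polynomial in the threshold, which are likewise too weak against an exponential-in-$n$ union.

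The paper circumvents this by not going through local time moments at all. Instead it uses Pitt's strong local nondeterminism directly to bound $\E\bigl(X^{n,m,q}_\tau\mid\iF_\tau\bigr)\le c\,2^{(1-\alpha)(n-m)}$ for every bounded stopping time $\tau$ (Lemma~\ref{l:sln}), and then a stopping-time/conditional-Markov iteration (Lemma~\ref{l:Snmpq}) converts this into a genuinely exponential tail $\P\bigl(S_{n,m,p,q}(B)\ge \ell C\,2^{(1-\alpha)(n-m)}\bigr)\le 2^{-\ell}$. Taking $\ell\asymp n\log n$ beats the $2^{(1+\alpha)n}$ union bound with room to spare. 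The passage from discrete counts $S_{n,m,p,q}$ to the covering counts $A_{n,m,p,q}$ is then handled via the elementary H\"older bound on $B$ (Lemma~\ref{l:ub}), not via regularity of $L(\cdot,y)$ in $y$; this is both simpler and avoids the extra randomness in the local-time modulus. If you want to salvage your route, you would need exponential moment bounds for $\sup_y L(1,y)$ (which do exist for fBm) rather than moments of all orders, and you would still need a careful argument linking covering numbers to local time via the H\"older continuity of $B$.
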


The zero set of $B$ and the following result (see \cite[Theorem~4.3]{BP}) with Lemma~\ref{l:ub}
witness that Theorem~\ref{t:fBm}~\eqref{eq:fBm2} is sharp for all $\beta\leq 1/\alpha$.

\begin{theorem}\label{t:A}
Let $0<\alpha<1$ and $0<\beta \leq 1/\alpha$ be fixed. Then there is a compact set $A\subset [0,1]$ (which depends only on $\alpha$ and $\beta$) such that $\dim_\iH A=\alpha \beta$ and if $f\colon [0,1]\to \R$ is a function and $c\in \R^+$ such that for all $x,y\in [0,1]$ we have
\begin{equation*} \label{eq:mod}
|f(x)-f(y)|\leq c|x-y|^{\alpha}\log (1/|x-y|),
\end{equation*}
  then $f|_{A}$ has finite $\beta$-variation.
\end{theorem}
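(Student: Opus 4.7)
The plan is to construct a compact set $A\subset [0,1]$ of Hausdorff dimension $\gamma := \alpha\beta$ whose covering numbers beat the Ahlfors $\gamma$-regular rate by a polylogarithmic factor, and then control the $\beta$-variation of any $f$ satisfying the stated modulus by a dyadic grouping of gaps. Concretely, I aim for an estimate
\[
N(A, r) \leq C\, r^{-\gamma}\,(\log(1/r))^{-s}
\]
with some fixed $s > \beta + 1$, where $N(A, r)$ denotes the minimum number of closed intervals of length $r$ needed to cover $A$. Once this estimate is in hand the verification of $V^{\beta}(f|_A)<\infty$ is routine.

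\textbf{Construction of $A$.} Fix an integer $b \geq 2$ and $s > \beta + 1$. I would build $A = \bigcap_{k\geq 0} \iA_k$ as an inhomogeneous Cantor set where each $\iA_k$ is a disjoint union of closed intervals of length $b^{-k}$. At a "typical" level $k$, each length-$b^{-k+1}$ interval of $\iA_{k-1}$ is split into $b$ equal pieces and $\lceil b^{\gamma}\rceil$ of them are retained in an evenly spaced pattern, with a standard rounding trick used to achieve exponent exactly $\gamma$ in the limit. At a sparse subsequence of \emph{thinning} levels $k=k_j$ (for example $k_j = 2^j$) one retains only a single subinterval per parent. Choosing the $k_j$ so that $|\{j : k_j \leq k\}| \gtrsim \log k$ gives $\#\iA_k \leq C\, b^{k\gamma}\,(\log k)^{-s}$, which yields the claimed covering estimate and hence $\overline{\dim}_\iM A \leq \gamma$. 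The matching lower bound $\dim_\iH A \geq \gamma$ follows by a Frostman-type mass distribution: the natural measure $\mu$ putting mass $(\#\iA_k)^{-1}$ on each level-$k$ interval satisfies $\mu(B_r(x)) \lesssim r^\gamma(\log(1/r))^s$ uniformly, so its $t$-energy is finite for every $t < \gamma$.

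\textbf{Dyadic scale argument.} Suppose $f$ satisfies $|f(x)-f(y)| \leq c\,|x-y|^\alpha \log(1/|x-y|)$, and fix any finite increasing tuple $x_0 < \cdots < x_n$ in $A$. With $d_i = x_i - x_{i-1}$, group the indices by scale:
\[
G_k = \{\, i : 2^{-k-1} < d_i \leq 2^{-k}\,\}, \qquad k \geq 0.
\]
For $i < j$ both in $G_k$ we have $x_j - x_i \geq d_j > 2^{-k-1}$, so $\{x_i : i \in G_k\}$ is a $2^{-k-1}$-separated subset of $A$ and hence $|G_k| \leq N(A, 2^{-k-1}) \leq C'\, 2^{k\gamma}\, k^{-s}$. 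Using $d_i \leq 2^{-k}$ and $\log(1/d_i) \leq (k+1)\log 2$ for $i \in G_k$, the pointwise modulus of continuity yields
\[
\sum_{i \in G_k} |f(x_i)-f(x_{i-1})|^\beta \leq c^\beta\, |G_k|\, 2^{-k\gamma}\,((k+1)\log 2)^\beta \leq C''\, k^{\beta - s}.
\]
Summing over $k \geq 0$ and invoking $s > \beta + 1$ produces a bound independent of $n$, proving $V^\beta(f|_A) < \infty$.

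\textbf{Main obstacle.} The analytic estimate in the last paragraph is essentially forced once the covering bound is available; the real work is in the construction of $A$. One must ensure simultaneously that the logarithmic thinning is strong enough (polylog exponent exceeding $\beta+1$) to make the dyadic sum converge, \emph{and} that the resulting natural measure remains regular enough for a Frostman argument to preserve the Hausdorff dimension at $\gamma$. Balancing these two requirements, and handling the boundary case $\beta = 1/\alpha$ where $\gamma = 1$ and $A$ must simultaneously have full Hausdorff dimension and Lebesgue measure zero with a prescribed rate, is the delicate part of the argument.
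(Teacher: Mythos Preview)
The paper does not prove Theorem~\ref{t:A}; it is quoted from \cite[Theorem~4.3]{BP} and used only as an input to show sharpness of Theorem~\ref{t:fBm}\eqref{eq:fBm2}. So there is no proof in this paper to compare your proposal against.

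That said, your proposal is a sound self-contained argument. The dyadic grouping of gaps is exactly the right mechanism: once you have a compact set $A$ with
\[
N(A,r)\le C\,r^{-\gamma}(\log(1/r))^{-s},\qquad s>\beta+1,
\]
the bound on $|G_k|$ via separation and the estimate $\sum_k|G_k|\,2^{-k\gamma}k^{\beta}<\infty$ go through precisely as you wrote. The construction you sketch (homogeneous Cantor set with sparse thinning levels $k_j=2^j$) does produce such an $A$ with $\dim_\iH A=\gamma$; the Frostman measure bound $\mu(B_r)\lesssim r^{\gamma}(\log(1/r))^{s}$ is harmless for the lower dimension bound since any polylogarithmic factor is absorbed when testing exponents $t<\gamma$.

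One small slip: with $k_j=2^j$ and $T(k)=|\{j:k_j\le k\}|\sim\log_2 k$, the interval count is
\[
\#\iA_k\approx b^{\gamma(k-T(k))}\approx b^{\gamma k}\,k^{-s'}\quad\text{with }s'=\gamma\log_2 b,
\]
i.e.\ a negative power of $k$, not of $\log k$ as you wrote. This is stronger than what you claimed and, since $k\sim\log_b(1/r)$, it is exactly what gives $N(A,r)\lesssim r^{-\gamma}(\log(1/r))^{-s'}$. Choosing $b$ large enough so that $s'>\beta+1$ then closes the argument, including the boundary case $\gamma=1$.
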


In Section~\ref{s:KK} we prove Theorem~\ref{t:answer} by using \cref{t:sa} to obtain the sharp upper bounds for $\overline{H}(\alpha,\beta)$ and $\overline{V}(\alpha,\beta)$. Theorem~\ref{t:fBm} may be used there instead of Theorem~\ref{t:sa}.
Finally, Theorems \ref{t:KKM},~\ref{t:M} and~\ref{t:A} provide the optimal lower bounds for $H(\alpha,\beta)$ and $V(\alpha,\beta)$.

In Section~\ref{s:2D} we consider higher dimensional Brownian motion and prove Theorem~\ref{t:2DBM}.
In order to do so, we establish a general limit theorem for random sequences with i.i.d.\ increments,
which is of independent interest.

Finally, in Section~\ref{s:generic} we study generic $\alpha$-H\"older continuous
functions in the sense of Baire category.

\begin{definition} For $0<\alpha<1$ let $C_1^{\alpha}[0,1]$ be the set of functions $f\colon [0,1]\to \R$ such that
for all $x,y\in [0,1]$ we have
\[|f(x)-f(y)|\leq |x-y|^{\alpha}.\]
Let us endow $C_1^{\alpha}[0,1]$ with the maximum metric, then it is a complete metric
space and hence we can use Baire category arguments.
\end{definition}

We show that a generic $f\in C_1^{\alpha}[0,1]$ witnesses $H(\alpha,\beta)=1-\beta$ and
$V(\alpha,\beta)=\max\{\alpha\beta, \beta/(\beta+1)\}$ for all $\beta$ simultaneously.

\begin{theorem} \label{t:gen} Let $0<\alpha<1$. For a generic $f\in C_1^{\alpha}[0,1]$ for all $A\subset [0,1]$
\begin{enumerate}
\item \label{eq:gen1}
if $f|_{A}\in C^{\beta}(A)$ for some $\alpha<\beta\leq 1$, then
$\dim_\iH A\leq 1-\beta$;
\item \label{eq:gen2} if $V^{\beta}(f|_{A})<\infty$ for some $\beta>0$, then
$\dim_\iH A\leq \max\{\alpha\beta, \beta/(\beta+1)\}$.
\end{enumerate}
\end{theorem}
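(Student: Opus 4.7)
The approach is Baire category. I will produce a dense $G_\delta$ set $\iG \subseteq C_1^\alpha[0,1]$ whose every element satisfies both (1) and (2) simultaneously. By monotonicity of the H\"older and $\beta$-variation conditions in $\beta$, it suffices to handle rational exponents, so I fix rational triples $(\beta, c, s)$ and consider the sets
\[
E_1(\beta, c, s) = \bigl\{f \in C_1^\alpha : \exists A \subseteq [0,1],\ \dim_\iH A \geq s,\ |f(x)-f(y)| \leq c|x-y|^\beta \text{ on } A\bigr\}
\]
with $\alpha < \beta \leq 1$ and $s \in \Q \cap (1-\beta, 1]$, together with the analogous $E_2(\beta, c, s)$ with $V^\beta(f|_A) \leq c$, for $0 < \beta < 1/\alpha$ and $s \in \Q \cap (\max\{\alpha\beta, \beta/(\beta+1)\}, 1]$. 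Defining $\iG$ as the intersection of the complements of $E_1(\beta,c,s) \cup E_2(\beta,c,s)$ over the countably many admissible rational triples reduces the theorem to showing each $E_i$ is meager in $C_1^\alpha$.

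Fix $s'$ rational strictly between the threshold and $s$. Since $\dim_\iH A \geq s$ implies $\iH^{s'}_\infty(A) > 0$, we have a countable decomposition $E_i \subseteq \bigcup_{N \geq 1} F_N$ with
\[
F_N = \bigl\{f : \exists \text{ compact } A,\ \iH^{s'}_\infty(A) \geq 1/N,\ \text{and the restriction hypothesis on } A\bigr\}.
\]
Each $F_N$ is closed: given $f_k \to f$ uniformly with compact witnesses $A_k$, pass to a Hausdorff limit $A$ via Blaschke selection; the finite H\"older (respectively partial-sum $\beta$-variation) inequalities survive in the limit by uniform continuity, and $\iH^{s'}_\infty$ is upper semicontinuous under Hausdorff convergence of compacta. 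To show $F_N$ is nowhere dense, given $f \in C_1^\alpha$ and $\varepsilon > 0$, I construct $g \in C_1^\alpha \setminus F_N$ with $\|g-f\|_\infty < \varepsilon$ by a two-scale perturbation: first mollify $f$ at scale $\delta$ to obtain $f_\delta \in C_1^\alpha$ that is $O(\delta^{\alpha-1})$-Lipschitz and $O(\delta^\alpha)$-close to $f$, introducing slack in the $C_1^\alpha$ constraint on scales well below $\delta$; then set $g := f_\delta + \lambda h_\eta$, where $h_\eta \in C_1^\alpha$ is a high-frequency oscillation built from rescaled copies (on intervals of length $\eta \ll \delta$, with amplitude $\eta^\alpha$) of a fixed rough base function $\phi \in C_1^\alpha$, and $\lambda \leq 1-(\eta/\delta)^{1-\alpha}$ is chosen so that $g \in C_1^\alpha$.

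The most delicate step is to verify that this $g$ genuinely avoids $F_N$. On each interval $J$ of length $\eta$, the Lipschitz part $f_\delta$ is $\beta$-H\"older with a constant that vanishes as $\eta \to 0$, so the restriction hypothesis on $A$ transfers to $\lambda h_\eta$ on $A \cap J$; rescaling $J$ back to $[0,1]$ reduces the question to a corresponding restriction of $\phi$ with H\"older constant of order $\eta^{\beta-\alpha} \to 0$. A naive invocation of Theorem~\ref{t:sa} only yields the bound $\dim_\iH(A\cap J) \leq 1-\alpha$, which is insufficient. The main obstacle is to choose $\phi$ (for example a multi-scale Weierstrass-type function, or to iterate a self-affine construction across all sub-scales of $J$) so that the vanishing H\"older constant forces the dimension bound to tighten to $1-\beta$; equivalently, a Frostman energy argument must convert the shrinking constant $\eta^{\beta-\alpha}$ into an improvement of $\beta - \alpha$ in the dimension exponent. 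The parallel argument for (2), using Theorem~\ref{t:A} and the same multi-scale $\phi$, produces the bound $\max\{\alpha\beta, \beta/(\beta+1)\}$ for the $\beta$-variation restriction, completing the nowhere-density of $F_N$ and hence the meagerness of $E_i$.
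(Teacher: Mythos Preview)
Your Baire-category framework is sound and structurally close to the paper's, but the proof has a genuine gap at exactly the point you flag as ``the main obstacle'': you have not shown how the vanishing H\"older constant $\eta^{\beta-\alpha}$ can be converted into the improved dimension bound $1-\beta$. In fact it cannot, if you insist on a base function $\phi$ of H\"older exponent $\alpha$. To see why, take $\phi$ to be any self-affine function of exponent $\alpha$ as in Section~\ref{s:KK}; its zero set $\iZ$ satisfies $\dim_\iH\iZ = 1-\alpha$, and $\phi|_{\iZ}\equiv 0$ is $\beta$-H\"older with constant $0$. So no matter how small the H\"older constant on the rescaled piece, you cannot beat $1-\alpha$. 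The suggested fixes (multi-scale Weierstrass, iterated self-affine, Frostman energy) are too vague to close this gap, and the zero-set example shows that at least the most direct interpretation of them fails.

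The paper's resolution is simple but essential: do \emph{not} build the perturbation from a function of exponent $\alpha$. Instead, for each target $\beta$ choose $\gamma\in(\alpha,\beta)$ close to $\beta$ and use as the high-frequency piece a self-affine function $f_{k_0,m_0}$ of exponent $\gamma$ (which is still in $C_1^{\alpha}$ since $\gamma>\alpha$). A direct counting of elementary intervals then shows that on each small piece, any $\beta$-H\"older function can agree with the perturbed $g$ only on a set of dimension at most $1-\gamma$, which can be pushed arbitrarily close to $1-\beta$. The paper also organizes part~(1) slightly differently, working with level sets $\{f=g\}$ for $g\in C_1^{\beta}$ rather than with restriction sets directly; this is equivalent via the H\"older extension Lemma~\ref{l:ext}.

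For part~(2) your ``parallel argument'' inherits the same gap, and the paper avoids a direct construction entirely: it reduces (2) to (1) via a decomposition lemma (Lemma~\ref{l:BV}) asserting that any set on which $f$ has finite $\beta$-variation splits into countably many pieces on which $f$ is $\gamma$-H\"older, plus a remainder of dimension at most $\gamma\beta$. Choosing $\gamma>\max\{\alpha,1/(\beta+1)\}$ and applying part~(1) to the H\"older pieces gives the claimed bound.
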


\section{Preliminaries}
\label{S:background}

Let $A\subset [0,1]$ be non-empty and $\alpha>0$. A function $f\colon A\to\R$ is called (uniformly)
\emph{$\alpha$-H\"older continuous} if there
exists a constant $c \in (0,\infty)$ such that $|f(x)-f(y)|\leq
c|x-y|^{\alpha}$ for all $x,y\in A$.
For the definitions of $C[0,1]$, $C^{\alpha}[0,1]$, and $C^{\alpha}_1[0,1]$ see Section~\ref{s:rel}.
The diameter of $A$ is denoted by $\diam A$. For all $s\geq 0$ the \emph{$s$-Hausdorff content} of $A$ is
\begin{equation*}
\mathcal{H}^{s}_{\infty}(A)=\inf \left\{ \sum_{i=1}^\infty (\diam A_{i})^{s}: A \subset \bigcup_{i=1}^{\infty} A_{i}\right\}.
\end{equation*}
The \emph{Hausdorff dimension} of $A$ is defined as
\[
\dim_\iH A = \inf\{s \ge 0: \mathcal{H}^{s}_{\infty}(A)=0\}.
\]
Let $|F|$ denote the cardinality of the set $F$. Let $M\geq 2$ be an integer. For all $n\in \N$ and $A\subset [0,1]$ define
\begin{align} \label{eq:Nn}
  \iD_n(M)&=\left\{[pM^{-n},(p+1)M^{-n}): p\in \{0,\dots,M^n-1\}\right\}, \notag\\
  \iD_n(A,M)&=\{I\in \iD_n(M): I\cap A\neq \emptyset\}, \\
  N_n(A,M)&=|D_n(A,M)|. \notag
\end{align}
The \emph{upper Minkowski dimension} of $A$ is defined as
\[
\overline{\dim}_\iM A=\limsup_{n\to \infty} \frac{\log N_n(A,M)}{n \log M}.
\]
It is easy to show that this definition is independent of the choice of $M$
and we have $\dim_\iH A\leq \overline{\dim}_\iM A$ for all $A\subset [0,1]$. For more on
these concepts see \cite{F1}.

\begin{definition} \label{d:sa}
A compact set $K\subset \R^2$ is called \emph{self-affine} if for some
$M\geq 2$ there are injective and contractive affine maps $F_1,\dots,F_M
\colon \R^2\to \R^2$ such that
\[
K = \bigcup_{i=1}^{M} F_i(K).
\]
A continuous function $f\in C[0,1]$ is \emph{self-affine} if
$\graph(f)\subset \R^2$ is a self-affine set.
\end{definition}

\begin{definition} \label{d:fBm} Let $0<\alpha<1$. The process $\{B(t):t\geq 0\}$ is called a
\emph{fractional Brownian motion of Hurst index $\alpha$} if
\begin{itemize}
\item $B$ is a Gaussian process with stationary increments;
\item $B(0)=0$ and $t^{-\alpha}B(t)$ has standard normal distribution for every $t>0$;
\item almost surely, the function $t\mapsto B(t)$ is continuous.
\end{itemize}
\end{definition}
The covariance function of $B$ is $\E(B(t)B(s))=(1/2)(|t|^{2\alpha}+|s|^{2\alpha}-|t-s|^{2\alpha})$.
It is well known that almost surely $B$ is $\gamma$-H\"older continuous for
all $\gamma<\alpha$, see \cref{l:ub} below. For more information see \cite[Chapter~8]{A} and \cite[Chapter~18]{Ka}.

Let $X$ be a \emph{complete} metric space. A set is \emph{somewhere dense} if
it is dense in a non-empty open set, otherwise it is \emph{nowhere dense}. We say that $A \subset X$ is
\emph{meager} if it is a countable union of nowhere dense sets, and
a set is called \emph{co-meager} if its complement is meager. By Baire's category theorem a set is co-meager iff it contains a dense
$G_\delta$ set. We say that the \emph{generic} element $x \in X$ has
property $\mathcal{P}$ if $\{x \in X : x \textrm{ has property }
\mathcal{P} \}$ is co-meager.

Let $(\mathcal{K}[0,1],d_{\iH})$ be the set of non-empty compact subsets of
$[0,1]$ endowed with the \emph{Hausdorff metric}, that is, for each $K_1,K_2\in \mathcal{K}[0,1]$ we have
\[d_{H}(K_1,K_2)=\min \left\{r: K_1\subset B(K_2,r) \textrm{ and } K_2\subset B(K_1,r)\right\},\]
where $B(A,r)=\{x\in \R: \exists y\in A \textrm{ such that } |x-y|\leq r\}$. Then $(\mathcal{K}[0,1],d_{H})$
is a compact metric space, see \cite{K} for more on this concept.

Let $\supp(\mu)$ stand for the support of the measure $\mu$.
For $x\in \R$ let $\lfloor x \rfloor$ and $\lceil x \rceil$ denote the lower and upper
integer part of $x$, respectively.

\section{Functions satisfying a scaled local time estimate} \label{s:slte}

In this section we prove restriction theorems for functions satisfying a
scaled local time estimate. First we need some notation.

\begin{definition}
Let $\alpha\in [0,1]$ and an integer $M\geq 2$ be fixed.
Let $n\in \N$ and $0\leq p\leq M^n-1$. A \emph{time interval of order $n$} is of the form
\[
I_{n,p} = [p M^{-n}, (p+1)M^{-n}).
\]
Let $q\in \Z$. A \emph{value interval of order $n$} is of the form
\[
J_{n,q} = [q M^{-\alpha n}, (q+1) M^{-\alpha n}).
\]
For all $0\leq m\leq n$ define
\[
\iI_{n,m,p}=\{I\in \iD_n(M): I\subset I_{m,p}\},
\]
where $\iD_n(M)$ is the set of time intervals of order $n$. Clearly, $|\iI_{n,m,p}|=M^{n-m}$.
\end{definition}

\begin{definition} \label{d:An}
For a function $f\colon [0,1]\to \R$ the \emph{scaled local time} $A_{n,m,p,q}(f)$ is the number of order $n$
intervals in $\iI_{n,m,p}$ in which $f$ takes at least one value in $J_{n,q}$:
\begin{equation*}
A_{n,m,p,q}(f)= \left|\{I\in\iI_{n,m,p} : \exists x\in I, ~ f(x)\in J_{n,q}\}\right|.
\end{equation*}
It is easy to see that if $f$ is $\alpha$-H\"older continuous then for every
$n,m,p$, for some $q$ we have $A_{n,m,p,q}(f) \geq c M^{(1-\alpha)(n-m)}$,
since the function cannot visit too many different value intervals in any
given time interval. Finally, for each $n\in \N^+$ define
\begin{equation*}
\iA_n(\alpha, M)=\{f: A_{n,m,p,q}(f)\leq n^2 M^{(1-\alpha)(n-m)} \textrm{ for all } m\leq n ,\, p< M^{n},\, q\in \Z \}.
\end{equation*}
\end{definition}
Thus the set $\iA_n(\alpha,M)$ includes $\alpha$-H\"older functions with scaled local
times which are not much larger than the minimal values possible given
their continuity. We shall see below that the self-affine functions we
define, as well as fractional Brownian motion belong (almost surely) to
this class.

The main goal of this section is to prove Theorems~\ref{t:Holder} and \ref{t:variation}.

\subsection{H\"older restrictions}

For the notation $\iA_n(\alpha,M)$ and $N_n(A,M)$ see Definition~\ref{d:An} and \eqref{eq:Nn}, respectively.

\begin{theorem} \label{t:Holder}
Let $M\geq 2$ be an integer and let $\alpha\in [0,1]$. Let $f\colon
[0,1]\to \R$ be a function such that $f\in \iA_n(\alpha,M)$ for all large enough $n$.
Assume that $\beta>\alpha$ and $A\subset [0,1]$ such that $f|_{A}$ is $\beta$-H\"older continuous. Then $\overline{\dim}_\iM A\leq 1-\alpha$.
Moreover,
\begin{equation} \label{eq:NAn1}
N_n(A,M)\leq M^{(1-\alpha)n+O(\log^2 n)}.
\end{equation}
\end{theorem}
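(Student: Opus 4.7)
My approach would combine the $\beta$-H\"older estimate on $f|_A$ with the scaled local time bound in a recursive two-scale covering argument. The key observation is that if $I_{m,p}$ is an order-$m$ time interval meeting $A$, then H\"older continuity of $f|_A$ gives $\diam f(A\cap I_{m,p})\leq c M^{-\beta m}$, so this image meets at most $\lceil c M^{\alpha n-\beta m}\rceil+1$ value intervals of order $n$; choosing $m$ with $\beta m\geq \alpha n$ makes this a bounded number $C=C(c)$. Combined with the bound $A_{n,m,p,q}(f)\leq n^2 M^{(1-\alpha)(n-m)}$ from $f\in\iA_n(\alpha,M)$, and summing over the $N_m(A,M)$ intervals $I_{m,p}$ that meet $A$, the natural choice $m=\lceil(\alpha/\beta)\,n\rceil$ yields the one-step recursion
\[
N_n(A,M) \;\leq\; C\,n^2 \cdot N_m(A,M)\cdot M^{(1-\alpha)(n-m)}.
\]

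Next I would iterate this estimate. Set $r=\alpha/\beta\in(0,1)$, $n_0=n$, $n_{k+1}=\lceil r n_k\rceil$, and apply the recursion as long as $n_k$ is above the threshold past which $f\in\iA_{n_k}(\alpha,M)$ is guaranteed. Because $r<1$ the sequence $n_k$ contracts geometrically, so this takes $K=O(\log n)$ steps before $n_K$ is bounded by a constant, at which point the trivial estimate $N_{n_K}(A,M)\leq M^{n_K}$ contributes only $O(1)$. Multiplying the single-step inequalities and telescoping the exponent $(1-\alpha)\sum_{k=0}^{K-1}(n_k-n_{k+1})=(1-\alpha)(n-n_K)\leq(1-\alpha)n$ yields
\[
\log_M N_n(A,M) \;\leq\; (1-\alpha)n + 2\sum_{k=0}^{K-1}\log_M n_k + O(K).
\]
Since each $n_k\leq n$, the sum of logarithms is at most $2K\log_M n=O(\log^2 n)$, producing~\eqref{eq:NAn1}. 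Dividing by $n\log M$ and taking the $\limsup$ then gives $\overline{\dim}_\iM A\leq 1-\alpha$.

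The main obstacle is controlling the accumulated error in the iteration: a single application of the recursion followed by the trivial bound $N_m(A,M)\leq M^m$ would only give $N_n(A,M)\leq M^{(1-\alpha)n+\Omega(n)}$, which is useless for Minkowski dimension, so one must iterate down to a constant scale. The choice $m=\lceil rn\rceil$ with $r=\alpha/\beta<1$ is what forces both a logarithmic number of iterations and a logarithmic contribution from each $n_k^2$ factor; pushing the iteration shallower or deeper than this trades these terms off against each other unfavourably. Equivalently, the same bound can be verified by strong induction on $n$ for the target $\log_M N_n(A,M)\leq (1-\alpha)n+C_1\log^2 n+C_2$, with $C_1$ chosen large enough relative to $-\log r$ to absorb the per-step error.
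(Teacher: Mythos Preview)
Your proof is correct and follows essentially the same approach as the paper. The paper likewise derives the one-step recursion $d_t \leq 2\, t^2\, d_s\, M^{(1-\alpha)(t-s)}$ for $s\leq t\leq \lfloor (\gamma/\alpha)s\rfloor$ (after fixing an intermediate $\alpha<\gamma<\beta$ and reducing to H\"older constant~$1$), and then iterates it along the geometric sequence $m_{i+1}=\lfloor (\gamma/\alpha)\,m_i\rfloor$ bottom-up rather than top-down, obtaining the same $O(\log^2 n)$ error from the $O(\log n)$ factors of size $O(\log n)$.
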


\begin{proof}
Assume that $A\subset [0,1]$ and $\alpha<\gamma<\beta$ are fixed such that
$f|_A$ is $\beta$-H\"older continuous. Choose $N\in \N^+$ such that $f\in \iA_n(\alpha,M)$
for all $n\geq N$. Clearly it is enough to prove \eqref{eq:NAn1}. By decomposing $A$ into
finitely many pieces of small enough diameters, we may assume that
$f|_{A}$ is $\gamma$-H\"older continuous with H\"older constant $1$, that is,
for all $x,y\in A$ we have
\begin{equation} \label{eq:Ho} |f(x)-f(y)|\leq |x-y|^{\gamma}. \end{equation}
For all $n\in \N$ let
\[
d_n=N_n(A,M).
\]
Let $c=\gamma/\alpha>1$. Assume that $s,t\in \N$ such that $s\leq t \leq \lfloor cs \rfloor$ and $t\geq N$. Now we will prove that
\begin{equation} \label{eq:di}
d_{t}\leq 2d_s t^2 M^{(1-\alpha)(t-s)}.
\end{equation}
Let us fix an arbitrary $I_{s,p}\in \iD_{s}(A,M)$ for some $p$. As
$|\iD_{s}(A,M)|=d_s$, in order to show \eqref{eq:di} it is enough to
prove that
\begin{equation} \label{eq:Iprime}
|\{I\in \iD_t(A,M): I\subset I_{s,p}\}|\leq 2 t^2 M^{(1-\alpha)(t-s)}.
\end{equation}
Inequality~\eqref{eq:Ho} yields $\diam f(I_{s,p}\cap A)\leq M^{-\gamma s}\leq M^{-\alpha t}$, therefore $f(I_{s,p}\cap A)\subset J_{t,q}\cup
J_{t,q+1}$ for some $q\in \Z$. As $f\in \iA_t(\alpha,M)$, we have $A_{t,s,p,q+j}(f)\leq t^2 M^{(1-\alpha)(t-s)}$ for $j\in \{0,1\}$, which
yields \eqref{eq:Iprime}. Hence \eqref{eq:di} follows.

Fix an integer $m_0\geq \max\{N,c/(c-1)\}$ and let $n$ be an arbitrary integer such that
$n>m_0$. For all $i\in \N^+$ let $m_{i}=\min\{n, \lfloor cm_{i-1} \rfloor\}$.
Let $k$ be the minimal number such that $m_{k+1}=n$. Note
that $c\ell >\ell+1$ for every $\ell \geq m_0$, thus such a $k$ exists.
Then the recursion and $m_0\geq c/(c-1)$ yield that
\[
n\geq m_{k} \geq c^{k}m_0-\sum_{i=0}^{k-1} c^i=c^k(m_0-1/(c-1))\geq c^k,
\]
therefore $k\leq \log n/\log c$. Applying \eqref{eq:di} repeatedly and
using that $d_{m_0}\leq M^{m_0}$ and $m_{i+1}\leq cm_i$ we obtain that
\begin{align*}
d_n&\leq d_{m_0}\prod_{i=1}^{k+1} 2 m^2_i M^{(1-\alpha)(m_i-m_{i-1})} \\
&\leq M^{(1-\alpha)n} M^{m_0+k+1} m^2_0 c^{2(1+\dots +(k+1))} \\
&\leq M^{(1-\alpha)n+O(k^2)} \leq M^{(1-\alpha)n+O(\log^2 n)}.
\end{align*}
Hence \eqref{eq:NAn1} follows, and the proof is complete.
\end{proof}

\subsection{Restrictions of finite $\beta$-variation}

The notation $\iA_n(\alpha,M)$ and $N_n(A,M)$ are given in Definition~\ref{d:An} and \eqref{eq:Nn}, respectively.

\begin{theorem} \label{t:variation}
Let $M\geq 2$ be an integer, $\alpha\in [0,1]$ and $\beta>0$. Let
$f\colon [0,1]\to \R$ be a function such that $f\in \iA_n(\alpha,M)$ for all large enough $n$.
Assume that $A\subset[0,1]$ is such that
$f|_{A}$ has finite $\beta$-variation.
Then $\overline{\dim}_\iM A \leq \max\{1-\alpha,\alpha \beta\}=:\gamma$. Moreover,
\begin{equation} \label{eq:NAn2}
N_n(A,M)\leq M^{\gamma n+O(\sqrt{n\log n})}.
\end{equation}
\end{theorem}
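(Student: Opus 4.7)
The plan is to mimic the proof of \cref{t:Holder}, with the role of ``$\beta$-Hölder continuity on $A$'' replaced by a dichotomy that isolates the order-$m$ time intervals carrying a large share of $f$'s $\beta$-variation. Write $V=V^\beta(f|_A)<\infty$ and fix $N\in\N$ with $f\in\iA_n(\alpha,M)$ for all $n\geq N$. For $N\leq m<n$ and each $I_{m,p}\in\iD_m(A,M)$, set $V_p=V^\beta(f|_{A\cap I_{m,p}})$ and $R_p=\diam f(A\cap I_{m,p})$. Super-additivity of $\beta$-variation over disjoint time intervals gives $\sum_p V_p\leq V$, and a two-point chain gives $R_p^\beta\leq V_p$.

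First I would fix a threshold $\tau>0$ (to be optimized below) and call $I_{m,p}$ \emph{good} if $V_p\leq\tau$ and \emph{bad} otherwise. Markov's inequality then produces at most $V/\tau$ bad intervals. For a good $I_{m,p}$ one has $R_p\leq\tau^{1/\beta}$, so $f(A\cap I_{m,p})$ meets at most $\tau^{1/\beta}M^{\alpha n}+1$ order-$n$ value intervals, and the hypothesis $f\in\iA_n(\alpha,M)$ guarantees that each such value interval is met by $f$ within at most $n^2 M^{(1-\alpha)(n-m)}$ of the order-$n$ subintervals of $I_{m,p}$. Bounding each bad interval trivially by $M^{n-m}$ order-$n$ subintervals and summing yields
\begin{equation}\label{eq:planrec}
N_n(A,M)\leq N_m(A,M)\bigl(\tau^{1/\beta}M^{\alpha n}+1\bigr)n^2 M^{(1-\alpha)(n-m)}+\frac{V}{\tau}M^{n-m}.
\end{equation}

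Optimizing \eqref{eq:planrec} in $\tau$ by balancing the $\tau^{1/\beta}$- and $\tau^{-1}$-terms, and writing $L(n)=\log_M N_n(A,M)$, $\lambda=\beta/(\beta+1)$, $\kappa=(\beta(1-\alpha)+1)/(\beta+1)$, I obtain the recursive bound
\[
L(n)\leq \max\bigl\{\lambda L(m)+n-\kappa m,\ L(m)+(1-\alpha)(n-m)\bigr\}+O(\log n).
\]
Letting $m/n\to 1$, the two alternatives have respective fixed points $\alpha\beta$ and $1-\alpha$, the first via the identity $\lambda\alpha\beta-\kappa=\alpha\beta-1$. This singles out $\gamma=\max\{1-\alpha,\alpha\beta\}$ as the correct exponent. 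I would then prove by strong induction in $n$ that $N_n(A,M)\leq M^{\gamma n+C\sqrt{n\log n}}$ for a sufficiently large constant $C$, choosing at each step $m=n-\lceil\sqrt{n\log n}\rceil$. Substituting the inductive hypothesis into the first alternative, the identity above forces the $n$-linear contributions to cancel down to an excess of $\lceil\sqrt{n\log n}\rceil(1-\alpha\beta)$; using $\sqrt{m\log m}\leq\sqrt{n\log n}$ this closes once $C(1-\lambda)>1-\alpha\beta$, i.e., $C>(\beta+1)(1-\alpha\beta)$. The second alternative is easier because $\gamma\geq 1-\alpha$ keeps its additive correction $\leq 0$ at leading order. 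Dividing by $n\log M$ and taking $\limsup$ then yields $\overline{\dim}_\iM A\leq\gamma$.

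The main obstacle is the error bookkeeping. The $n^2$ factor in the local-time bound contributes an $O(\log n)$ multiplicative slack per iteration, and the step size $\lceil\sqrt{n\log n}\rceil$ has to be calibrated precisely so that the roughly $n/\sqrt{n\log n}$ iterations one needs to reach the base case accumulate only the advertised $O(\sqrt{n\log n})$ loss, matching the rate in \eqref{eq:NAn2}; any cruder step size produces a worse error like $O(\sqrt{n}\log n)$ or $O(n/\log n)$.
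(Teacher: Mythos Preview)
Your approach is sound and leads to the stated bound, but it is genuinely different from the paper's argument. You control each order-$m$ time interval via the \emph{diameter} of $f(A\cap I_{m,p})$: since $R_p^\beta\le V_p$, a small variation forces a small range, hence few order-$n$ value intervals are hit, and the local-time hypothesis then bounds the subinterval count. Intervals with large $V_p$ are handled by Markov's inequality, and you optimize the threshold $\tau$. The paper instead proves a direct \emph{lower bound} on $V^\beta(f|_{A\cap I})$ in terms of the number $r$ of order-$t$ subintervals of $I$ meeting $A$: it selects points $x_i\in A$ whose $f$-values lie in value intervals of alternating parity, and uses the local-time bound to force frequent changes of value interval, yielding at least $r/(2t^2 M^{(1-\alpha)(t-s)})-1$ increments of size $\ge M^{-\alpha t}$. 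Summing this lower bound over all order-$s$ intervals and using $\sum V^\beta(f|_{A\cap I_i})\le V$ produces a recursion that is additive in $N_s(A,M)$ rather than a $\max$ of two branches; the paper also first reduces to the regime $\alpha\beta\ge 1-\alpha$, so only the exponent $\alpha\beta$ appears.

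The iteration schemes also differ: you run strong induction with step $n-m=\lceil\sqrt{n\log n}\rceil$, while the paper fixes a step $m\approx\sqrt{n\log n}$ and iterates $k\approx\sqrt{n/\log n}$ times from $0$ to $km\ge n$. Both yield the $O(\sqrt{n\log n})$ error. Your dichotomy argument is perhaps more modular (threshold $+$ Markov is a reusable template); the paper's jump-extraction is sharper in that it avoids the optimization over $\tau$ and gives a cleaner additive recursion. One small point to make explicit when you write this out: closing the second branch of your induction requires the estimate $\sqrt{n\log n}-\sqrt{m\log m}\gtrsim\log n$ for $m=n-\lceil\sqrt{n\log n}\rceil$, so that the $O(\log n)$ overhead is absorbed; this holds, but deserves a line.
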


\begin{proof}
If the theorem holds for $\beta =(1-\alpha)/\alpha$, then
it holds for every $\beta<(1-\alpha)/\alpha$. Thus we may
assume that $\beta\geq (1-\alpha)/\alpha$, so
$\gamma = \max\{1-\alpha, \alpha\beta\} = \alpha \beta$. Suppose that $A\subset
[0,1]$ such that $f|_A$ has finite $\beta$-variation. Choose $N\in \N^+$
such that $f\in \iA_n(\alpha,M)$ for all $n\geq N$. Clearly it is
enough to prove \eqref{eq:NAn2}. By decomposing $A$ into finitely many
pieces of small enough diameters, we may assume that the $\beta$-variation of $f|_A$
is at most $1$, that is,
\begin{equation} \label{eq:Var}
V^{\beta}(f|_A)\leq 1.
\end{equation}
Let $s,t\in \N$ such that $s<t$ and $t\geq N$. Assume that
$I=I_{p,s}\in \iD_s(A,M)$ and $I$ contains $r$ sub-intervals in $\iD_t(A,M)$. First we prove that
\begin{equation} \label{eq:VB}
V^{\beta}(f|_{A\cap I})\geq \left(\frac{r}{2t^2 M^{(1-\alpha)(t-s)}}-1\right) M^{-\alpha \beta t}.
\end{equation}

Let $\{Q_1,Q_2,\dots,Q_m\}$ be the sub-intervals of $I$ in $\iD_t(A,M)$
such that for every $1\leq i\leq m$ there is an \emph{even} $q_i\in \Z$
such that $f(Q_i\cap A)\cap J_{t,q_i}\neq \emptyset$. We may assume that
$m\geq r/2$, otherwise we switch to odd integers $q_i$ and repeat the
same proof. For all $i\in \{1,\dots,m\}$ choose an $x_i\in Q_i\cap A$ such that
$f(x_i)\in J_{t,q_i}$. We may assume that $x_i<x_j$ whenever $i<j$. Let
$j_1=1$, and if $j_\ell \in \{1,\dots, m\}$ is defined then let
$j_{\ell+1}=\min\{u>j_\ell: q_u\neq q_{j_\ell}\}$ if the minimum exists.
As $f\in \iA_t(\alpha,M)$, for all $i,j\in \{1,\dots,m\}$ we have
\begin{equation*}
|\{i: f(x_i)\in J_{t,q_{j}}\}|\leq A_{t,s,p,q_{j}}(f)\leq t^2 M^{(1-\alpha)(t-s)},
\end{equation*}
so if $j_\ell \leq m-t^2 M^{(1-\alpha)(t-s)}$ then $j_{\ell+1}$ is
defined and $j_{\ell+1}\leq j_{\ell}+ t^2 M^{(1-\alpha)(t-s)}$. Thus the
length of the defined sequence $j_1<\dots <j_k$ satisfies $k\geq r(2t^2
M^{(1-\alpha)(t-s)})^{-1}$. By construction
$|f(x_{j_{\ell+1}})-f(x_{j_\ell})| \geq M^{-\alpha t}$ for all $\ell<k$, which implies \eqref{eq:VB}.

Index the elements $\iD_s(A,M)=\{I_1,\dots, I_{N_s(A,M)}\}$, and
assume that each $I_i$ contains $r_i$ intervals of $\iD_t(A,M)$, so
$\sum_{i=1}^{N_s(A,M)} r_i=N_t(A,M)$. Inequality \eqref{eq:VB} yields that
\begin{align*}
1&\geq V^{\beta}(f|_{A})\geq \sum_{i=1}^{N_s(A,M)} V^{\beta}(f|_{A\cap I_i})  \\
&\geq \sum_{i=1}^{N_s(A,M)} \left(\frac{r_i}{2t^2 M^{(1-\alpha)(t-s)}}-1\right) M^{-\alpha \beta t} \\
&=\left(\frac{N_t(A,M)}{2t^2 M^{(1-\alpha)(t-s)}}-N_s(A,M)\right)M^{-\alpha \beta t}.
\end{align*}
Therefore
\begin{equation} \label{eq:iD}
\frac{N_t(A,M)}{(2t)^2 M^{(1-\alpha)t}}-\frac{N_s(A,M)}{M^{(1-\alpha)s}}\leq \frac{M^{\alpha \beta t}}{M^{(1-\alpha)s}}.
\end{equation}
Now assume that $m,k\in \N^+$ are fixed such that and $m\geq N$, we prove
that
\begin{equation} \label{eq:Dkm}
N_{km}(A,M)\leq M^{\alpha \beta km}(2km)^{2k}\left(1+kM^{(1-\alpha)m}\right).
\end{equation}
For all $0\leq i\leq k$ let
\[
d_i=\frac{N_{im}(A,M)}{(2km)^{2i} M^{(1-\alpha)im}}.
\]
 Applying Inequality~\eqref{eq:iD} for $t=im$ and $s=(i-1)m$, and using
 that $t\leq km$ and $2km\geq 1$ imply that for every $1\leq i\leq k$ we
 have
\begin{equation*}
 d_i-d_{i-1}\leq \frac{M^{\alpha \beta im}}
{M^{(1-\alpha)(i-1)m}(2km)^{2i-2}}\leq M^{(1-\alpha)m} M^{(\alpha \beta-(1-\alpha))im}.
\end{equation*}
As $\alpha \beta\geq 1-\alpha$, the above inequality implies that
\begin{equation} \label{eq:sum}
d_k-d_0=\sum_{i=1}^k (d_i-d_{i-1}) \leq kM^{(1-\alpha)m} M^{(\alpha \beta -(1-\alpha))km}.
\end{equation}
Then $d_0=1$, $\alpha\beta \geq 1-\alpha$ and \eqref{eq:sum} imply \eqref{eq:Dkm}.

Finally, let $n$ be an arbitrary integer with $n>N^2$.
Let $m=\lceil \sqrt{n \log n} \rceil\geq N$ and $k=\lceil \sqrt{n /\log n} \rceil$,
then $km\geq n$, so $N_n(A,M)\leq N_{km}(A,M)$. Applying \eqref{eq:Dkm} for $k,m$
easily yields that
\begin{equation*}
  N_n(A,M)\leq N_{km}(A,M)\leq M^{\alpha \beta n+O(\sqrt{n\log n})}.
\end{equation*}
As $\gamma=\alpha \beta$, inequality \eqref{eq:NAn2} follows. The proof is complete.
\end{proof}


\section{Self-affine functions and the proof of \cref{t:answer}}
\label{s:KK}

The main goal of this section is to prove \cref{t:sa,t:answer}.
First we define a family of self-affine functions $f_{k,m}$, which will be used in Section~\ref{s:generic} as well.

Let $k,m\geq 2$ be fixed integers such that $m$ is odd. For every $i\in
\{0,\dots,m-1\}$ and $j\in \{0,\dots,k-1\}$ define
the one-to-one affine map $F_{ik+j}: \R^2\to \R^2$ as
\[
F_{ik+j}(x,y) = \left(\frac{x+ik+j}{km}, (-1)^{i}\frac{y+j}{k} +
  (i\bmod 2)\right),
\]
see Figure~\ref{f:maps}. As the $F_{ik+j}$ are contractions, Hutchinson's contraction
mapping theorem \cite[Page~713~(1)]{H} implies that there is a unique,
non-empty compact set $K\subset \R^2$ such that
\[
K=\bigcup_{\ell=0}^{km-1} F_{\ell}(K).
\]
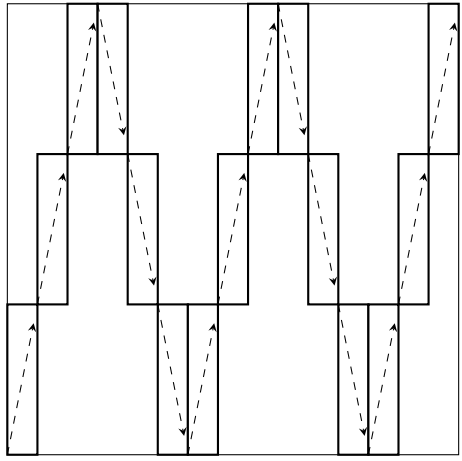
\begin{figure}
\begin{tikzpicture}[>=stealth]
\draw [fill=white] (0,0) rectangle (6,6);

\draw [fill=white, thick] (0,0) rectangle (0.4,2);
\draw [fill=white, thick] (0.4,2) rectangle (0.8,4);
\draw [fill=white, thick]  (0.8,4) rectangle (1.2,6);

\draw [->, dashed] (0,0) -- (0.35,1.75);
\draw [->, dashed] (0.4,2) -- (0.75,3.75);
\draw [->, dashed] (0.8,4) -- (1.15,5.75);

\draw [fill=white, thick] (1.2,4) rectangle (1.6,6);
\draw [fill=white, thick]  (1.6,2) rectangle (2,4);
\draw [fill=white, thick]  (2,0) rectangle (2.4,2);

\draw [->, dashed] (1.2,6) -- (1.55,4.25);
\draw [->, dashed] (1.6,4) -- (1.95,2.25);
\draw [->, dashed] (2,2) -- (2.35,0.25);

\draw [fill=white, thick]  (2.4,0) rectangle (2.8,2);
\draw [fill=white, thick]  (2.8,2) rectangle (3.2,4);
\draw [fill=white, thick]  (3.2,4) rectangle (3.6,6);

\draw [->, dashed] (2.4,0) -- (2.75,1.75);
\draw [->, dashed] (2.8,2) -- (3.15,3.75);
\draw [->, dashed] (3.2,4) -- (3.55,5.75);

\draw [fill=white, thick]  (3.6,4) rectangle (4,6);
\draw [fill=white, thick]  (4,2) rectangle (4.4,4);
\draw [fill=white, thick]  (4.4,0) rectangle (4.8,2);

\draw [->, dashed] (3.6,6) -- (3.95,4.25);
\draw [->, dashed] (4,4) -- (4.35,2.25);
\draw [->, dashed] (4.4,2) -- (4.75,0.25);

\draw [fill=white, thick] (4.8,0) rectangle (5.2,2);
\draw [fill=white, thick]   (5.2,2) rectangle (5.6,4);
\draw [fill=white, thick]   (5.6,4) rectangle (6,6);

\draw [->, dashed] (4.8,0) -- (5.15,1.75);
\draw [->, dashed] (5.2,2) -- (5.55,3.75);
\draw [->, dashed] (5.6,4) -- (5.95,5.75);
\end{tikzpicture}
\caption{For $k=3$ and $m=5$ the illustration shows how the family $\{F_i\}_{i=0}^{14}$ maps $[0,1]^2$ onto rectangles.}
\label{f:maps}
\end{figure}

 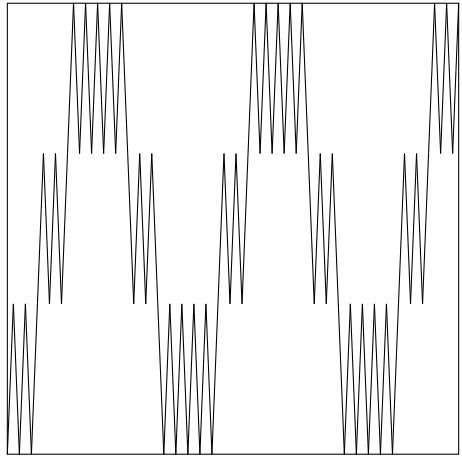
\begin{figure}
 \begin{tikzpicture}
 \draw [fill=white] (0,0) rectangle (6,6);

 \draw [-] (0,0) -- (0.08,2);
 \draw [-] (0.08,2) -- (0.16,0);
 \draw [-] (0.16,0) -- (0.24,2);
 \draw [-] (0.24,2) -- (0.32,0);
 \draw [-] (0.32,0) -- (0.4,2);
 
\draw [-] (0.4,2) -- (0.48,4);
\draw [-] (0.48,4) -- (0.56,2);
\draw [-] (0.56,2) -- (0.64,4);
\draw [-] (0.64,4) -- (0.72,2);
\draw [-] (0.72,2) -- (0.8,4);

\draw [-] (0.8,4) -- (0.88,6);
\draw [-] (0.88,6) -- (0.96,4);
\draw [-] (0.96,4) -- (1.04,6);
\draw [-] (1.04,6) -- (1.12,4);
\draw [-] (1.12,4) -- (1.2,6);

 \draw [-] (2.4,0) -- (2.48,2);
 \draw [-] (2.48,2) -- (2.56,0);
  \draw [-] (2.56,0) -- (2.64,2);
  \draw [-] (2.64,2) -- (2.72,0);
  \draw [-] (2.72,0) -- (2.8,2);
  
  \draw [-] (2.8,2) -- (2.88,4);
  \draw [-] (2.88,4) -- (2.96,2);
  \draw [-] (2.96,2) -- (3.04,4);
  \draw [-] (3.04,4) -- (3.12,2);
  \draw [-] (3.12,2) -- (3.2,4);
  
  \draw [-] (3.2,4) -- (3.28,6);
  \draw [-] (3.28,6) -- (3.36,4);
  \draw [-] (3.36,4) -- (3.44,6);
  \draw [-] (3.44,6) -- (3.52,4);
  \draw [-] (3.52,4) -- (3.6,6);

 \draw [-] (4.8,0) -- (4.88,2);
 \draw [-] (4.88,2) -- (4.96,0);
 \draw [-] (4.96,0) -- (5.04,2);
 \draw [-] (5.04,2) -- (5.12,0);
 \draw [-] (5.12,0) -- (5.2,2);
 
  \draw [-] (5.2,2) -- (5.28,4);
  \draw [-] (5.28,4) -- (5.36,2);
  \draw [-] (5.36,2) -- (5.44,4);
  \draw [-] (5.44,4) -- (5.52,2);
  \draw [-] (5.52,2) -- (5.6,4);
 
 \draw [-] (5.6,4) -- (5.68,6);
 \draw [-] (5.68,6) -- (5.76,4);
 \draw [-] (5.76,4) -- (5.84,6);
 \draw [-] (5.84,6) -- (5.92,4);
 \draw [-] (5.92,4) -- (6,6);

\draw [-] (1.2,6) -- (1.28,4);
\draw [-] (1.28,4) -- (1.36,6);
\draw [-] (1.36,6) -- (1.44,4);
\draw [-] (1.44,4) -- (1.52,6);
\draw [-] (1.52,6) -- (1.6,4);

\draw [-] (1.6,4) -- (1.68,2);
\draw [-] (1.68,2) -- (1.76,4);
\draw [-] (1.76,4) -- (1.84,2);
\draw [-] (1.84,2) -- (1.92,4);
\draw [-] (1.92,4) -- (2,2);

\draw [-] (2,2) -- (2.08,0);
\draw [-] (2.08,0) -- (2.16,2);
\draw [-] (2.16,2) -- (2.24,0);
\draw [-] (2.24,0) -- (2.32,2);
\draw [-] (2.32,2) -- (2.4,0);

\draw [-] (3.6,6) -- (3.68,4);
\draw [-] (3.68,4) -- (3.76,6);
\draw [-] (3.76,6) -- (3.84,4);
\draw [-] (3.84,4) -- (3.92,6);
\draw [-] (3.92,6) -- (4,4);

\draw [-] (4,4) -- (4.08,2);
\draw [-] (4.08,2) -- (4.16,4);
\draw [-] (4.16,4) -- (4.24,2);
\draw [-] (4.24,2) -- (4.32,4);
\draw [-] (4.32,4) -- (4.4,2);

\draw [-] (4.4,2) -- (4.48,0);
\draw [-] (4.48,0) -- (4.56,2);
\draw [-] (4.56,2) -- (4.64,0);
\draw [-] (4.64,0) -- (4.72,2);
\draw [-] (4.72,2) -- (4.8,0);

 \end{tikzpicture}
\caption{The third level approximation of the self-affine function $f_{3,5}$.}
\label{f:new}
\end{figure}

It is easy to see that $K$ is a graph of a function $f_{k,m}\colon [0,1]\to
[0,1]$, and $f_{k,m}$ can be approximated as follows.
Let $D=\{(x,x): x\in [0,1]\}$ be the diagonal of $[0,1]^2$ and define
$\Gamma \colon \iP(\R^2) \to \R$ as
\[\Gamma(A)=\bigcup_{\ell=0}^{km-1} F_{\ell}(A).\]
If $\Gamma^{i}$ denotes the $i^{\textrm{th}}$ iterate $\Gamma \circ \dots \circ \Gamma$, then $\Gamma^{i}(D)$ is the graph of a piecewise linear function $f^{i}_{k,m}$ which converges uniformly to $f_{k,m}$ as $i\to \infty$. Clearly $f_{k,m}$ is a self-affine function with $f_{k,m}(0)=0$ and $f_{k,m}(1)=1$, and the definition yields that $f_{k,m}$ is H\"older continuous with exponent $\log k/\log (km)$. Figure~\ref{f:new} shows the piecewise linear function $f^3_{3,5}$, which approximates $f_{3,5}$.

Define the set
\[
\Delta=\left\{\frac{\log k}{\log (km)}: k,m\geq 2 \textrm{ are integers and $m$ is odd}\right\}.
\]
Then $\Delta$ is a countable dense subset of $(0,1)$, since every rational $p/q\in(0,1)$ is in $\Delta$ by
$k=3^p$ and $m=3^{q-p}$. For all
$\alpha\in \Delta$ fix $k,m$ such that $\alpha=\log k/\log (km)$,
and define $f_\alpha=f_{k,m}\in C^{\alpha}[0,1]$.

\begin{proof}[Proof of \cref{t:sa}]
Fix $\alpha=\log k/\log (km)\in \Delta$ such that
$f_\alpha=f_{k,m}$. We use the scaled local times with $M=km$.
By \cref{t:Holder,t:variation} it is enough to show that
$f_{\alpha} \in \iA_n(\alpha, M)$ for all $n\geq 2$.
Let us fix $n\geq 2$. Clearly $M^{1-\alpha}=m$, and the construction of $f_\alpha$ yields
that for every $\ell \leq n$, $p< M^{\ell}$, and $0\leq q<k^n$ we have
\begin{equation*}
|I\in \iI_{n,\ell,p}: ~\exists x\in I,~f_{\alpha}(x)\in (qk^{-n},(q+1)k^{-n})| =
 m^{n-\ell}=M^{(1-\alpha)(n-\ell)}.
\end{equation*}
Similarly,
\begin{equation*}
  |I\in \iI_{n,\ell,p}: ~qk^{-n}\in  f_{\alpha}(I)| \leq 2 m^{n-\ell} =
  2M^{(1-\alpha)(n-\ell)}.
\end{equation*}
The above and $M^{-\alpha}=1/k$ yield that for all $\ell,p,q$ we have
\begin{equation*}
A_{n,\ell,p,q}(f_{\alpha})\leq 3M^{(1-\alpha)(n-\ell)}.
\end{equation*}
Thus $f_{\alpha} \in \iA_n(\alpha, M)$, and the proof is complete.
\end{proof}

\begin{remark}
Note that the bound we get for $A_{n,\ell,p,q}(f_{\alpha})$ does not use the $n^2$
factor. It is possible to go through the proof \cref{t:Holder} with such a
stronger assumption, which would slightly improve the bounds on $N_n(A,M)$
with $O(\log n)$ in place of $O(\log^2 n)$ error term.
\end{remark}

\begin{proof}[Proof of \cref{t:answer}]
Let $0<\alpha<1$. By \cref{t:KKM,t:M,t:A} it is enough to prove that
\begin{align*}
\overline{H}(\alpha,\beta)&\leq 1-\beta \textrm{ for every } \alpha<\beta\leq 1; \\
\overline{V}(\alpha,\beta)&\leq \max\left\{\alpha\beta, \frac{\beta}{\beta+1}\right\} \textrm{ for all } \beta>0.
\end{align*}

For the first inequality let $\gamma\in\Delta\cap (\alpha,\beta)$ be arbitrary,
then $f_\gamma\in C^{\alpha}[0,1]$. Suppose that $f_{\gamma}$
is $\beta$-H\"older continuous on some set $A\subset [0,1]$.
Theorem~\ref{t:sa}~\eqref{eq:sa1} yields that $\overline{\dim}_\iM A\leq 1-\gamma$, thus
$\overline{H}(\alpha,\beta) \leq 1-\gamma$. Since $\Delta$ is dense in $(\alpha,\beta)$, we obtain that
$\overline{H}(\alpha,\beta)\leq 1-\beta$.

For the second inequality define $\delta=\max\{\alpha,1/(\beta+1)\}$ and let $\gamma\in \Delta\cap (\delta,1)$ be arbitrary.
Then $f_\gamma\in C^{\alpha}[0,1]$. Assume that $f_{\gamma}$ has finite $\beta$-variation on some
$A$. Theorem~\ref{t:sa}~\eqref{eq:sa2} and $\gamma>\delta$ imply that
$\overline{\dim}_\iM A\leq \max\{1-\gamma,\gamma \beta\}=\gamma \beta$, so
$\overline{V}(\alpha,\beta)\leq \gamma \beta$. As $\Delta$ is dense in $(\delta,1)$,
we have $\overline{V}(\alpha,\beta)\leq \delta \beta=\max\{\alpha \beta,\beta/(\beta+1)\}$.
\end{proof}

\section{Restrictions of fractional Brownian motion} \label{s:fBm}

Let $0<\alpha<1$ be fixed and let $B\colon [0,1]\to \R$ be a fractional Brownian motion of Hurst index $\alpha$. We think of $B$ as
a random function from $[0,1]$ to $\R$.

The main goal of this section is to prove Theorem~\ref{t:fBm}.
By Theorems~\ref{t:Holder} and \ref{t:variation} it is enough to prove the following proposition. For the notation $\iA_n(\alpha,2)$ see Definition~\ref{d:An}.

\begin{proposition}\label{p:key}
Let $0<\alpha<1$ and let $B\colon [0,1]\to \R$ be a fractional Brownian motion of Hurst index $\alpha$. Then, almost surely, $B\in
  \iA_n(\alpha,2)$ for all $n$ large enough.
\end{proposition}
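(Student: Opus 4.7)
The plan is a Borel--Cantelli argument: I would show that $\mathbb{P}[B \notin \iA_n(\alpha, 2)]$ is summable in $n$, from which almost-sure membership for all sufficiently large $n$ follows.

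First, I invoke the standard Gaussian modulus of continuity for fBm: for $c$ sufficiently large, the event
\[
G_n = \{|B(s) - B(t)| \leq c\sqrt{n}\, |s-t|^{\alpha} \text{ for all } s, t \in [0,1],\ |s-t| \geq 2^{-n}\}
\]
has complement with probability decaying faster than any summable rate. On $G_n$, whenever an order-$n$ sub-interval $I_{n,i} \subset I_{m,p}$ satisfies $B(I_{n,i}) \cap J_{n,q} \neq \emptyset$, the image $B(I_{n,i})$ lies in an enlarged strip $J^{*}_{n,q}$ of width $O(\sqrt{n})\cdot 2^{-\alpha n}$. Consequently
\[
A_{n,m,p,q}(B) \cdot 2^{-n} \leq \tau_{n,m,p,q} := \mathrm{Leb}\{t \in I_{m,p} : B(t) \in J^{*}_{n,q}\},
\]
so the task reduces to controlling the occupation time $\tau_{n,m,p,q}$ uniformly in $(m,p,q)$.

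Second, I bound high moments of $\tau_{n,m,p,q}$ using the local nondeterminism of fBm (Berman, Pitt). By Fubini,
\[
\E[\tau_{n,m,p,q}^k] = \int_{I_{m,p}^k} \mathbb{P}[B(t_1),\dots,B(t_k) \in J^{*}_{n,q}]\, dt_1\cdots dt_k.
\]
Local nondeterminism gives $\det\mathrm{Cov}(B(t_1),\dots,B(t_k)) \geq c^k \prod_{j=1}^{k}|t_{(j)}-t_{(j-1)}|^{2\alpha}$ for the sorted times $0 = t_{(0)} < t_{(1)} < \cdots < t_{(k)}$, so the joint Gaussian density is bounded by $C^k \prod_j |t_{(j)} - t_{(j-1)}|^{-\alpha}$. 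Integrating the strip indicator (whose total Lebesgue weight in the Gaussian space is $(\sqrt{n}\cdot 2^{-\alpha n})^k$) against this density bound over the simplex in $I_{m,p}$ and applying a Dirichlet-integral evaluation yields
\[
\E[\tau_{n,m,p,q}^k] \leq (Ck)^{\alpha k}\, n^{k/2}\cdot 2^{-\alpha n k}\cdot 2^{-(1-\alpha) m k}.
\]

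Third, Markov's inequality at threshold $n^2 \cdot 2^{-\alpha n - (1-\alpha)m}$ gives
\[
\mathbb{P}\bigl[A_{n,m,p,q}(B) > n^2 \cdot 2^{(1-\alpha)(n-m)}\bigr] \leq (Ck^{\alpha}/n^{3/2})^{k}.
\]
On $G_n$, only $O(\sqrt{n})\cdot 2^{\alpha(n-m)}$ values of $q$ can matter for each $(m,p)$ (since $B(I_{m,p})$ has diameter $O(\sqrt{n})\cdot 2^{-\alpha m}$), so the total number of relevant triples is $O(n\cdot 2^n)$. Choosing $k = n$ makes the union bound super-exponentially small, hence summable, and Borel--Cantelli concludes. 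The main obstacle is precisely this uniform $k$-th moment estimate: naive correlation bounds for $B(t_1),\dots,B(t_k)$ are too crude because of the long-range dependence of fBm, and Berman's local nondeterminism—which says that after conditioning on past values the next one behaves almost independently—is the input that produces the clean product lower bound on $\det\mathrm{Cov}$ and in turn the tractable Dirichlet-type time integration above.
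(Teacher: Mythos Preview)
Your approach is correct and shares the overall architecture with the paper---Borel--Cantelli, Pitt's local nondeterminism, and the H\"older modulus of continuity to pass between the interval count $A_{n,m,p,q}$ and a quantity amenable to probabilistic estimates---but the central tail bound is obtained differently. The paper first discretizes to the count $S_{n,m,p,q}$ of dyadic \emph{points} mapped into $J_{n,q}$, then uses LND only in its one-step conditional form $\Var(B(\tau+t)\mid\iF_\tau)\geq c t^{2\alpha}$ to bound a conditional expectation, and iterates via stopping times and the conditional Markov inequality to produce a clean geometric tail $\P(S_{n,m,p,q}\geq \ell C\,2^{(1-\alpha)(n-m)})\leq 2^{-\ell}$. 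You instead control the full continuous occupation time by computing its $k$th moment through the determinant form of LND and a Dirichlet integral, then take $k=n$ in Markov's inequality. Your route is the classical Berman moment method and gives somewhat sharper decay at the cost of the Stirling/Dirichlet bookkeeping; the paper's stopping-time iteration is more elementary and yields the explicit geometric tail without ever computing a high moment. One small slip: in your definition of $G_n$ you wrote $|s-t|\geq 2^{-n}$, but the reduction $A_{n,m,p,q}\cdot 2^{-n}\leq \tau_{n,m,p,q}$ needs the diameter bound $\diam B(I_{n,i})\leq c\sqrt{n}\,2^{-\alpha n}$, which requires the modulus for $|s-t|\leq 2^{-n}$; this is of course what the standard uniform modulus (Lemma~\ref{l:ub} in the paper) provides, so the argument is unaffected.
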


First we define a discrete (truncated) scaled local time.

\begin{definition}
For all $n\in \N$ let
\[\iE_n=\{i2^{-n}: 0\leq i\leq 2^{n}-1\}.\]
For all $0\leq m\leq n$ let
\[\iL_{n,m,p}=I_{m,p}\cap \iE_n.\]
Clearly, $|\iL_{n,m,p}|=2^{n-m}$.
For a function $f\colon [0,1]\to \R$ the \emph{discrete scaled local time} $S_{n,m,p,q}(f)$ is the number of
points in $\iL_{n,m,p}$ which are mapped to $J_{n,q}$ by $f$:
\[
S_{n,m,p,q}(f)= \left|\{x\in\iL_{n,m,p}: f(x)\in J_{n,q}\}\right|.
\]
For every $n\in \N^+$ define
\[\iS_n(\alpha)=\{f: S_{n,m,p,q}(f)\leq (n \log n) 2^{(1-\alpha)(n-m)} ~ \forall m\leq n ,\, p< 2^{n},\, |q|\leq n2^{\alpha n}\}.\]
\end{definition}

First we need to prove some lemmas. To avoid technical difficulties we assume that the domain of $B$ is extended to $[0,\infty)$ when necessary. 

\begin{definition} Let $(\Omega, \iF, \P)$ be the probability space on which our fractional Brownian motion is defined, and let
$\iF_t=\sigma(B(s): 0\leq s\leq t)$ be the natural filtration. If $\tau\colon \Omega \to [0,\infty]$ is a stopping time then define the $\sigma$-algebra
\[\iF_{\tau}=\{A\in \iF: A\cap \{\tau\leq t\}\in \iF_t \textrm{ for all } t\geq 0\}.\]
For all stopping times $\tau$ and integers $0\leq m\leq n$ and $q$ let
\[X^{n,m,q}_{\tau}=|\{k\in \{1,\dots,2^{n-m}\}: B(\tau+k2^{-n})\in J_{n,q}\}|.\]
\end{definition}

\begin{lemma} \label{l:sln} There is a finite constant $c=c(\alpha)$ depending only on $\alpha$ such that for every bounded
stopping time $\tau$ and integers $0\leq m\leq n$ and $q$ we have, almost surely,
\[ \E(X^{n,m,q}_{\tau} \, | \, \iF_{\tau})\leq c 2^{(1-\alpha)(n-m)}.\]
\end{lemma}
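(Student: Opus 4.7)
The plan is to decompose $X^{n,m,q}_\tau$ as a sum of $2^{n-m}$ indicators and to bound each resulting conditional probability via a Gaussian density estimate. Writing
$$X^{n,m,q}_\tau = \sum_{k=1}^{2^{n-m}} \mathbf{1}\{B(\tau + k 2^{-n}) \in J_{n,q}\},$$
it will be enough to prove that for every $1\le k\le 2^{n-m}$
$$\P\bigl(B(\tau + k 2^{-n}) \in J_{n,q} \,\big|\, \iF_\tau\bigr) \leq C(\alpha)\, k^{-\alpha} \quad \text{a.s.,}$$
because $\sum_{k=1}^{2^{n-m}} k^{-\alpha} = O(2^{(1-\alpha)(n-m)})$ for $\alpha\in(0,1)$ will then finish the argument.

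For the single-time bound I would first use that $B$ is a Gaussian process, so the conditional law of $B(\tau + k 2^{-n})$ given $\iF_\tau$ is a.s.\ Gaussian with some $\iF_\tau$-measurable mean and variance $\sigma_k^2$. Since the density of any $N(\mu,\sigma^2)$ is bounded by $1/(\sigma\sqrt{2\pi})$ at every point,
$$\P\bigl(B(\tau + k 2^{-n}) \in J_{n,q} \,\big|\, \iF_\tau\bigr) \leq \frac{|J_{n,q}|}{\sigma_k \sqrt{2\pi}} = \frac{2^{-\alpha n}}{\sigma_k \sqrt{2\pi}},$$
so everything reduces to the lower bound $\sigma_k^2 \geq c_1(\alpha)\, (k 2^{-n})^{2\alpha}$ a.s.

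Establishing that lower bound is the main obstacle. For a deterministic $\tau = t$ it is precisely Pitt's strong local nondeterminism for fBm, which I would obtain from the Molchan--Golosov representation $B(u) = \int_0^u K_\alpha(u,v)\, dW(v)$: under this identification $\iF^B = \iF^W$, and the near-diagonal asymptotics of $K_\alpha$ give $\int_t^{t+s} K_\alpha(t+s,v)^2\, dv \geq c_1(\alpha)\, s^{2\alpha}$ with a constant depending only on $\alpha$. To transfer the bound to a bounded stopping time $\tau$ I would use the strong Markov property of the underlying $W$: after the time change $\widetilde W(v) = W(\tau+v)-W(\tau)$, the process $\widetilde W$ is a Brownian motion independent of $\iF_\tau$, and
$$\int_\tau^{\tau+s} K_\alpha(\tau+s,v)\, dW(v) = \int_0^s K_\alpha(\tau+s,\tau+v)\, d\widetilde W(v)$$
is, conditionally on $\iF_\tau$, centered Gaussian with variance $\int_0^s K_\alpha(\tau+s,\tau+v)^2\, dv$. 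The same asymptotics then give the required lower bound uniformly in $\tau$.

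Once the conditional-variance bound is in hand, combining it with the density estimate yields $\P(B(\tau + k 2^{-n}) \in J_{n,q}\mid \iF_\tau) \leq C(\alpha)\,k^{-\alpha}$, and summing over $k$ produces the desired inequality $\E[X^{n,m,q}_\tau\mid \iF_\tau] \leq c(\alpha)\,2^{(1-\alpha)(n-m)}$. The hardest step throughout is the uniform-in-$\tau$ strong local nondeterminism estimate: for $\alpha=1/2$ it is immediate from the Markov property, but for general $\alpha\in(0,1)$ fBm is non-Markov, so one must work through the explicit kernel $K_\alpha$ (or invoke Pitt's theorem and a stopping-time approximation) and check that the constant really depends only on $\alpha$ and not on the bound on $\tau$.
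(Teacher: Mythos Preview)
Your proposal is correct and follows essentially the same route as the paper: decompose $X^{n,m,q}_\tau$ into indicators, bound each conditional probability by $|J_{n,q}|/(\sigma_k\sqrt{2\pi})$ using that the conditional law is Gaussian, invoke a lower bound $\sigma_k^2\ge c_1(\alpha)(k2^{-n})^{2\alpha}$ on the conditional variance, and sum $\sum_{k\le 2^{n-m}} k^{-\alpha}$. The only difference is in how the conditional-variance lower bound at a stopping time is justified: the paper simply cites Pitt's strong local nondeterminism \cite[Lemma~7.1]{P} as giving $\Var(B(\tau+t)\mid\iF_\tau)\ge c_1 t^{2\alpha}$ directly, whereas you outline a derivation via the Molchan--Golosov kernel $B(u)=\int_0^u K_\alpha(u,v)\,dW(v)$, the identification $\iF^B=\iF^W$, and the strong Markov property of the driving Brownian motion $W$. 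Your approach is more self-contained and makes explicit why the constant depends only on $\alpha$; the paper's citation is shorter but treats the passage from deterministic times to stopping times as a black box. Either way the argument goes through, and the remainder (density bound plus summation) is identical.
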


\begin{proof} Pitt \cite[Lemma~7.1]{P} showed that the property of
\emph{strong local nondeterminism} holds for fractional Brownian motion, that is, there is a constant
$c_1=c_1(\alpha)>0$ such that for all $t\geq 0$, almost surely,
\begin{equation} \label{eq:tau} \Var(B(\tau+t) \, | \, \iF_{\tau})\geq c_1 t^{2\alpha}.\end{equation}
Let us fix $t>0$. As $B$ is Gaussian, almost surely the conditional distribution $B(\tau+t)\,|\,\iF_{\tau}$ is normal,
and \eqref{eq:tau} implies that its density function is bounded by $1/(\sqrt{c_1}t^{\alpha})$. Therefore, almost surely,
\begin{equation} \label{eq:tau2}
\P(B(\tau+t)\in J_{n,q}  \, | \, \iF_{\tau})\leq \int_{q2^{-\alpha n}}^{(q+1)2^{-\alpha n}}
\frac{1}{\sqrt{c_1}t^{\alpha}} \,\mathrm{d} x=c_2(t2^{n})^{-\alpha},
\end{equation}
where $c_2=1/\sqrt{c_1}$. Applying \eqref{eq:tau2} for finitely many $t$ implies that
\begin{align*} \E(X^{n,m,q}_{\tau} \, | \, \iF_{\tau})&=\sum_{k=1}^{2^{n-m}} \P(B(\tau+k2^{-n})\in J_{n,q}  \, | \, \iF_{\tau}) \\
&\leq \sum_{k=1}^{2^{n-m}} c_2 k^{-\alpha}\leq c_2\int_{0}^{2^{n-m}} x^{-\alpha}\,\mathrm{d} x=c 2^{(1-\alpha)(n-m)},
\end{align*}
where $c=c_2/(1-\alpha)$. The proof is complete.
\end{proof}

\begin{lemma} \label{l:Snmpq} There is a finite constant $C=C(\alpha)$ depending only on $\alpha$
such that for all $m,n,p,q$ and $\ell\in \N^+$ we have
\[\P(S_{n,m,p,q}(B)\geq  \ell C 2^{(1-\alpha)(n-m)})\leq 2^{-\ell}.\]
\end{lemma}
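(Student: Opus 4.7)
The plan is to convert the first-moment bound of Lemma~\ref{l:sln} into a geometric tail bound by iterating along a sequence of hitting times. This is a standard bootstrap: once one knows that the expected number of additional hits beyond any stopping time is bounded by $M_0 := c\, 2^{(1-\alpha)(n-m)}$, a single application of Markov gives probability at most $1/2$ of accumulating another $\approx 2M_0$ hits, and chaining this produces geometric decay. I will take the integer $K := \lceil 2M_0 \rceil$ as the block size; since $2^{(1-\alpha)(n-m)} \geq 1$, we have $K \leq (2c+1)\, 2^{(1-\alpha)(n-m)}$, so setting $C := 2c+1$ will produce the constant in the statement.

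Enumerate the (random) times of $\iL_{n,m,p}$ at which $B$ lands in $J_{n,q}$ as $T_1 < \cdots < T_N$, where $N := S_{n,m,p,q}(B)$. For $j \geq 1$, set $\tau_j := T_{jK}$ on $\{N \geq jK\}$ and $\tau_j := 1$ otherwise. Because $\iL_{n,m,p}$ is a deterministic finite set, each $\tau_j$ is a bounded $(\iF_t)$-stopping time. The key geometric observation is that every $t \in \iL_{n,m,p}$ with $t > \tau_j$ can be written as $\tau_j + k 2^{-n}$ with $k \in \{1,\dots,2^{n-m}\}$, so on $\{N \geq jK\}$ the number of hits in $\iL_{n,m,p}$ occurring after $\tau_j$ is at most $X^{n,m,q}_{\tau_j}$. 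Consequently
\[
\{N \geq (j+1)K\} \cap \{N \geq jK\} \ \subseteq\ \{X^{n,m,q}_{\tau_j} \geq K\},
\]
and Lemma~\ref{l:sln} together with the conditional Markov inequality gives $\P(X^{n,m,q}_{\tau_j} \geq K \mid \iF_{\tau_j}) \leq M_0/K \leq 1/2$ almost surely. Therefore $\P(N \geq (j+1)K \mid N \geq jK) \leq 1/2$.

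A straightforward induction on $j$ then yields $\P(N \geq \ell K) \leq 2^{-\ell}$ for every $\ell \in \N^+$, and since $\ell K \leq \ell C\, 2^{(1-\alpha)(n-m)}$ and $N$ is integer-valued, the lemma follows. The only delicate point is the stopping-time bookkeeping: one must verify both that $\tau_j$ is genuinely an $(\iF_t)$-stopping time (which is immediate, since $\iL_{n,m,p}$ is finite and deterministic, so $\{T_{jK}\leq t\}$ depends only on values of $B$ at the finitely many points of $\iL_{n,m,p}\cap [0,t]$) and that $X^{n,m,q}_{\tau_j}$ really does upper bound the number of post-$\tau_j$ hits in $\iL_{n,m,p}$. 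Once these are in place, the argument is a routine Markov-and-induction computation, with all the probabilistic content packaged into Lemma~\ref{l:sln}.
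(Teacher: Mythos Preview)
Your approach is essentially identical to the paper's: both define a chain of stopping times at successive blocks of hits in $\iL_{n,m,p}$, apply Lemma~\ref{l:sln} with conditional Markov to get a factor $1/2$ at each step, and multiply. The only wrinkle is the base case: you define $\tau_j$ only for $j\geq 1$, so your recursion $\P(N\geq (j+1)K\mid N\geq jK)\leq 1/2$ is established only for $j\geq 1$, which by induction yields $\P(N\geq \ell K)\leq 2^{-(\ell-1)}$ rather than $2^{-\ell}$. This is harmless for the lemma (absorb the extra factor into $C$, or alternatively introduce $\tau_0:=p2^{-m}-2^{-n}$ when $p\geq 1$ so that all of $\iL_{n,m,p}$ lies among $\{\tau_0+k2^{-n}:1\leq k\leq 2^{n-m}\}$ and the $j=0$ step goes through), but as written the claimed bound $2^{-\ell}$ does not quite follow.
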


\begin{proof}  Let $c$ be the constant in Lemma~\ref{l:sln}, clearly we may assume that $c\geq 1$.
We will show that $C=3c$ satisfies the lemma.
We define stopping times $\tau_0,\dots, \tau_\ell$. Let $\tau_0=0$. If $\tau_{k}$ is defined for some $0\leq k<\ell$ then
let $\tau_{k+1}$ be the first time such that the contribution of $B$ to $S_{n,m,p,q}(B)$ on $(\tau_k,\tau_{k+1}]\cap \iL_{n,m,p}$
is at least $2c 2^{(1-\alpha)(n-m)}$ if such a time exists, otherwise let $\tau_{k+1}=1$.
Then $c\geq 1$ and the definition of stopping times yield that
\begin{align*} \P(S_{n,m,p,q}(B)\geq  3\ell c 2^{(1-\alpha)(n-m)})&\leq \P(S_{n,m,p,q}(B)\geq  \ell(2c 2^{(1-\alpha)(n-m)}+1)) \\
&\leq \P(\tau_\ell<1)=\prod_{k=1}^{\ell} \P(\tau_{k}<1 \,|\, \tau_{k-1}<1).
\end{align*}
Note that we may assume that $\P(\tau_\ell<1)>0$ and hence the above conditional probabilities are defined,
otherwise we are done immediately. Therefore it is enough to prove that $\P(\tau_{k}<1 \,|\, \tau_{k-1}<1)\leq 1/2$ for all $1\leq k\leq \ell$.
The definition of $X^{n,m,q}_{\tau}$, Lemma~\ref{l:sln}, and the conditional Markov's inequality imply that, almost surely,
\[\P(\tau_k<1 \,|\, \iF_{\tau_{k-1}})\leq \P(X^{n,m,q}_{\tau_{k-1}}\geq 2c 2^{(1-\alpha)(n-m)}\, | \, \iF_{\tau_{k-1}})\leq 1/2.\]
Therefore the tower property of conditional expectation yields that
\[\P(\tau_{k}<1 \,|\, \tau_{k-1}<1)\leq 1/2,\]
which completes the proof.
\end{proof}

The following lemma is a discrete version of Proposition~\ref{p:key}.

\begin{lemma}\label{l:dkey}
  Almost surely, $B\in \iS_n(\alpha)$ for all large enough $n$.
\end{lemma}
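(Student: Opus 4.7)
The plan is a straightforward union bound combined with Lemma~\ref{l:Snmpq} and the Borel--Cantelli lemma. The point of the definition of $\iS_n(\alpha)$ is that the bound $(n\log n)2^{(1-\alpha)(n-m)}$ was chosen precisely to beat the polynomial-in-$2^n$ count of triples $(m,p,q)$ that need to be controlled.

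First I would count the number of triples. For fixed $n$ we have $m\in\{0,\ldots,n\}$, $p\in\{0,\ldots,2^n-1\}$, and $q\in\Z$ with $|q|\leq n 2^{\alpha n}$, so the number of triples is at most
\[
(n+1)\cdot 2^n\cdot (2n 2^{\alpha n}+1) \leq C_1 n^2 2^{(1+\alpha)n}
\]
for some absolute constant $C_1$. Next, let $C$ be the constant from Lemma~\ref{l:Snmpq} and set $\ell_n=\lceil (n\log n)/C\rceil$. Then $\ell_n C\leq n\log n+C$, so for each triple $(m,p,q)$ in the above range Lemma~\ref{l:Snmpq} gives
\[
\P\bigl(S_{n,m,p,q}(B)\geq (n\log n)\,2^{(1-\alpha)(n-m)}\bigr)\leq \P\bigl(S_{n,m,p,q}(B)\geq \ell_n C\, 2^{(1-\alpha)(n-m)}\bigr)\leq 2^{-\ell_n},
\]
provided $n$ is large enough that $\ell_n C\geq n\log n$, which is automatic.

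Then I would apply the union bound:
\[
\P(B\notin \iS_n(\alpha))\leq C_1 n^2 2^{(1+\alpha)n}\cdot 2^{-\ell_n}\leq C_1 n^2 2^{(1+\alpha)n-(n\log n)/C}.
\]
Since $n\log n$ grows faster than any linear function of $n$, for $n$ large we have $(n\log n)/C \geq (2+\alpha)n$, and hence $\P(B\notin \iS_n(\alpha))\leq 2^{-n}$ for all sufficiently large $n$. In particular, $\sum_n \P(B\notin \iS_n(\alpha))<\infty$, so the Borel--Cantelli lemma yields that almost surely $B\in \iS_n(\alpha)$ for all but finitely many $n$, which is exactly the claim.

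There is no real obstacle; the only technical point is making sure the range $|q|\leq n2^{\alpha n}$ gives only polynomial overhead in $n$ beyond the $2^{(1+\alpha)n}$ factor, so that the doubly-exponentially small $2^{-(n\log n)/C}$ estimate from Lemma~\ref{l:Snmpq} dominates. This is precisely why the definition of $\iS_n(\alpha)$ restricts to $|q|\leq n2^{\alpha n}$ rather than allowing all $q\in\Z$.
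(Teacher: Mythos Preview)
Your approach is essentially identical to the paper's: union bound over the $O(n^2 2^{(1+\alpha)n})$ triples, apply Lemma~\ref{l:Snmpq} with $\ell \approx (n\log n)/C$, and conclude by Borel--Cantelli. The only slip is that you take $\ell_n=\lceil (n\log n)/C\rceil$, which gives $\ell_n C\geq n\log n$ and hence the inclusion of events goes the wrong way; taking $\ell_n=\lfloor (n\log n)/C\rfloor$ (as the paper does) fixes this immediately.
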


\begin{proof} We give an upper bound for $\P(B\notin \iS_n(\alpha))$ by applying Lemma~\ref{l:Snmpq}
with $\ell=\lfloor (n/C) \log n \rfloor$ to each of the relevant $m,p,q$. Since $0\leq m\leq n$, $0\leq p\leq 2^m-1$ and
$|q|\leq n2^{\alpha n}$, there are at most $(n+1)2^n (2n2^{\alpha n}+1)$ possibilities to choose $m,p,q$. Therefore
Lemma~\ref{l:Snmpq} implies that
\[\P(B\notin \iS_n(\alpha))\leq (n+1)2^n (2n2^{\alpha n}+1)2^{-\lfloor (n/C) \log n \rfloor}=2^{-(n/C)\log n+O(n)}.\]
Therefore $\sum_{n=1}^{\infty} \P(B\notin \iS_n(\alpha))<\infty$, so the Borel--Cantelli lemma implies that
\[\P\left(B\in \liminf_{n}\iS_n(\alpha)\right)=1. \qedhere \]
\end{proof}

The following lemma is well known, see the more general \cite[Corollary~7.2.3]{MR}.

\begin{lemma} \label{l:ub} Almost surely, we have
\[\limsup_{h\to 0+} \sup_{0\leq t\leq 1-h} \frac{|B(t+h)-B(t)|}{\sqrt{2h^{2\alpha} \log(1/h)}}\leq 1.\]
\end{lemma}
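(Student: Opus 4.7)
The plan is to establish this classical uniform modulus bound via Gaussian tail estimates, a union bound over a carefully chosen grid, and a chaining argument to pass from the grid to the continuum. The foundation is that $B(t+h)-B(t)$ is centered Gaussian with variance $h^{2\alpha}$, so for every $t, h \geq 0$ with $t+h\leq 1$ and every $x>0$,
\[
\P\bigl(|B(t+h)-B(t)| > x\bigr)\leq 2\exp\bigl(-x^2/(2h^{2\alpha})\bigr).
\]
Choosing $x_h = (1+\eps)\sqrt{2 h^{2\alpha}\log(1/h)}$ gives $\P(|B(t+h)-B(t)|>x_h)\leq 2 h^{(1+\eps)^2}$.

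Fix $\eps>0$ and discretize $[0,1]$ at the fine scale $\delta_n = 2^{-(1+\eps)n}$. Restrict attention to pairs $(t,h)$ with $t,h\in\delta_n\Z$ and $2^{-n-1}\leq h\leq 2^{-n}$; the number of such pairs is $O(2^{(1+2\eps)n})$, and each has failure probability at most $2\cdot 2^{-(1+\eps)^2 n}$. The union bound gives total failure probability $O(2^{-\eps^2 n})$, which is summable in $n$. By Borel--Cantelli, almost surely, for all sufficiently large $n$ and every such grid pair, $|B(t+h)-B(t)|\leq x_h$.

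To extend from the grid to arbitrary $(t,h)$ with $h\in[2^{-n-1},2^{-n}]$, approximate by the nearest grid pair $(t',h')$ and control $|B(t)-B(t')|+|B(t+h)-B(t'+h')|$ by a standard chaining estimate. Applying the same Gaussian tail and Borel--Cantelli argument to purely dyadic increments of the form $B((j+1)2^{-k})-B(j2^{-k})$ for each $k \geq (1+\eps)n$ (where the union bound is over $2^k$ summands, yielding a summable $2^{-\Theta(\eps k)}$ probability), one obtains that almost surely the maximal dyadic increment at scale $2^{-k}$ is $O(2^{-\alpha k}\sqrt{k})$. Summing the resulting geometric series over $k\geq (1+\eps)n$ shows the approximation error is $O(2^{-\alpha(1+\eps)n}\sqrt{n})$, smaller than the main term $2^{-\alpha n}\sqrt{n}$ by a factor $2^{-\alpha\eps n}\to 0$. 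Letting $\eps\to 0$ along a countable sequence of positive rationals then yields the claim.

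The main obstacle is calibrating the grid scale $\delta_n$: it must be fine enough that the approximation error is of strictly smaller order than the main term $\sqrt{2 h^{2\alpha}\log(1/h)}$, yet coarse enough that the union bound over all grid pairs remains summable and does not swamp the Gaussian exponent. The choice $\delta_n = 2^{-(1+\eps)n}$ balances these two constraints; a polynomially larger grid would make the approximation error compete with the main term and destroy the sharp constant $1$, while a polynomially smaller grid would make the total number of grid pairs so large that the union bound fails to be summable in $n$.
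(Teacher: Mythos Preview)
The paper does not give a proof of this lemma at all; it simply states that the result is well known and cites \cite[Corollary~7.2.3]{MR}. Your proposal supplies what the paper omits: the classical argument via Gaussian tail bounds, a dyadic union bound, and chaining --- essentially L\'evy's proof of the uniform modulus for Brownian motion, adapted to the fBm variance $h^{2\alpha}$. The sketch is correct in outline: the counting $O(2^{(1+2\eps)n})$ grid pairs against the Gaussian tail $2^{-(1+\eps)^2 n}$ leaves a summable $2^{-\eps^2 n}$, and the chaining error at scales below $\delta_n=2^{-(1+\eps)n}$ is indeed $o(2^{-\alpha n}\sqrt{n})$. One minor imprecision: in the auxiliary Borel--Cantelli step for the dyadic increments $B((j+1)2^{-k})-B(j2^{-k})$, the decay rate of the union-bound probability is governed by the constant you place in front of $2^{-\alpha k}\sqrt{k}$ (it must exceed $\sqrt{2\log 2}$), not by $\eps$ as your ``$2^{-\Theta(\eps k)}$'' suggests; but this does not affect the conclusion. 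Since the paper offers no proof of its own, there is nothing further to compare.
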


Now we are ready to prove Proposition~\ref{p:key}.

\begin{proof}[Proof of Proposition~\ref{p:key}]
By Lemmas~\ref{l:dkey} and \ref{l:ub} we can choose a random $N\in \N^+$ such that, almost surely,
for every $n\geq N$ we have the following properties:
\begin{enumerate}[label={\textup{(\roman*)}},nosep]
\item \label{i1} $\max_{t\in [0,1]} |B(t)|<N-1$;
\item \label{i2} $B\in \iS_n(\alpha)$;
\item \label{i3} $\diam B(I_{n,p})\leq 2\sqrt{n} 2^{-\alpha n}$ for all $0\leq p\leq 2^{n}-1$;
\item \label{i4} $(4\sqrt{n}+3)(n\log n)\leq n^2$.
\end{enumerate}
Fix a path of $B$ for which the above properties hold. Let us fix an arbitrary $n\geq N$,
it is enough to prove that $B\in \iA_n(\alpha,2)$. Let
$0\leq m\leq n$, $0\leq p\leq 2^m-1$ and $q\in \Z$ be given, we need to show that
\begin{equation} \label{eq:Bnm} A_{n,m,p,q}(B)\leq n^2 2^{(1-\alpha) (n-m)}.
\end{equation}
Property \ref{i1} yields that if $q'\in \Z$ with $|q'|>n2^{\alpha n}$ then $S_{n,m,p,q'}(B)=0$.
Therefore \ref{i2} implies that for all $q'\in \Z$ we have
\begin{equation} \label{eq:Snm} S_{n,m,p,q'}(B)\leq (n\log n) 2^{(1-\alpha) (n-m)}.
\end{equation}
Let $I_{n,p'}$ be a time interval of order $n$ such that $I_{n,p'}\subset I_{m,p}$ and $B(I_{n,p'})\cap J_{n,q}\neq \emptyset$,
then \ref{i3} yields that
\begin{equation} \label{eq:BInp} B(I_{n,p'})\subset \bigcup_{q': |q'-q|\leq 2\sqrt{n}+1} J_{n,q'}.
\end{equation}
Finally, \eqref{eq:BInp}, \eqref{eq:Snm} and \ref{i4} imply that
\begin{align*}  A_{n,m,p,q}(B)&\leq \sum_{q': |q'-q|\leq 2\sqrt{n}+1} S_{n,m,p,q'}(B) \\
&\leq (4\sqrt{n}+3) (n\log n) 2^{(1-\alpha) (n-m)} \\
&\leq n^2 2^{(1-\alpha) (n-m)}.
\end{align*}
Hence \eqref{eq:Bnm} holds, and the proof of Proposition~\ref{p:key} is complete.
\end{proof}

\subsection{Dimension of the record times}

We include here a proof of Proposition~\ref{p:R}, which we could not
find in the literature. Recall that $\{B(t): t\in [0,1]\}$ is a fractional
Brownian motion of Hurst index $\alpha$.
The lower bound is quite elementary, while the upper bound relies on a first
moment computation and on a result of Molchan.
He studied the distribution of the maximal value of fractional Brownian motion on $[0,t]$
and of the time $\tau_{\max}(t)$ when it is achieved.
Specifically, Molchan \cite[Theorem~2]{Mo} proved the following.

\begin{theorem}
$\P(\tau_{\max}(1) < x) = x^{1-\alpha+o(1)}$ as $x\to0$.
\end{theorem}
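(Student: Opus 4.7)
The plan is to establish matching upper and lower bounds on $p(x):=\P(\tau_{\max}(1)<x)$ of the form $x^{1-\alpha\pm o(1)}$. By self-similarity, $\{r^{-\alpha}B(rt)\}_{t\geq 0}$ is again a fractional Brownian motion of index $\alpha$, so $\tau_{\max}(t)/t$ has the same distribution as $\tau_{\max}(1)$ for every $t>0$; this lets me freely rescale both time and space throughout the argument.

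For the lower bound $p(x)\geq x^{1-\alpha+o(1)}$, I would build an explicit event on which the maximum is attained very early. Fix $u=x^{\alpha}$ and consider the events $E_1=\{\max_{t\in[0,x/2]}B(t)\geq 2u\}$ and $E_2=\{\sup_{t\in[x/2,1]}B(t)<u\}$; on $E_1\cap E_2$ we have $\tau_{\max}(1)<x/2$. By self-similarity, $\P(E_1)$ is bounded below by an absolute positive constant. The core step is then the one-sided persistence estimate $\P(E_2\mid E_1)\geq x^{1-\alpha+o(1)}$. I would control it by partitioning $[x/2,1]$ into dyadic blocks $[2^{k}x,2^{k+1}x]$ of geometrically growing length and bounding the probability of staying below $u$ on each block by a constant strictly less than $1$, using Slepian's lemma together with Pitt's strong local nondeterminism \cite{P} to handle the Gaussian correlations between blocks. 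The product over $\asymp\log_2(1/x)$ blocks yields a polynomial decay rate, and the sharp exponent $1-\alpha$ emerges from optimizing the dyadic schedule against the scaling of the conditional variances.

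For the upper bound $p(x)\leq x^{1-\alpha-o(1)}$, I would use a time-reversal duality. The event $\{\tau_{\max}(1)<x\}$ is contained in $\{\max_{t\in[x,1]}B(t)\leq \max_{t\in[0,x]}B(t)\}$. Applying the reversal $\widehat B(s):=B(1)-B(1-s)$, which is itself an fBm of Hurst index $\alpha$, this translates into a persistence-type event for $\widehat B$: roughly, that $\widehat B$ stays above a certain random level on $[0,1-x]$. A Gaussian integral identity, obtained by integrating the joint density of $(B(t_0),\ldots,B(t_n))$ over the simplex where the maximum is attained at $t_0$, relates the probability of $B$ reaching its maximum on a short initial interval to an integrated one-sided small-ball probability for the reversed process. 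Plugging the lower bound from the previous paragraph (applied to $\widehat B$) into this identity furnishes the matching polynomial upper bound on $p(x)$.

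The main obstacle is pinning the exponent down as exactly $1-\alpha$ rather than merely obtaining polynomial bounds. Scaling alone gives polynomial upper and lower bounds with mismatched exponents; matching them requires Molchan's integral identity, which in the standard Brownian case $\alpha=1/2$ reduces to the arcsine law via the reflection principle. For fBm the absence of independent increments and of a Markov property forces one to replace reflection-based arguments by the Gaussian correlation inequality combined with strong local nondeterminism, and making these substitutes yield the precise exponent (rather than some nearby constant) is the technically delicate heart of the proof.
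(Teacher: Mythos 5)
This statement is not proved in the paper at all: it is quoted verbatim from Molchan \cite[Theorem~2]{Mo} and used as a black box in the proof of Proposition~\ref{p:R}. So there is no internal proof to compare against; your proposal has to stand on its own as a proof of Molchan's theorem, and as written it does not.

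The genuine gap is in how you obtain the exponent $1-\alpha$. Your lower bound rests on partitioning $[x/2,1]$ into $\asymp\log_2(1/x)$ dyadic blocks and bounding the probability of staying below $u$ on each block by a constant $c<1$; any such argument can only yield $\P(E_2)\gtrsim c^{\log_2(1/x)}=x^{\log_2(1/c)}$, and nothing in the sketch forces $\log_2(1/c)$ to equal $1-\alpha$. The phrase ``optimizing the dyadic schedule against the scaling of the conditional variances'' does not identify a mechanism: Slepian's lemma only compares in one direction, and its applicability flips with the sign of the increment correlations (positive for $\alpha>1/2$, negative for $\alpha<1/2$), while strong local nondeterminism gives a one-sided variance bound that decouples blocks but again cannot pin down the exponent. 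For the upper bound you invoke ``Molchan's integral identity,'' but that identity --- an exact computation of $\E\bigl[I_T^{-1}\bigr]$ for a functional such as $I_T=\int_0^T e^{B(s)}\,ds$, obtained from stationarity of increments and the reversal $s\mapsto T-s$, followed by Laplace/Tauberian arguments that convert it into persistence asymptotics with $o(1)$ loss in the exponent --- \emph{is} the proof of the theorem. Citing it here is circular. What your sketch actually delivers is a pair of polynomial bounds $x^{c_1}\leq\P(\tau_{\max}(1)<x)\leq x^{c_2}$ with no reason for $c_1=c_2=1-\alpha$; the entire content of Molchan's result is closing that gap, and the missing ingredient is precisely the exact identity you assume. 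If you want a self-contained proof you must either derive that identity or find a substitute that produces a sharp exponent rather than per-block constants; otherwise the honest course is to do what the paper does and cite \cite{Mo}.
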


\begin{proof}[Proof of Proposition~\ref{p:R}]
With probability one $B$ is $\gamma$-H\"older continuous for every $\gamma<\alpha$ and maps $\iR$ to the non-degenerate interval $I=[0,\max \{B(t): t\in [0,1]\}]$, thus 
$\dim_\iH \iR \geq \alpha \dim_\iH I=\alpha$.

Therefore it is enough to prove that $\overline{\dim}_\iM \iR\leq \alpha$ almost surely. Using that
$\tau_{\max}(t)$ and $t \tau_{\max}(1)$ have the same distribution and $\{B(1-t)-B(1): t\in [0,1]\}$ is also a fractional
Brownian motion, for all $0\leq \eps<t\leq 1$ we obtain
\begin{align*} \P(\iR \cap [t-\eps,t] \neq\emptyset)&=\P(\tau_{\max}(t) \geq t-\eps)=\P(\tau_{\max}(1)\geq 1-\eps/t) \\
&=\P(\tau_{\max}(1)\leq \eps/t)=(\eps/t)^{1-\alpha+o(1)},
\end{align*}
with the $o(1)$ term tending to $0$ as $\eps/t\to 0$.

Let $N(m)$ be the number of intervals $[(i-1)/m,i/m]$ which intersect $\iR$. Let $\delta>0$ be arbitrary and fix $s\in \N^+$ such that
  $\P(\iR \cap [t-\eps,t] \neq\emptyset) \leq (\eps/t)^{1-\alpha-\delta}$ whenever $\eps/t<1/s$. Using this above for $i>s$ and the trivial bound $1$ for $i\leq s$, for every large enough $m$ we obtain
  \[
  \E N(m) \leq s + \sum_{i=s+1}^{m} (1/i)^{1-\alpha-\delta}
  \leq s + Cm^{\alpha+\delta}
  \leq 2Cm^{\alpha+\delta},
  \]
where $C$ is a finite constant depending only on $\alpha+\delta$. By Markov's inequality
  \[
  \P(N(m) > m^{\alpha+2\delta}) \leq 2C m^{-\delta}
  \]
  for any $m$ large enough. Applying this for $m=2^n$ yields that
\[\sum_{n=1}^{\infty} \P(N(2^n)>2^{n(\alpha+2\delta)})<\infty.\]
Thus the Borel--Cantelli lemma implies that, almost surely,
$N(2^{n}) \leq 2^{n(\alpha+2\delta)}$ for all large enough $n$.
Therefore $\overline \dim_\iM \iR \leq \alpha+2\delta$. As $\delta>0$ was arbitrary,
$\overline \dim_\iM \iR \leq \alpha$ almost surely.
\end{proof}

\section{Higher dimensional Brownian motion} \label{s:2D}

The aim of this section is to prove Theorem~\ref{t:2DBM}. The idea is to
find (in a greedy manner) large sets along which a simple random walk in
$\Z^2$ is monotone. Since the scaled simple random walk converges to
Brownian motion, this gives sets along which $B$ is monotone. To control
the dimension of the limit sets we estimate the energy of the discrete
sets and apply a version of Frostman's lemma to bound the dimension.

Given a simple random walk $S\colon \N\to \Z^2$, define the \emph{greedy increasing subset} by
\[
a_0=0 \quad \textrm{and} \quad a_{i+1}=\min\{a>a_i: S(a)-S(a_i)\in \Z^2_+\}.
\]
Our first task is to prove tightness for the number and structure of record
times in $[0,n)$. Since our argument may apply in similar situations, we
state some of our arguments in the more general context of sums of i.i.d.\
variables with power law tails.

Before focusing on the case of random walks, we prove Theorem~\ref{t:acc},
a limit theorem, which will allow us to transfer estimates from the random
walk setting to Brownian motion. As Theorem~\ref{t:acc} below is a quite
general result about random sequences with i.i.d.\ increments, we hope that
it will find further applications.

\subsection{Energy of renewal processes}

Fix $0<\alpha<1$. Let $\tau\in\N^+$ be some random variable, and assume that
there are $c_1,c_2\in \R^+$ such that for all $n\in \N^+$ we have
\begin{equation} \label{eq:Xn}
  c_1n^{-\alpha} \leq \P(\tau>n) \leq c_2 n^{-\alpha}.
\end{equation}
Let $\{\tau_i\}_{i\geq 1}$ be an i.i.d.\ sequence with the law of $\tau$.
Define 
\[T_k=\sum_{i=1}^k \tau_i \quad  \textrm{and} \quad  \iT = \{T_k: k\geq 1\}.\]
The number of steps before reaching $n$ is denoted by $m_n = |\iT \cap
[0,n)|$.

The following lemma is fairly standard.

\begin{lemma}\label{L:m_n_tight}
  There are constants $c_3,c_4$ such that for all $t,n>0$ we have
  \begin{enumerate}[label={\textup{(\roman*)}},nosep]
  \item \label{cl1} $\P(m_n < tn^{\alpha}) \leq c_3t$.
  \item \label{cl2} $\P(m_n > tn^{\alpha}) \leq e^{-c_1t}$.
  \item \label{cl3} $\E m_n \leq c_4 n^{\alpha}$, and more generally, for all integers $i < j < k$ we have
    \[
    \E \big(|\iT \cap [j,k)| \big| i\in\iT\big) \leq c_4 (k-j)^\alpha.
    \]
\end{enumerate}
\end{lemma}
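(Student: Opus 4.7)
The plan is to derive the three estimates from the identity $\{m_n \ge k\} = \{T_k < n\}$, by separating typical steps from exceptionally large ones and applying Markov's inequality for the first bound, a direct product bound for the second, and the strong Markov property of the renewal sequence for the generalized third.

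For \ref{cl1} I will write $\P(m_n < k) = \P(T_k \ge n)$ and split according to whether some $\tau_i$ is at least $n$. A union bound gives $\P(\exists i \le k : \tau_i \ge n) \le c\, k n^{-\alpha}$ from the upper tail. On the complement $\tau_i = \min(\tau_i, n-1)$ for every $i$, and since
\[
\E\min(\tau, n-1) = \sum_{j=0}^{n-2}\P(\tau > j) = O(n^{1-\alpha})
\]
by the upper tail hypothesis, Markov's inequality applied to $\sum_{i \le k}\min(\tau_i, n-1)$ bounds this piece by $c'\, k n^{-\alpha}$ as well. Substituting $k = tn^\alpha$ yields $\P(m_n < tn^\alpha) \le c_3 t$, the bound being trivially satisfied once $c_3 t > 1$.

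For \ref{cl2} the key observation is that $T_{k+1} < n$ forces $\tau_i < n$ for every $i \le k+1$, since a single jump of size at least $n$ already pushes $T_{k+1}$ above $n$. Using the \emph{lower} tail bound and independence,
\[
\P(m_n > k) \le \P(T_{k+1} < n) \le \bigl(1 - c_1 n^{-\alpha}\bigr)^{k+1} \le e^{-c_1(k+1)n^{-\alpha}},
\]
and with $k = \lceil tn^\alpha \rceil$ this gives $\P(m_n > tn^\alpha) \le e^{-c_1 t}$.

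For \ref{cl3} I will sum \ref{cl2}: $\E m_n = \sum_{k \ge 1} \P(T_k < n) \le \sum_{k \ge 1} e^{-c_1 k n^{-\alpha}} \le c_4 n^\alpha$ by a geometric-series estimate. The conditional statement then follows from the strong Markov property of $\iT$, which is immediate from the i.i.d.\ structure: conditional on $i \in \iT$, the shifted process $(\iT - i) \cap (0,\infty)$ is an independent fresh copy $\iT'$ of $\iT$. Setting $a = j - i$, $b = k - i$ and letting $R$ be the first renewal of $\iT'$ in $[a,\infty)$ (a stopping time), the post-$R$ process is itself fresh, so $|\iT' \cap [a,b)| \le 1 + |\iT'' \cap [0, b-R)|$ for an independent copy $\iT''$. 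Taking expectations and using the already proved $\E m_m \le c_4 m^\alpha$ gives $\E|\iT' \cap [a,b)| \le 1 + c_4(b-a)^\alpha$, and since $k - j = b - a \ge 1$ the additive $1$ is absorbed into a larger constant.

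The main obstacle is \ref{cl1}: because $\tau$ has infinite mean one cannot apply Markov to $T_k$ directly. The trick is truncation at level $n$, producing a truncated mean of order $n^{1-\alpha}$; multiplied by $k$ and divided by $n$ this yields precisely the required $kn^{-\alpha}$ scaling, while the rare untruncated jumps are handled by a separate union bound using the upper tail.
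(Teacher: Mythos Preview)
Your argument is correct and, for parts \ref{cl2} and \ref{cl3}, follows the paper's proof essentially verbatim: the product bound via $\P(\tau\leq n)\leq 1-c_1 n^{-\alpha}$ and $1-u\leq e^{-u}$, the summation $\E m_n=\sum_k\P(T_k<n)$, and the stopping-time/renewal restart for the conditional version are exactly what the paper does (in fewer words). One cosmetic slip in \ref{cl2}: since $\{m_n>tn^\alpha\}\supseteq\{m_n>\lceil tn^\alpha\rceil\}$, you should take $k=\lfloor tn^\alpha\rfloor$ rather than $\lceil tn^\alpha\rceil$; the estimate $(k+1)n^{-\alpha}>t$ still holds and the conclusion is unchanged.

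The only substantive difference is in \ref{cl1}. The paper simply cites an external lemma, whereas you supply a self-contained proof via truncation at level $n$ and Markov's inequality. Your argument is the standard one and is precisely what lies behind the cited result: the union bound on $\{\exists\, i\le k:\tau_i\ge n\}$ contributes $O(kn^{-\alpha})$ from the upper tail, and on the complement the truncated sum has mean $k\cdot O(n^{1-\alpha})$, so Markov against threshold $n$ gives another $O(kn^{-\alpha})$. This is a genuine (if small) addition of content over the paper's treatment.
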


\begin{proof}
  Claim \ref{cl1} is given by \cite[Lemma~4.2]{ABP}. Inequality~\eqref{eq:Xn} and $(1-u) \leq e^{-u}$ imply that
  \begin{align*}
    \P(m_n> tn^{\alpha})
    &\leq \P(\tau_i\leq n \textrm{ for all } i\leq \lceil tn^{\alpha} \rceil)
    \leq \left(\P(\tau \leq n)\right)^{tn^{\alpha}} \\
    &\leq \left(1-c_1 n^{-\alpha}\right)^{tn^\alpha}
    \leq e^{-c_1 t},
  \end{align*}
so \ref{cl2} holds. The first bound of \ref{cl3} follow easily from \ref{cl2}.
The general bound holds since the first $\ell$ with $T_\ell \in[j,k)$ (if
there is such $\ell$) is a stopping time. Applying \ref{cl2} to the sequence
starting at time $\ell$ completes the proof.
\end{proof}

\begin{definition}
 Let $\mu$ be a non-atomic \emph{mass distribution} on a metric space $(E,\rho)$, (that is, a Borel measure on $E$ with $0<\mu(E)<\infty$). For $\gamma>0$, define the \emph{$\gamma$-energy of $\mu$} by
  \[
  \iE_\gamma(E,\mu) = \iint_{E^2}
  \frac{\mathrm{d} \mu(x)\,\mathrm{d} \mu(y)}{\rho(x,y)^{\gamma}}.
  \]
\end{definition}

For the following theorem see \cite[Theorem~4.27]{MP}.

\begin{theorem} \label{t:energy} Let $\mu$ be a non-atomic
mass distribution on a metric space $E$ with $\iE_\gamma(E,\mu)<\infty$.
Then $\dim_\iH E\geq \gamma$.
\end{theorem}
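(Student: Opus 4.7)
The plan is to apply the standard energy/potential argument. First, since the hypothesis
\[\iE_\gamma(E,\mu)=\iint_{E^2}\rho(x,y)^{-\gamma}\,d\mu(x)\,d\mu(y)<\infty\]
is a double integral, Fubini's theorem forces the potential $U_\mu(x):=\int_E\rho(x,y)^{-\gamma}\,d\mu(y)$ to be finite for $\mu$-almost every $x$. I would then pick $M<\infty$ large enough that the sublevel set $A_M=\{x\in E:U_\mu(x)\leq M\}$ has $\mu(A_M)>0$, and replace $\mu$ by its restriction $\mu':=\mu|_{A_M}$. Then $\mu'$ is still a non-atomic positive finite measure on $E$, and since $\mu'\leq\mu$, the potential $U_{\mu'}$ is bounded by $M$ everywhere on $A_M$.

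Next I would convert this bound on the potential into a uniform ball estimate: for every $x\in A_M$ and $r>0$,
\[M\geq U_{\mu'}(x)\geq\int_{B(x,r)}\rho(x,y)^{-\gamma}\,d\mu'(y)\geq r^{-\gamma}\mu'(B(x,r)),\]
so $\mu'(B(x,r))\leq Mr^\gamma$. From here the mass distribution principle finishes the proof: given any countable cover $\{U_i\}$ of $E$, only those $U_i$ meeting $A_M$ carry $\mu'$-mass; for each such cover element pick $y_i\in U_i\cap A_M$, so that $U_i\subset B(y_i,\diam U_i)$ and hence $\mu'(U_i)\leq M(\diam U_i)^\gamma$. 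Summing yields $\mu(A_M)=\mu'(E)\leq M\sum_i(\diam U_i)^\gamma$, and taking the infimum over covers gives $\mathcal{H}^\gamma_\infty(E)\geq \mu(A_M)/M>0$, whence $\dim_\iH E\geq\gamma$.

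The only genuinely delicate step is the truncation in the first paragraph: finiteness of the double integral merely yields that $U_\mu$ is finite $\mu$-almost everywhere, whereas the ball estimate requires a uniform bound on the potential at every point of the support of the measure we use. Passing to the sublevel set $A_M$ is precisely what converts the former into the latter. Non-atomicity of $\mu$ enters only implicitly, as any atom of positive mass would already force $\iE_\gamma(E,\mu)=\infty$; everything else is essentially bookkeeping from the definition of Hausdorff content.
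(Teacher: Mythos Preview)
Your argument is correct and is the standard potential-theoretic proof of this classical fact. Note, however, that the paper does not actually prove Theorem~\ref{t:energy}: it simply quotes it as \cite[Theorem~4.27]{MP} and uses it as a black box, so there is no ``paper's own proof'' to compare against. Your write-up is essentially the proof one finds in that reference (Frostman/energy method via the mass distribution principle), and the truncation to the sublevel set $A_M$ is handled correctly.
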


Consider the set $S_n = \big(\iT + [0,1)\big) \cap [0,n)$, endowed
with Lebesgue measure $\lambda$. We next estimate the $\gamma$-energy of
$\lambda$.

\begin{lemma}\label{L:energy_small}
Let $0<\gamma<\alpha$. There is a finite constant $c(\gamma)$,
such that for all $n$ we have
\[\E \iE_\gamma(S_n,\lambda) \leq c(\gamma) n^{2\alpha-\gamma}.\]
\end{lemma}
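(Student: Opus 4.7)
The plan is to use $\iT\subset\N^+$ to decompose $S_n$ into integer-indexed unit intervals, write the energy as a sum over these blocks, and estimate it via Lemma~\ref{L:m_n_tight}~\ref{cl3}. Since $S_n = \bigsqcup_{i\in\iT\cap[0,n)}[i,i+1)$, changing variables $x=i+u$, $y=j+v$ with $u,v\in[0,1)$ gives
\[
\iE_\gamma(S_n,\lambda) = \sum_{i,j\in\iT\cap[0,n)}\int_0^1\!\!\int_0^1\frac{du\,dv}{|i-j+u-v|^\gamma}.
\]
The inner integral is a finite constant $c'(\gamma)$ when $i=j$ (using $\gamma<1$); when $i\neq j$, reducing it to a one-dimensional integral against the triangular density of $u-v$ on $[-1,1]$ bounds it by $C(\gamma)|i-j|^{-\gamma}$. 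Hence
\[
\iE_\gamma(S_n,\lambda) \leq c'(\gamma)\,m_n + 2C(\gamma)\!\!\!\sum_{\substack{i,j\in\iT\cap[0,n)\\ i<j}}\!\!\!(j-i)^{-\gamma}.
\]

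Taking expectations, Lemma~\ref{L:m_n_tight}~\ref{cl3} gives $\E m_n\leq c_4 n^\alpha\leq n^{2\alpha-\gamma}$ (using $\gamma<\alpha$), so only the double sum needs care. Writing
\[
\E\!\!\!\sum_{i<j\in\iT\cap[0,n)}\!\!\!(j-i)^{-\gamma}=\sum_{i=1}^{n-1}\P(i\in\iT)\cdot\E\!\left[\sum_{j\in\iT\cap(i,n)}(j-i)^{-\gamma}\,\Big|\,i\in\iT\right],
\]
I would estimate the inner conditional expectation by a dyadic decomposition of $(i,n)$ into blocks $[i+2^\ell,i+2^{\ell+1})$ for $\ell=0,1,\dots,\lceil\log_2 n\rceil$. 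On each block $(j-i)^{-\gamma}\leq 2^{-\gamma\ell}$, and Lemma~\ref{L:m_n_tight}~\ref{cl3} applied with $i<i+2^\ell<i+2^{\ell+1}$ yields $\E(|\iT\cap[i+2^\ell,i+2^{\ell+1})|\mid i\in\iT)\leq c_4\cdot 2^{\alpha\ell}$. Since $\alpha>\gamma$, the resulting geometric sum in $2^{(\alpha-\gamma)\ell}$ is dominated by its top term, giving a bound $C'(\gamma)\,n^{\alpha-\gamma}$ uniform in $i$.

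Combining with $\sum_i\P(i\in\iT)=\E m_n\leq c_4 n^\alpha$ yields $\E\sum_{i<j}(j-i)^{-\gamma}\leq c(\gamma)\,n^{2\alpha-\gamma}$, which dominates the diagonal contribution and gives the claim. The only nontrivial ingredient is the strong renewal bound of Lemma~\ref{L:m_n_tight}~\ref{cl3}; the rest is clean bookkeeping, and I do not foresee an obstacle — the factor $n^\alpha$ from $\E|\iT\cap[0,n)|$ and the factor $n^{\alpha-\gamma}$ from summing $(j-i)^{-\gamma}$ over one such index combine exactly to the target exponent $2\alpha-\gamma$.
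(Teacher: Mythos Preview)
Your proposal is correct and follows essentially the same approach as the paper. Both arguments reduce to a dyadic decomposition in the distance $j-i$ and invoke Lemma~\ref{L:m_n_tight}~\ref{cl3} twice: once to bound $\E m_n = \sum_i \P(i\in\iT) \leq c_4 n^\alpha$, and once (conditionally on $i\in\iT$) to bound the number of $j\in\iT$ at distance $\approx 2^\ell$ from $i$ by $c_4 2^{\alpha\ell}$; the geometric sum in $2^{(\alpha-\gamma)\ell}$ is then dominated by its top term, yielding the exponent $2\alpha-\gamma$. The only difference is organizational: the paper groups pairs by dyadic distance scale first (the quantities $P_k$, $M_k$), whereas you condition on $i$ first and then decompose dyadically in $j$ --- but the estimates and the key lemma used are identical.
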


\begin{proof} The argument is to consider the contribution to the energy
from pairs $x,y$ with distance at various scales, and the largest scale will
dominate the rest.

Up to a factor of $2$ we may restrict the integral to $x<y$. We split the integral on $S_n\times S_n$ into several parts.
Note that $S_n$ is a disjoint union of unit intervals.
Let $P_0$ be the contribution to $\iE_\gamma(S_n,\lambda)$ from pairs $x\in[i,i+1)$ and $y\in [j,j+1)$ where $i,j\in \iT$ and
$0\leq j-i\leq 1$. The number of such pairs $\{i,j\}$ is at most $m_n$, so Lemma~\ref{L:m_n_tight}~\ref{cl3} yields that
\[
\E P_0\leq (\E m_n) \int_0^1 \int_x^2 |x-y|^{-\gamma} dy\, dx =O_{\gamma}(n^\alpha).
\]

For $k\geq 1$ let $P_k$ be the contribution to $\iE_\gamma(S_n,\lambda)$ from pairs $x\in[i,i+1)$ and $y\in [j,j+1)$
where $i,j\in \iT$ and $i+2^{k-1}< j \leq i+2^k$. Let $M_k$ denote the number of such pairs
$\{i,j\}$. For such $\{i,j\}$
the contribution from $x\in[i,i+1)$ and $y\in [j,j+1)$ to $P_k$ is at most
\[
\int_i^{i+1} \int_j^{j+1} |x-y|^{-\gamma} dy\, dx \leq 2^{-(k-1)\gamma},
\]
where we used $y-x\geq 2^{k-1}$. Thus $P_k \leq 2^{-(k-1)\gamma} M_k$. Lemma~\ref{L:m_n_tight}~\ref{cl3} yields that
for every $i$, conditioned on $i\in\iT$, the expected number of $j$ in $\iT \cap (i+2^{k-1},i+2^k]$ is at most
$c_4 2^{(k-1)\alpha}$. Lemma~\ref{L:m_n_tight}~\ref{cl3} also implies that the expected number of $i<n$ in $\iT$ equals $\E m_n\leq c_4 n^\alpha$. Therefore $\E M_k \leq c_4^2 n^\alpha 2^{(k-1)\alpha}$ and we obtain that
\[
\E P_k \leq c_4^2 n^\alpha 2^{(k-1)(\alpha-\gamma)}.
\]

This partition gives the
identity $\iE_\gamma(S_n,\lambda) =2\sum_{k=0}^{\infty} P_k$. As $P_k=0$ whenever $2^{k-1}>n$, we
have $\E \iE_\gamma(S_n,\lambda)= 2\sum_{2^k\leq 2n} \E P_k$. With the bounds above, the largest $k$ dominates the sum and we
arrive at the inequality $\E \iE_\gamma(S_n,\lambda) \leq c(\gamma) n^{2\alpha-\gamma}$.
\end{proof}

We will wish to work with rescaled sets. For all $n\in \N^+$ let
\[D_n = \left(\frac1n \iT \right) \cap [0,1) \quad \textrm{and} \quad
C_n = \frac1n S_n = D_n + [0,1/n).\]
Define the measure $\mu_n=n^{1-\alpha} \lambda|_{C_n}$, that is, $n^{1-\alpha}$ times the Lebesgue
measure restricted to $C_n$.

\begin{lemma}\label{L:mu_tight}
Let $0<\gamma<\alpha$ and $\eps>0$. Then there exist $N,N_{\gamma}\in \N^+$ depending on $\eps$ so that with probability at least $1-\eps$ we have
\[
N^{-1} \leq \mu_n([0,1]) \leq N \quad \text{ and } \quad \iE_\gamma(C_n,\mu_n) \leq N_{\gamma}.
\]
\end{lemma}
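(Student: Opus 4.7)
My plan is to reduce both claims to the estimates in \cref{L:m_n_tight,L:energy_small} for the unscaled objects $m_n$ and $\iE_\gamma(S_n,\lambda)$. Since $\tau_i\ge 1$, the unit intervals $T_k+[0,1)$ comprising $S_n$ are pairwise disjoint, so $\lambda(S_n)$ equals $m_n$ up to clipping of at most the last interval at $n$; in particular $m_n-1\le\lambda(S_n)\le m_n$. As $C_n = S_n/n$ and $\mu_n = n^{1-\alpha}\lambda|_{C_n}$, this yields
\[
  \mu_n([0,1]) \;=\; n^{-\alpha}\lambda(S_n) \;\in\; \bigl[n^{-\alpha}(m_n - 1),\, n^{-\alpha}m_n\bigr],
\]
so both tails of $\mu_n([0,1])$ are governed by those of $m_n/n^\alpha$, to which parts (i) and (ii) of \cref{L:m_n_tight} apply directly.

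For the energy, the substitution $x = x'/n$, $y = y'/n$ gives $d\lambda(x) = n^{-1}\,d\lambda(x')$ on each factor and $|x-y|^{-\gamma} = n^{\gamma}|x'-y'|^{-\gamma}$, hence
\[
  \iE_\gamma(C_n, \mu_n) \;=\; n^{2(1-\alpha)} \iint_{C_n \times C_n} \frac{d\lambda(x)\,d\lambda(y)}{|x-y|^{\gamma}} \;=\; n^{\gamma - 2\alpha}\, \iE_\gamma(S_n, \lambda).
\]
Combined with \cref{L:energy_small} this gives $\E\,\iE_\gamma(C_n, \mu_n) \leq c(\gamma)$ uniformly in $n$, and Markov's inequality yields $\P(\iE_\gamma(C_n,\mu_n) > N_\gamma) \leq c(\gamma)/N_\gamma$.

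Given $\eps > 0$, \cref{L:m_n_tight} provides $\P(m_n > N n^\alpha) \leq e^{-c_1 N}$ and $\P(m_n < n^\alpha/(2N)) \leq c_3/(2N)$, each below $\eps/3$ once $N$ is sufficiently large; similarly pick $N_\gamma$ with $c(\gamma)/N_\gamma \leq \eps/3$. A union bound over these three bad events then yields $N^{-1} \leq \mu_n([0,1]) \leq N$ (after absorbing the harmless $-1$ in $\lambda(S_n)\ge m_n-1$ into a slightly larger $N$, valid for $n$ sufficiently large depending on $\eps$) and $\iE_\gamma(C_n,\mu_n) \leq N_\gamma$ with probability at least $1-\eps$. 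There is no genuine obstacle here: the computational heart is simply the scaling identity for the energy in the second display, after which the lemma is a direct consequence of the tightness bounds already established.
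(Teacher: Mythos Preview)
Your proposal is correct and follows essentially the same route as the paper: both reduce the mass bounds to \cref{L:m_n_tight} (the paper uses Markov via part~(iii) for the upper tail where you use the stronger exponential bound~(ii), a harmless variation) and both derive the energy bound from the scaling identity $\iE_\gamma(C_n,\mu_n)=n^{\gamma-2\alpha}\iE_\gamma(S_n,\lambda)$ together with \cref{L:energy_small} and Markov. A small simplification: since each $\tau_i\in\N^+$, the $T_k$ are integers and no interval $[T_k,T_k+1)$ with $T_k<n$ is clipped at $n$, so in fact $\lambda(S_n)=m_n$ exactly and your ``$-1$'' adjustment is unnecessary.
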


\begin{proof}
Let $I=[0,1]$ and let $N\in \N^+$ be arbitrary. For all $n\in \N^+$ the following three inequalities hold.
Markov's inequality and $\E \mu_n(I)=1$ yield that
\begin{equation*} \label{eq:A1} \P(\mu_n(I)>N)\leq \frac{\E \mu_n(I)}{N}=\frac{1}{N}.
\end{equation*}
Lemma~\ref{L:m_n_tight}~\ref{cl3} and \ref{cl1} yield that
\begin{equation*}  \P(\mu_n(I)<N^{-1})=\P(m_n<N^{-1}\E m_n)
\leq \P(m_n<N^{-1}c_4n^{\alpha})\leq \frac{c_3 c_4}{N}.
\end{equation*}
Lemma~\ref{L:energy_small} yields that $\E \iE_{\gamma}(C_n,\mu_n) \leq c(\gamma)$.
Indeed, $\iE_\gamma(C_n,\lambda) = n^{\gamma-2} \iE_{\gamma}(S_n,\lambda)$ and
since $\mu_n = n^{1-\alpha}\lambda$ on $C_n$, changing to $\mu_n$ gives a
further factor of $n^{2-2\alpha}$. Now Markov's inequality implies that
\begin{equation*} \P(\iE_\gamma(C_n,\mu_n) > N_{\gamma}) \leq \frac{c(\gamma)}{N_{\gamma}}.
\end{equation*}
The above three inequalities with large enough $N,N_{\gamma}$ complete the proof.
\end{proof}

\subsection{A limit theorem}
Theorem~\ref{t:acc} is concerned with a sequence of sequences of
i.i.d.\ variables $\{\tau^{(n)}_i\}_{i\geq 1}$ satisfying
\eqref{eq:Xn}. That is, for each fixed $n$ the variables
$\{\tau^{(n)}_i\}_{i\geq 1}$ are i.i.d., but there could be arbitrary
dependencies between variables with different numbers $n$. The superscript $n$ is also the parameter used for scaling sums of the $n$th sequence. Thus we
denote 
\[T^{(n)}_k=\sum_{i=1}^k \tau^{(n)}_i \quad \textrm{and} \quad \iT^{(n)}=\{T^{(n)}_k: k\geq 1\}.\] 
For all $n\in \N^+$ define
\[
D_n = \left(\frac 1n \iT^{(n)}\right)\cap [0,1) \quad \textrm{and} \quad C_n=D_n+[0,1/n).
\]

\begin{theorem}\label{t:acc}
With the notations above, almost surely, $\{D_n\}_{n\geq 1}$ has an
accumulation point $D$ in the Hausdorff metric such that $\dim_\iH D \geq \alpha$.
\end{theorem}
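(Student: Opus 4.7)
The plan is to produce, almost surely, a subsequence $\{n_\ell\}$ along which $\mu_{n_\ell}$ converges weakly to a non-trivial finite measure $\mu$ supported on a Hausdorff limit $D$ of $\overline{C_{n_\ell}}$, with $\iE_\gamma(D,\mu) < \infty$ for every $\gamma < \alpha$. Theorem~\ref{t:energy} will then give $\dim_\iH D \geq \alpha$, and since $d_H(D_n, C_n) \leq 1/n$, the set $D$ is also an accumulation point of $\{D_n\}$ in $(\mathcal{K}[0,1], d_H)$.

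The key step is an almost-sure selection that controls the $\gamma$-energy for every $\gamma < \alpha$ simultaneously. Set $\gamma_j = \alpha - 1/j$. Fix $\eps > 0$. Lemma~\ref{L:mu_tight} provides $N = N(\eps)$ with $\P(\mu_n([0,1]) \in [N^{-1}, N]) \geq 1 - \eps/2$ for every $n$, and from $\E\, \iE_{\gamma_j}(C_n, \mu_n) \leq c(\gamma_j)$ (the rescaling of Lemma~\ref{L:energy_small} used in the proof of Lemma~\ref{L:mu_tight}) Markov's inequality supplies $K_j = K_j(\eps)$ with $\P(\iE_{\gamma_j}(C_n, \mu_n) > K_j) \leq \eps\, 2^{-j-1}$, uniformly in $n$. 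Let $G_n(\eps)$ be the intersection of the mass event with all the energy events; a union bound gives $\P(G_n(\eps)) \geq 1 - \eps$ for every $n$, and reverse Fatou then yields $\P(G_n(\eps) \text{ i.o.}) \geq 1 - \eps$. Consequently the event $H := \bigcup_{k \geq 1} \{G_n(1/k) \text{ i.o.}\}$ has probability one.

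On $H$, fix $k$ so that $G_n(1/k)$ holds infinitely often, and pass to such a subsequence $n_\ell$. Prokhorov's theorem (using the uniform mass bound) together with compactness of $(\mathcal{K}[0,1], d_H)$ allow a further extraction along which $\mu_{n_\ell} \to \mu$ weakly and $\overline{C_{n_\ell}} \to D$ in the Hausdorff metric. Then $\mu([0,1]) = \lim_\ell \mu_{n_\ell}([0,1]) \geq N(1/k)^{-1} > 0$ and $\supp \mu \subseteq D$. For every $j$, lower semicontinuity of $(x,y) \mapsto |x-y|^{-\gamma_j}$ combined with the portmanteau theorem applied to $\mu_{n_\ell} \otimes \mu_{n_\ell} \to \mu \otimes \mu$ yields $\iE_{\gamma_j}(D, \mu) \leq \liminf_\ell \iE_{\gamma_j}(C_{n_\ell}, \mu_{n_\ell}) \leq K_j(1/k) < \infty$. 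Thus $\mu$ is non-atomic with finite $\gamma_j$-energy for every $j$, so Theorem~\ref{t:energy} gives $\dim_\iH D \geq \gamma_j$ for every $j$, i.e.\ $\dim_\iH D \geq \alpha$. The main obstacle, handled by the diagonal-and-union argument of the second paragraph, is promoting the "one fixed $\gamma$, high probability" bounds from Lemmas~\ref{L:mu_tight} and~\ref{L:energy_small} into "all $\gamma < \alpha$ simultaneously, almost surely".
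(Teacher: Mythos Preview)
Your proof is correct and follows essentially the same approach as the paper: bound mass and $\gamma$-energies uniformly in $n$ with high probability (for a sequence $\gamma_j\nearrow\alpha$), use reverse Fatou to get an i.o.\ subsequence, extract weak and Hausdorff limits by compactness, and pass the energy bound through by lower semicontinuity to invoke Theorem~\ref{t:energy}. Your write-up is somewhat more explicit than the paper's in promoting the ``for every $\eps$, with probability $\geq 1-\eps$'' statement to an almost-sure one via the union $H=\bigcup_k\{G_n(1/k)\text{ i.o.}\}$, but the substance is identical.
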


\begin{proof}
Let $\mu_n$ be $n^{1-\alpha}$ times the Lebesgue measure on $C_n$.
Fix $\eps>0$. For some finite constants $\{N_{\gamma}\}_{0\leq \gamma<\alpha}$ for all $n$ define the event
\[
\iB_n =\{N_0^{-1} \leq \mu_n([0,1]) \leq N_0 \text{ and }
\iE_{\gamma}(C_n,\mu_n) \leq N_{\gamma} \textrm{ for all } 0<\gamma<\alpha\}.
\]
Since the map $\gamma\mapsto \iE_{\gamma}(C_n,\mu_n)$ is non-decreasing, applying Lemma~\ref{L:mu_tight} for a sequence of parameters $\gamma_k\nearrow \alpha$ with $\eps_k=2^{-k}\eps$ implies that there are constants $N_{\gamma}$ such that
$\P(\iB_n) \geq 1-\eps$ for all $n$.
Let $\iB = \limsup_{n} \iB_n$, then $\P(\iB)\geq 1-\eps$. Since $\eps>0$ was arbitrary,
it is enough to prove that the theorem is satisfied whenever $\iB$ holds.

Assume that $\iB$ holds, then there is a random subsequence
$\{n_i\}_{i\geq 1}$ such that the events $\iB_{n_i}$ hold for all
$i\in \N^+$. Since $N_0^{-1}\leq \mu_{n_i}([0,1])\leq N_0$ for all $i$, by passing
to a subsequence we may assume that $\mu_{n_i}\to \mu$ weakly, where
$\mu$ is a measure on $[0,1]$ satisfying $N_0^{-1}\leq \mu([0,1])\leq N_0$.
Similarly, we may assume that $D_{n_i}\to D$ in the Hausdorff metric for
some compact set $D\subset [0,1]$. As $C_n = D_n + [0,1/n]$ is close
in the Hausdorff metric to $D_n$, we get $\supp(\mu)\subset D$. For all
$0<\gamma<\alpha$ we obtain
\[
\iE_{\gamma}(D,\mu)\leq \liminf_i \iE_{\gamma}(\mu_{n_i}) \leq N_{\gamma}<\infty,
\]
for the first inequality see e.g.\ \cite[Lemma~2.2]{M:Borel}.
Theorem~\ref{t:energy} now implies that $\dim_\iH D\geq \alpha$,
and the proof is complete.
\end{proof}

\subsection{Application to random walks}

We now apply Theorem~\ref{t:acc} to random walks on $\Z^2$ and thus prove
Theorem~\ref{t:2DBM}.

For each $n$, let $S^{(n)}$ be a simple random walk on $\Z^2$, and define
the rescaled random walks by $W_n(t) = \sqrt{2} n^{-1/2} S^{(n)}(\lfloor nt
\rfloor)$. It is well known that it is possible to construct the walks
$S^{(n)}$ and two-dimensional Brownian motion $\{B(t): t\in [0,1]\}$ on the
same probability space so that $W_n$ converges uniformly to $B$ on the
interval $[0,1]$, see e.g.\ \cite[Theorem~3.5.1]{LL} or \cite{MP}. We
henceforth assume such a coupling.

Recall that for each walk we construct the \emph{greedy increasing subset}
by
\[
a^{(n)}_0=0 \quad \textrm{and} \quad a^{(n)}_{i+1}=\min\{a>a^{(n)}_i: S^{(n)}(a)-S^{(n)}(a^{(n)}_i)\in \Z^2_+\}.
\]
For every $n$, this sequence has i.i.d.\ increments with the law of
\[
\tau = \inf \{k>0: S(k) \in \Z^2_+\}.
\]
We use the notation $a_n\sim b_n$ if $a_n/b_n \to 1$ as $n\to \infty$. We need the following known estimate.

\begin{theorem}\label{t:greedy}
  Let $S\colon \N\to \Z^2$ be a two-dimensional simple random walk. Let $\tau$ be the hitting time of the positive quadrant: $\tau=\inf\{k>0 :
  S(k) \in \Z_+^2\}$. Then there is a $c\in \R^+$ so that \[\P(\tau>n) \sim c n^{-1/3}.\]
\end{theorem}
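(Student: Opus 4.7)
The plan is to identify $\{\tau > n\}$ as the event that the 2D simple random walk avoids the discrete orthant $Q = \{(x,y) \in \Z^2 : x \geq 1, y \geq 1\}$ for $n$ steps, and to read off the tail from precise asymptotics for random walks in cones. Up to a bounded neighbourhood of the origin, the complement $K = \R^2 \setminus \mathrm{int}(Q)$ is a planar cone of opening angle $\theta = 3\pi/2$. On such a cone, the positive harmonic function for Brownian motion vanishing on $\partial K$ is
\begin{equation*}
h(r,\phi) = r^{2/3}\sin(2\phi/3)
\end{equation*}
in suitable polar coordinates, homogeneous of degree $p = \pi/\theta = 2/3$; hence the exit time $\sigma$ of 2D Brownian motion from $K$ started at an interior $x$ satisfies $\P_x(\sigma > t) \sim c\, h(x)\, t^{-1/3}$.

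To transfer this to the walk I would invoke the Denisov--Wachtel theorem on exit times of random walks from cones: for a centred, finite-variance, irreducible planar walk on $\Z^2$ and a Lipschitz cone $K$, for every interior start $x$,
\begin{equation*}
\P_x(\tau_K > n) \sim \varkappa\, \psi(x)\, n^{-p/2},
\end{equation*}
where $\psi$ is the (unique, up to scaling) positive discrete harmonic function on $K\cap \Z^2$ vanishing on its complement and comparable to $h$. For our walk and cone, the hypotheses are immediate, $p = 2/3$, and $\varkappa \psi(x)>0$. To handle the start at the apex, condition on the first step:
\begin{equation*}
\P_0(\tau > n) = \tfrac14 \sum_{y \in \{\pm e_1, \pm e_2\}} \P_y(\tau > n-1).
\end{equation*}
Each of the four neighbours of $0$ lies in $K$, and after possibly one further step (for $e_1, e_2$, which lie on $\partial K$) reaches an interior point of $K$ with positive probability; summing the contributions then yields $\P(\tau > n) \sim c\, n^{-1/3}$ for some $c > 0$.

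The main obstacle is primarily bookkeeping: verifying the Lipschitz-cone hypothesis of Denisov--Wachtel for $K$ (standard), identifying $\psi$ up to a scalar, and checking its positivity at the relevant starting points (immediate, since the walk has positive probability to survive in $K$ for any prescribed number of steps). An entirely elementary but less precise alternative uses the $45^\circ$ change of variables $U = X+Y$, $\tilde V = X-Y$, under which $U$ and $\tilde V$ become independent simple random walks on $\Z$ and $\{\tau > n\}$ reduces by a parity check to $\{U(k) \leq |\tilde V(k)| \text{ for every } k \leq n\}$; estimating this by reflection and comparison with a Bessel-type process yields the correct exponent $1/3$, but pinning down the positive constant $c$ really requires the cone estimate above.
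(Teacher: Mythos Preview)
Your proposal is correct and aligns with the paper's treatment: the paper does not prove this theorem but cites Denisov--Wachtel \cite[Theorem~1]{DW} (and Varopoulos for a weaker version), and explains the exponent $1/3$ via the conformal map $z\mapsto z^{2/3}$ sending the complement of the positive quadrant to a half-plane---exactly your harmonic-function computation $p=\pi/(3\pi/2)=2/3$ in different language. Your additional bookkeeping (conditioning on the first step to move off the apex, checking the cone hypotheses) fills in details the paper leaves to the reference.
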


For the above theorem see the general result of Denisov and Wachtel
\cite[Theorem~1]{DW}, or a bit weaker one due to Varopoulos \cite[(0.3.3)
and (0.4.1)]{V}. On exit times of planar Brownian motion from cones see
Evans \cite[Corollary~5(i)]{Ev} or the somewhat weaker
\cite[Lemma~10.40]{MP}. In the Brownian case the exponent $1/3$ can be
calculated by mapping the complement of $\R_+^{2}$ onto a half plane by the
conformal map $z\mapsto z^{2/3}$ and using the conformal invariance of
planar Brownian motion. The continuous case can be transformed to the
discrete one by coupling. For the history of similar estimates and for
further references see Denisov and Wachtel \cite{DW}.

\begin{proof}[Proof of Theorem~\ref{t:2DBM}]
Recall that $S^{(n)}$ are two-dimensional simple random walks so that the rescaled walks
$W_n$ converge uniformly to a Brownian motion $B$. Let $\tau$ be
the hitting time of the positive quadrant by $S$, that is,
$\tau=\inf\{k>0 : S(k) \in \Z_+^2\}$. For every $n\in\N^+$, the greedy
increasing subsequence of $S^{(n)}$ has i.i.d.\ increments, distributed as $\tau$.
By Theorem~\ref{t:greedy} we have
\[
\P(\tau>n)\sim cn^{-1/3}
\]
with some $c\in \R^+$. Thus we can apply Theorem~\ref{t:acc} with
$\alpha=1/3$. This yields that,
almost surely, there is an accumulation point $D$ of $\{D_n\}_{n\geq 1}$
in the Hausdorff metric such that $\dim_\iH D\geq 1/3$. As $W_n \to B$ uniformly, $B$ is non-decreasing on $D$.
This completes the proof.
\end{proof}

\begin{remark}
  For a higher dimensional simple random walk $S\colon \N\to\Z^d$ define the hitting time
  \[
  \tau =\inf\{k>0 : S(k) \in \Z_+^d\}.
  \]
  Then $\P(\tau>n)\sim cn^{-\alpha}$ for some $c,\alpha \in (0,\infty)$, see \cite[Theorem~1]{DW}. Our argument proves an analogue of Theorem~\ref{t:2DBM} in higher dimensions with this $\alpha$ instead of $1/3$.
\end{remark}

\section{Restrictions of generic $\alpha$-H\"older continuous functions} \label{s:generic}

The goal of this section is to prove Theorem~\ref{t:gen}.
First we need some preparation. The following lemma is probably well known.
However, we could not find an explicit reference for its second claim, so we outline the proof.

\begin{lemma} \label{l:ext} Let $0<\alpha\leq 1$ and $c>0$. Assume that $A\subset \R$ and $f\colon A\to \R$ is
 a function such that for all $x,y\in A$ we have
\begin{equation} \label{eq:cxy} |f(x)-f(y)|\leq c|x-y|^{\alpha}.
\end{equation}
Then $f$ extends to $F \colon \R\to \R$ satisfying the above inequality for all $x,y\in \R$.
If $A$ is closed then $F$ can be chosen to be linear on the components of $\R\setminus A$.
\end{lemma}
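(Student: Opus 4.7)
The plan is to handle the two claims of the lemma separately. For the first claim I would use McShane's extension formula,
\[
F(x)=\inf_{a\in A}\bigl(f(a)+c|x-a|^{\alpha}\bigr).
\]
The observation that makes this work uniformly in $\alpha\in(0,1]$ is that $u\mapsto u^{\alpha}$ is subadditive on $[0,\infty)$, so $d(x,y):=|x-y|^{\alpha}$ satisfies the triangle inequality, and the hypothesis \eqref{eq:cxy} just says that $f\colon(A,d)\to\R$ is $c$-Lipschitz. The classical McShane argument then shows that $F|_A=f$ (take $a=x$ in the infimum for the upper bound, and use \eqref{eq:cxy} for the reverse inequality) and that $F$ is $c$-Lipschitz with respect to $d$, which is exactly the desired Hölder bound on $\R$.

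For the second claim, I would assume $A$ is closed and nonempty (the empty case is trivial) and write $\R\setminus A$ as a countable disjoint union of open intervals. Define $F$ directly: set $F|_A=f$; on each bounded component $(a,b)$ with $a,b\in A$ let $F$ be the affine interpolation of $f(a)$ and $f(b)$; on each of the (at most two) unbounded components let $F$ be the constant equal to the value of $f$ at the finite endpoint. Then $F$ agrees with $f$ on $A$ and is affine on every component of $\R\setminus A$, so only the Hölder bound remains to be checked.

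The key device is that for each $x\in\R$ one may write $F(x)=(1-t_x)f(p_x)+t_xf(p'_x)$ as a convex combination of values of $f$ at two points of $A$ (trivially if $x\in A$, and with $p_x=p'_x$ on an unbounded component). For any $x,y\in\R$, expanding the product of the two representations gives $F(x)-F(y)=\sum_{i,j}\lambda_{ij}\bigl(f(p_i)-f(q_j)\bigr)$, a convex combination over four pairs in $A\times A$. Because the $p_i$ all lie in the closure of a single component of $\R\setminus A$ and similarly for the $q_j$, the differences $p_i-q_j$ all share a common sign, so $\sum_{i,j}\lambda_{ij}(p_i-q_j)=x-y$ and in particular $\sum_{i,j}\lambda_{ij}|p_i-q_j|=|x-y|$. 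Applying \eqref{eq:cxy} termwise and then concavity of $u\mapsto u^{\alpha}$ on $[0,\infty)$ (valid since $\alpha\leq 1$) yields
\[
|F(x)-F(y)|\leq c\sum_{i,j}\lambda_{ij}|p_i-q_j|^{\alpha}\leq c\Bigl(\sum_{i,j}\lambda_{ij}|p_i-q_j|\Bigr)^{\alpha}=c|x-y|^{\alpha}.
\]
The only delicate point is the sign alignment needed to conclude $\sum_{i,j}\lambda_{ij}|p_i-q_j|=|x-y|$; this is where disjointness of open components of $\R\setminus A$ from $A$ (so that $p_i$ and $q_j$ do not interleave) is used. The concavity of $u\mapsto u^{\alpha}$, which is precisely what preserves the \emph{same} constant $c$ rather than a larger one, is the essential feature of the regime $\alpha\leq 1$.
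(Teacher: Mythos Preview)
Your McShane formula for the first claim is correct and is a cleaner route than the paper's, which simply passes to the closure of $A$ and then invokes the closed-case construction; the observation that $|x-y|^{\alpha}$ is a metric (subadditivity of $u\mapsto u^{\alpha}$) is exactly what makes the Lipschitz extension theorem apply with the same constant $c$.

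For the second claim you use the same extension as the paper, and your convex-combination-plus-concavity computation is precisely the ``straightforward calculation'' the paper leaves to the reader. Two points need patching, though:

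(i) When $x$ and $y$ lie in the \emph{same} bounded component $(a,b)$, your sign-alignment claim is false: with $p_1=q_1=a$ and $p_2=q_2=b$ the differences $p_i-q_j$ take the values $0$ and $\pm(b-a)$, and $\sum_{i,j}\lambda_{ij}|p_i-q_j|$ need not equal $|x-y|$ (for $x=y$ strictly inside $(a,b)$ the left side is $2s(1-s)(b-a)>0$). Treat this case separately; it is immediate since $F$ is affine on $[a,b]$ with slope at most $c(b-a)^{\alpha-1}$ and $|x-y|\leq b-a$.

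(ii) On an unbounded component you take $p_x=p'_x$ equal to the finite endpoint, so $\sum_i\mu_i p_i$ equals that endpoint rather than $x$, and the asserted identity $\sum_{i,j}\lambda_{ij}(p_i-q_j)=x-y$ fails. What does hold is that the endpoint lies between $x$ and $y$, whence $\sum_{i,j}\lambda_{ij}|p_i-q_j|\leq|x-y|$; your chain of inequalities then goes through with $\leq$ in place of the final equality.

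With these two fixes the argument is complete and matches the paper's intended proof.
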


\begin{proof} As $f$ admits a unique continuous extension to the closure of $A$ which clearly satisfies \eqref{eq:cxy},
we may assume that $A$ is closed. Let $I$ be any component of $\R \setminus A$, it is enough to prove that $f$ extends to
$A\cup I$ such that \eqref{eq:cxy} holds.
If $I=(-\infty,a)$ or $I=(a,\infty)$ for some $a\in A$ then $F|_{I}\equiv f(a)$ works.
Now let $I=(a,b)$ for some $a,b\in A$ and let $F$ be the linear extension of $f$ to $I$.
The concavity of the function $x\mapsto x^{\alpha}$ implies that $|F(x)-F(y)|\leq c|x-y|^{\alpha}$ for all $x,y\in A\cup I$,
the straightforward calculation is left to the reader.
\end{proof}

Let $||\cdot||$ denote the maximum norm of $C[0,1]$.

\begin{lemma} \label{l:dense} Let $f\in C_1^{\alpha}[0,1]$ and $\varepsilon>0$ be arbitrary. Then there is a piecewise
linear function $g$ with nonzero slopes and $c<1$ such that $||g-f||\leq \varepsilon$ and for all $x,y\in [0,1]$ we have
\[|g(x)-g(y)|\leq c|x-y|^{\alpha}.\]
\end{lemma}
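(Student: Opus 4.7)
The plan is to combine Lemma~\ref{l:ext} with a small vertical contraction of $f$ that creates H\"older slack. I would proceed in three steps.

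First, fix a small $\eta\in(0,1)$ with $\eta\,||f||\le\varepsilon/4$ and set $h=(1-\eta)f$, so that $|h(x)-h(y)|\le(1-\eta)|x-y|^{\alpha}$ for all $x,y\in[0,1]$. Choose a partition $0=x_0<x_1<\cdots<x_n=1$ whose mesh $\delta$ satisfies $2\delta^{\alpha}\le\varepsilon/4$, and let $g_0\colon[0,1]\to\R$ be the piecewise linear function with $g_0(x_i)=h(x_i)$. A two-term triangle inequality on each subinterval gives $|g_0(x)-h(x)|\le 2\delta^{\alpha}$ everywhere, so $||g_0-f||\le 2\delta^{\alpha}+\eta||f||\le\varepsilon/2$. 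Since $|g_0(x_i)-g_0(x_j)|=|h(x_i)-h(x_j)|\le(1-\eta)|x_i-x_j|^{\alpha}$ for all $i,j$, applying Lemma~\ref{l:ext} to the closed finite set $\{x_0,\dots,x_n\}$ with the linear-on-gaps extension (which must coincide with $g_0$ on $[0,1]$) yields $|g_0(x)-g_0(y)|\le(1-\eta)|x-y|^{\alpha}$ for all $x,y\in[0,1]$.

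Second, I would remove any zero slopes of $g_0$ by a small perturbation at the nodes. Choose real numbers $\zeta_0,\dots,\zeta_n$ with $\max_i|\zeta_i|\le\varepsilon/2$ satisfying
\[
|\zeta_i-\zeta_j|\le\tfrac{\eta}{2}|x_i-x_j|^{\alpha}\quad(i\neq j),\qquad g_0(x_{i+1})-g_0(x_i)+\zeta_{i+1}-\zeta_i\ne 0\quad\forall\, i.
\]
The second condition excludes only finitely many hyperplanes from the parameter space of the $(\zeta_i)$, while the first cuts out a convex neighborhood of the origin of positive measure (as soon as $\max_i|\zeta_i|$ is much smaller than $(\eta/4)\min_{i\ne j}|x_i-x_j|^{\alpha}$), so generic choices satisfy both. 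Let $g$ be the piecewise linear function with $g(x_i)=g_0(x_i)+\zeta_i$: then $g$ has only nonzero slopes, the nodal values satisfy $|g(x_i)-g(x_j)|\le(1-\eta/2)|x_i-x_j|^{\alpha}$, and a second application of Lemma~\ref{l:ext} propagates this to $|g(x)-g(y)|\le(1-\eta/2)|x-y|^{\alpha}$ on all of $[0,1]$. Setting $c=1-\eta/2<1$ and noting that $||g-g_0||\le\max_i|\zeta_i|\le\varepsilon/2$ gives $||g-f||\le\varepsilon$, as required.

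The subtle point is making the perturbation simultaneously small enough to preserve a H\"older constant strictly below one and generic enough to move every slope off zero. The initial contraction by $(1-\eta)$ is the key device: it provides exactly the room needed to accommodate a perturbation of H\"older seminorm up to $\eta/2$ while keeping the final constant strictly below one.
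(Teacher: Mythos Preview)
Your argument is correct and follows essentially the same strategy as the paper: piecewise linear interpolation controlled by Lemma~\ref{l:ext}, a multiplicative contraction to push the H\"older constant strictly below $1$, and a final small perturbation to eliminate zero slopes. The only cosmetic differences are that the paper scales \emph{after} interpolating (you scale $f$ first) and removes flat pieces by inserting a midpoint ``tent'' rather than by perturbing nodal values; your use of Lemma~\ref{l:ext} on the perturbed nodes is arguably a cleaner way to certify the final H\"older bound.
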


\begin{proof} Let $0=x_0<x_1<\dots <x_\ell=1$ such that the oscillation of $f$ on $[x_{i-1},x_i]$ is at most $\varepsilon/3$ for all
$i\in \{1,\dots,\ell\}$. Let $g_0$ be the piecewise linear function passing through the points $(x_i,f(x_i))$, then
clearly $||g_0-f||\leq \varepsilon/3$. Applying Lemma~\ref{l:ext} $\ell$-times we obtain that
$g_0\in C_1^{\alpha}[0,1]$. We can choose $c_0<1$ such that
$g_1=c_0g_0$ satisfies $||g_1-g_0||\leq \varepsilon/3$. Hence for all $x,y\in [0,1]$ we have
\[ |g_1(x)-g_1(y)|\leq c_0|x-y|^{\alpha}.\]
Let $c\in (c_0,1)$, then it is easy to see that
every horizontal line segment of the graph of $g_1$ (if there are any) can be replaced by two line segments of nonzero slopes such that the
resulting function $g$ satisfies $||g-g_1||\leq \varepsilon/3$ and for all $x,y\in [0,1]$ we have
\[ |g(x)-g(y)|\leq c|x-y|^{\alpha}.\]
Clearly $||g-f||\leq \varepsilon$, and the proof is complete.
\end{proof}

Now we are ready to prove the first part of Theorem~\ref{t:gen}.
The concept of the proof is similar to that of \cite[Theorem~1.4]{E},
but the technical details are much more difficult and in order to create appropriate
H\"older continuous functions some new ideas are needed as well.

\begin{proof}[Proof of Theorem~\ref{t:gen}~\eqref{eq:gen1}]
Let $\beta\in (\alpha,1)$ be arbitrarily fixed, and define
\[ \iF_{\beta}=\{f\in C_1^{\alpha}[0,1]: \dim_\iH \{f=g\}\leq 1-\beta \textrm{ for all } g\in C_1^{\beta}[0,1]\},\]
where we use the notation $\{f=g\}=\{x\in [0,1]: f(x)=g(x)\}$.
First we show that it is enough to prove that $\iF_{\beta}$ is co-meager in $C_1^{\alpha}[0,1]$.
Indeed, since co-meager sets are closed under countable intersection, this implies that
for a countable dense set $\Gamma \subset (\alpha,1)$ the set
$\iF:=\bigcap_{\gamma \in \Gamma} \iF_{\gamma}$ is co-meager in $C_1^{\alpha}[0,1]$. Now assume that $f\in \iF$ and
$A\subset [0,1]$ such that $f|_{A}$ is $\beta$-H\"older for some $\beta>\alpha$, we need to prove that $\dim_\iH A\leq 1-\beta$.
Choose a sequence $\gamma_n\in \Gamma$ such that $\gamma_n \nearrow \beta$ and fix an $n\in \N^+$. As $f|_{A}$ is $\beta$-H\"older,
there is an $\varepsilon>0$ such that for all $E\subset A$ with $\diam E\leq \varepsilon$ the function $f|_{E}$ is
$\gamma_n$-H\"older with H\"older constant $1$. Let $A=\bigcup_{i=1}^{k}A_i$ such that $\diam A_i\leq \varepsilon$ for all $i\in \{1,\dots,k\}$.
Then $f|_{A_i}$ are $\gamma_n$-H\"older with H\"older constant $1$, so by Lemma~\ref{l:ext} there are functions
$g_i\in C_1^{\gamma_n}[0,1]$ such that $A_i\subset \{f=g_i\}$ for all $i$.
Therefore $f\in \iF_{\gamma_n}$ implies that $\dim_\iH A_i\leq  \dim_\iH \{f=g_i\}\leq 1-\gamma_n$ for all $i$,
thus the countable stability of Hausdorff dimension yields $\dim_\iH A\leq 1-\gamma_n$. This holds for all
$n\in \N^+$, so $\dim_\iH A\leq 1-\beta$.

Now let $\beta\in (\alpha,1)$ be fixed, and for all $N,M\in \N^+$ define
\begin{equation*}
\iF(N,M)=\left\{f\in C_1^{\alpha}[0,1]: \iH_{\infty}^{1-\beta+1/N}(\{f=g\})\leq 1/M \textrm{ for all }
g\in C_1^{\beta}[0,1] \right\}.
\end{equation*}
Clearly $\iF_{\beta}=\bigcap_{N=1}^{\infty} \bigcap_{M=1}^{\infty} \iF(N,M)$, thus it is enough to show that each $\iF(N,M)$
contains a dense open subset of $C_1^{\alpha}[0,1]$. Assume that $M,N\in \N^+$, $r_0>0$, $c_0<1$,
and a piecewise linear function $f_0\in C_1^{\alpha}[0,1]$ with nonzero slopes are given such that for all $x,y\in [0,1]$ we have
\[ |f_0(x)-f_0(y)|\leq c_0|x-y|^{\alpha}.\]
By Lemma~\ref{l:dense} it is enough to find a function $f\in C_1^{\alpha}[0,1]$ and $r>0$ such that
\begin{equation} \label{eq:Bfr} B(f,r)\subset B(f_0,r_0)\cap \iF(N,M),
\end{equation}
where $B(f,r)$ denotes the closed ball in $C_1^{\alpha}[0,1]$ centered at $f$ with radius $r$.
Now we define $f$. We can fix integers $k_0,m_0\geq 2$ such that $m_0$ is odd and
\[ \max\left\{\alpha, \beta-\frac 1N\right\}<\frac{\log (k_0/2)}{\log (k_0m_0)}<\frac{\log k_0}{\log (k_0m_0)}<\beta.\]
Let $\gamma=\log k_0/\log (k_0m_0)$ and let $f_1=f_{k_0,m_0}\in C^{\gamma}[0,1]$ be the
self-affine function defined in Section~\ref{s:KK}. We will approximate $f_0$ by re-scaled copies of $f_1$.
As $f_1$ is $\gamma$-H\"older continuous, there exists $c_{1}\in \R^+$ such that for all $x,y\in [0,1]$ we have
\begin{equation} \label{eq:ckm} |f_{1}(x)-f_{1}(y)|\leq c_{1}|x-y|^{\gamma}.
\end{equation}
Assume that $0=x_1<\dots<x_{\ell+1}=1$ such that $f_0$ is linear on each interval
$[x_{i},x_{i+1}]$ with nonzero slopes. Let $y_{i}=f_0(x_i)$ for all $1\leq i\leq \ell+1$, then $y_{i+1}-y_i\neq 0$ for all
$i\leq \ell$. Let us define $\theta,\xi>0$ and $n_0\in \N^+$ such that
\begin{align*} \theta&=\min_{1\leq i\leq \ell} (x_{i+1}-x_i), \\
\xi&=\max_{1\leq i\leq \ell} |y_{i+1}-y_i|, \\
n_0&\geq \max\left\{\frac{2\xi}{r_0},\left(\frac{2\xi}{(1-c_0)\theta^{\alpha}}\right)^{1/(1-\alpha)},
\left(\frac{2\xi c_{1}}{\theta^\gamma}\right)^{1/(1-\gamma)} \right\}.
\end{align*}
For all $i\in \{1,\dots,\ell\}$ and $j\in \{0,\dots,n_0\}$ let
\[ x_{i,j}=x_{i}+\frac{j}{n_0}(x_{i+1}-x_{i}) \quad \textrm{and} \quad y_{i,j}=f_0(x_{i,j}).\]
Now we are ready to define $f$. If for some $i\in \{1,\dots,\ell\}$, $j\in \{0,\dots,n_0-1\}$, and
$a\in [0,1)$ we have
\begin{align} \label{eq:xdef} x&=x_{i,j}+\frac {a}{n_0}(x_{i+1}-x_i), \textrm{ then let} \\
f(x)&=y_{i,j}+\frac{f_{1}(a)}{n_0}(y_{i+1}-y_i). \notag
\end{align}
Note that the linearity of $f_0$ implies that if $x$ satisfies \eqref{eq:xdef}, then
\[f_0(x)=y_{i,j}+\frac{a}{n_0}(y_{i+1}-y_i).\]

Now we prove that
\begin{equation} \label{eq:fCB} f\in C_1^{\alpha}[0,1] \quad \textrm{and} \quad f\in B(f_0,r_0/2).
\end{equation}
As the range of $f_0$ is $[0,1]$, the definition of $f$ and $n_0$ imply that for all $x\in [0,1]$ we have
\begin{equation} \label{eq:fxgx} |f(x)-f_0(x)|\leq \frac {\xi}{n_0}\leq \frac{r_0}{2},
\end{equation}
thus it is enough to prove for \eqref{eq:fCB} that $f\in C_1^{\alpha}[0,1]$.
Assume that $x,y\in [0,1]$ and $x<y$, we need to prove that $|f(x)-f(y)|\leq |x-y|^{\alpha}$. We consider three cases.

\noindent \emph{First case}: Suppose that $y-x\geq \theta/n_0$.
Then \eqref{eq:fxgx}, $f_0\in C_1^{\alpha}[0,1]$ and the definition of $n_0$ imply that
\[ |f(x)-f(y)|\leq |f_0(x)-f_0(y)|+\frac{2\xi}{n_0}\leq c_0|x-y|^{\alpha}+\frac{2\xi}{n_0} \leq |x-y|^{\alpha}.\]
\emph{Second case}: Assume that $x,y$ are \emph{adjacent points}, that is, $x,y\in [x_{i,j},x_{i,j+1}]$ for some
$1\leq i\leq \ell$ and $0\leq j\leq n_0-1$. Then the definitions of $f$ and $\xi$, inequality \eqref{eq:ckm}, the definition of $n_0$, and $\alpha<\gamma$ yield that
\begin{align*} |f(x)-f(y)|&=\frac{|y_{i+1}-y_i|}{n_0}  \left|f_{1}\left(n_0\frac{x-x_{i,j}}{x_{i+1}-x_i}\right)-
f_{1}\left(n_0\frac{y-x_{i,j}}{x_{i+1}-x_i}\right)\right| \\
&\leq \frac{\xi c_1}{n_0} \left(\frac{n_0|x-y|}{\theta}\right)^{\gamma}\leq |x-y|^{\gamma}\leq |x-y|^{\alpha}.
\end{align*}
\emph{Third case}: Suppose that there exists $z\in (x,y)$ such that $x,z$ and $z,y$ are adjacent points.
The triangle inequality, the above inequality, the definition of $n_0$, and $\alpha<\gamma$ imply that
\begin{align*} |f(x)-f(y)|&\leq |f(x)-f(z)|+|f(z)-f(y)| \\
&\leq \frac{\xi c_1}{n_0}  \left(\left(\frac{n_0|x-z|}{\theta}\right)^{\gamma}+ \left(\frac{n_0|z-y|}{\theta}\right)^{\gamma}\right)\\
&\leq \frac{2\xi c_1}{n_0}  \left(\frac{n_0|x-y|}{\theta}\right)^{\gamma} \leq |x-y|^{\gamma}\leq |x-y|^{\alpha}.
\end{align*}
By the definition of $\theta$ for all $x,y$ at least one of the above three cases holds,
which concludes the proof of \eqref{eq:fCB}.

Finally, we prove that \eqref{eq:Bfr} holds for some $r>0$. Let us define $\delta>0$ as
\[ \delta=\min_{1\leq i\leq \ell} |y_{i+1}-y_i|.\]
Since $\gamma<\beta$, we have $k_0<(k_0m_0)^{\beta}$. Thus by $\beta<1$ we can fix an $n_1\in \N^+$
such that for all $i\in \N^+$ and $n\geq n_1$ we have
\begin{equation} \label{eq:n1}
\left(\frac{i+2}{(k_0m_0)^{n}}\right)^{\beta}< \frac{i\delta}{3n_0k_0^{n}}.
\end{equation}
Since $\log (k_0/2)/\log (k_0m_0)>\beta-1/N$,
we can define
\[ \sigma=\frac{2m_0}{(k_0m_0)^{1-\beta+1/N}}<1.\]
Let us fix an integer $n_2>n_1$ such that
\begin{equation} \label{eq:n2} \sigma^{n_2}<\frac{1}{k_0^{n_1} M\ell n_0}.
\end{equation}
Let us define $r>0$ as
\[ r=\min\left\{\frac{r_0}{2},\frac{\delta}{3n_0k_0^{n_2}}\right\}.\]
Then clearly $B(f,r)\subset B(f_0,r_0)$. Let us fix $g\in B(f,r)$, it is enough to show that $g\in \iF(N,M)$. Fix
$i\in \{1,\dots,\ell\}$ and $j\in \{0,\dots,n_0-1\}$ arbitrarily, and let $I_0=[x_{i,j},x_{i,j+1}]$.
Let $h\in C_1^{\beta}[0,1]$, by the subadditivity of $\iH_{\infty}^{s}$ it is enough to show that
\begin{equation} \label{eq:Mn0}
\iH_{\infty}^{1-\beta+1/N}(\{g=h\}\cap I_0)\leq \frac{1}{M\ell n_0}.
\end{equation}
Assume that $n\in \N$ and $I\subset I_0$ is a closed interval.
We divide $I$ into $(k_0m_0)^{n}$ non-overlapping closed intervals of equal length,
the resulting intervals are called the \emph{elementary intervals of $I$ of level $n$}.
Assume that $n_1\leq n\leq n_2$ and let $I_1$ be an elementary interval of $I_0$ of level $n-1$. Now we show that
$\{g=h\}$ intersects at most $2m_0$ many first level elementary intervals of $I_1$.
Let us decompose $I_1$ into $m_0$ non-overlapping intervals of equal length, let
$I_2$ be one of them. Let $J_1,J_2\subset I_2$ be two nonconsecutive first level elementary intervals of $I_1$,
it is enough to show that $\{g=h\}$ cannot intersect both $J_1$ and $J_2$.
Assume to the contrary that there are $z_1\in J_1$ and $z_2\in J_2$ such that
$g(z_1)=h(z_1)$ and $g(z_2)=h(z_2)$.
If there are $i\in \{1,\dots, k_0-2\}$ first level elementary intervals of $I_1$ between
$J_1$ and $J_2$, then
\[ |z_1-z_2|\leq (i+2)\diam J_1\leq (i+2)(k_0m_0)^{-n}.\]
Therefore $h\in C_1^{\beta}[0,1]$ yields that
\[ |h(z_1)-h(z_2)|\leq \left(\frac{i+2}{(k_0m_0)^{n}}\right)^{\beta}.\]
On the other hand side, the definition of $g$, $f$ and $r$ imply that
\[ |g(z_1)-g(z_2)|\geq |f(z_1)-f(z_2)|-2r\geq  i\frac{\delta}{n_0k_0^{n}}-2r\geq \frac{i\delta}{3n_0k_0^{n}}.\]
The above inequalities and \eqref{eq:n1} yield that $|h(z_1)-h(z_2)|<|g(z_1)-g(z_2)|$, which is a contradiction.

Therefore
$\{g=h\}\cap I_0$ intersects at most $(k_0m_0)^{n_1}(2m_0)^{n_2-n_1}$ many elementary intervals of $I_0$ of level $n_2$. Since
the length of these intervals is less than $(k_0m_0)^{-n_2}$, the definition of $\sigma$ and
inequality \eqref{eq:n2} yield that
\begin{align*} \iH_{\infty}^{1-\beta+1/N}(\{g=h\}\cap I_0)&\leq (k_0m_0)^{n_1}(2m_0)^{n_2-n_1}
 (k_0m_0)^{-n_2(1-\beta+1/N)} \\
&\leq k_0^{n_1}\sigma^{n_2}\leq \frac{1}{M\ell n_0}.
\end{align*}
Hence \eqref{eq:Mn0} holds, and the proof is complete.
\end{proof}

In order to prove the second part of Theorem~\ref{t:gen} we need a bridge between
the notions of $\alpha$-H\"older continuity and $\beta$-variation.
The following lemma is \cite[Lemma~4.1]{BP}, see also \cite[Lemma~4.1]{ABPR}.

\begin{lemma} \label{l:BV}
Let $\beta,\gamma>0$ and let $A\subset [0,1]$. If the function $f\colon A \to \R$ has finite $\beta$-variation,
then there are sets $A_n\subset A$ such that
\begin{enumerate}
\item $f|_{A_n}$ is $\gamma$-H\"older continuous for all $n\in \N^+$,
\item $\dim_\iH \left(A\setminus \bigcup_{n=1}^{\infty} A_n\right)\leq \gamma \beta$.
\end{enumerate}
\end{lemma}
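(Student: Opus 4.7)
The plan is to construct the $A_n$ as natural H\"older-regular sublevel sets of a ``bad ratio'' functional and then show that the exceptional set $B:=A\setminus\bigcup_n A_n$ satisfies $\iH_\infty^{\gamma\beta}(B)=0$ by a covering argument that pairs each exceptional point with a witness of excessive oscillation. Concretely, I would set
\[
A_n=\bigl\{x\in A : |f(x)-f(y)|\le n|x-y|^{\gamma}\text{ for every }y\in A\bigr\},
\]
so that $f|_{A_n}$ is trivially $\gamma$-H\"older continuous with constant $n$. A point $x\in A$ then lies in $B$ precisely when, for every $N\in\N^+$, there exists $y^N_x\in A\setminus\{x\}$ with $|f(x)-f(y^N_x)|>N|x-y^N_x|^{\gamma}$.

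Next I would fix $N\in\N^+$ and, for each $x\in B$, choose such a witness $y_x=y^N_x$ and put $r_x=|x-y_x|\le 1$. Applying the $5r$-covering lemma to the family of closed balls $\{\overline{B}(x,r_x):x\in B\}$, I extract a pairwise disjoint subfamily $\{\overline{B}(x_i,r_i)\}_{i\in I}$ whose $5$-fold enlargements still cover $B$. Using \emph{closed} balls ensures that the segments $[\min(x_i,y_{x_i}),\max(x_i,y_{x_i})]$ are strictly disjoint, so their endpoints can be linearly ordered and inserted into finite partitions appearing in the definition of $V^{\beta}$, giving
\[
\sum_{i\in I}|f(x_i)-f(y_{x_i})|^{\beta}\le V^{\beta}(f|_A).
\]

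The defining inequality for the witnesses gives $r_i^{\gamma\beta}<N^{-\beta}|f(x_i)-f(y_{x_i})|^{\beta}$, so covering $B$ by the enlarged balls yields
\[
\iH_\infty^{\gamma\beta}(B)\le\sum_{i\in I}(10r_i)^{\gamma\beta}\le 10^{\gamma\beta}N^{-\beta}\,V^{\beta}(f|_A).
\]
Letting $N\to\infty$ forces $\iH_\infty^{\gamma\beta}(B)=0$, hence $\dim_\iH B\le\gamma\beta$, which is the required conclusion.

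The main obstacle is translating the geometric disjointness of the extracted balls into a legitimate $\beta$-variation sum: the definition of $V^{\beta}$ requires partition points that are \emph{strictly} increasing and all lie in $A$. Using closed balls (and hence non-touching closed intervals with endpoints in $A$ by construction) handles the strict-increase requirement; the countable-versus-finite discrepancy is resolved by bounding the sum over $I$ by the supremum of its finite sub-sums, each of which is a valid partition in the sense of $V^{\beta}(f|_A)$.
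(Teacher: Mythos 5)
Your proof is correct. Note that the paper does not prove this lemma itself but quotes it from the references \cite{BP} and \cite{ABPR}; your argument (sublevel sets $A_n$ of the pointwise H\"older quotient, then a Vitali $5r$-covering of the exceptional set whose disjoint witness intervals feed into a partition for $V^{\beta}(f|_A)$, giving $\iH^{\gamma\beta}_{\infty}(B)\le 10^{\gamma\beta}N^{-\beta}V^{\beta}(f|_A)\to 0$) is essentially the standard proof found there, and all the delicate points --- strict disjointness of the closed witness intervals, passing from finite partitions to the countable sum, and positivity of each $r_x$ --- are handled correctly.
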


\begin{proof}[Proof of Theorem~\ref{t:gen}~\eqref{eq:gen2}]
Clearly it is enough to prove the theorem for a countable dense set of parameters $\beta$. Since
co-meager sets are closed under countable intersection, it is enough to show the statement for an arbitrary fixed $\beta>0$.
For all $f\in C_1^{\alpha}[0,1]$ let $A_f\subset [0,1]$ be given such that $V^{\beta}(f|_{A_f})<\infty$.
Fix an arbitrary $\delta>\max\left\{\alpha\beta, \beta/(\beta+1)\right\}$ and let $\gamma=\delta/\beta>\max\left\{\alpha, 1/(\beta+1)\right\}$.
It is enough to prove that $\dim_\iH A_f\leq \delta$ for a generic $f\in C_1^{\alpha}[0,1]$.
Applying Lemma~\ref{l:BV} we obtain that for all $f\in C_1^{\alpha}[0,1]$
there are sets $A_{f,n}\subset A_f$ such that
\begin{enumerate}
\item $f|_{A_{f,n}}$ is $\gamma$-H\"older continuous for all $n\in \N^+$,
\item \label{eq:alpha} $\dim_\iH \left(A_f\setminus \bigcup_{n=1}^{\infty} A_{f,n}\right)\leq \gamma \beta=\delta$.
\end{enumerate}
As $\gamma>\alpha$ and $f|_{A_{f,n}}$ are $\gamma$-H\"older continuous, Theorem~\ref{t:gen}~\eqref{eq:gen1} and the definition of $\gamma$ imply
that for a generic $f\in C_1^{\alpha}[0,1]$ for all $n\in \N^+$ we have
\begin{equation} \label{eq:delta} \dim_\iH A_{f,n}\leq 1-\gamma < 1-\frac{1}{\beta+1}=\frac{\beta}{\beta+1}<\delta.
\end{equation}
Inequalities \eqref{eq:alpha}, \eqref{eq:delta}, and the countable stability of Hausdorff dimension
yield that $\dim_\iH A_f\leq \delta$ for a generic $f\in C_1^{\alpha}[0,1]$. The proof is complete.
\end{proof}


\begin{thebibliography}{99}

\bibitem{A} R.~J.~Adler, \textit{The geometry of random fields},
John Wiley \& Sons, 1981.

\bibitem{ABP} O.~Angel, R.~Balka, Y.~Peres, Increasing subsequences of random walks, \textit{Math.\ Proc.\ Cambridge Philos.\ Soc.}, to appear.

\bibitem{ABPR} T.~Antunovi\'c, K.~Burdzy, Y.~Peres, J.~Ruscher,
Isolated zeros for Brownian motion with variable drift, \textit{Electron.\ J.\ Probab.}\ \textbf{16} (2011), no. 65, 1793--1814.

\bibitem{BP} R.~Balka, Y.~Peres, Restrictions of Brownian motion, \textit{C.\ R.\ Math.\ Acad.\ Sci.\ Paris} \textbf{352} (2014), no.~12, 1057--1061.

\bibitem{DW} D.~Denisov, V.~Wachtel, Random walks in cones, \textit{Ann.\ Probab.}\ \textbf{43} (2015), no.~3, 992--1044. 

\bibitem{E} M.~Elekes, Hausdorff measures of different dimensions are isomorphic under the Continuum Hypothesis,
\textit{Real Anal.\ Exchange} \textbf{30} (2004), no. 2, 605--616.

\bibitem{Ev} S.~N.~Evans, On the Hausdorff dimension of Brownian cone points,
\textit{Math.\ Proc.\ Cambridge Philos.\ Soc.}\ \textbf{98} (1985), no. 2, 343--353.

\bibitem{F1} K.~Falconer, \textit{Fractal geometry: Mathematical
foundations and applications}, second edition, John Wiley \& Sons, 2003.

\bibitem{H} J.~E.~Hutchinson, Fractals and self-similarity, \textit{Indiana Univ.\ Math.\ J.}\ \textbf{30} (1981), no.~5, 713--747.

\bibitem{Ka} J.-P.~Kahane, \textit{Some random series of functions}, second edition, Cambridge University Press, 1985.

\bibitem{KK} J.-P.~Kahane, Y.~Katznelson, Restrictions of continuous functions, \textit{Israel J.\ Math.}\ \textbf{174} (2009), 269--284.

\bibitem{K} A.~S.~Kechris, \textit{Classical descriptive set theory}, Springer-Verlag, 1995.

\bibitem{LL} G.~F.~Lawler, V.~Limic, \textit{Random walk: A modern introduction}, Cambridge University Press, 2010.

\bibitem{MR} M.~B.~Marcus, J.~Rosen, \textit{Markov processes, Gaussian processes, and local times}, Cambridge University Press, 2006.

\bibitem{M:Borel} A.~M\'ath\'e,
Hausdorff measures of different dimensions are not Borel isomorphic, \textit{Israel J.\ Math.}\ \textbf{164} (2008),
285--302.

\bibitem{M} A.~M\'ath\'e, Measurable functions are of bounded variation on a set of
Hausdorff dimension $\frac{1}{2}$, \textit{Bull.\ London Math.\ Soc.}\ \textbf{45} (2013), 580--594.

\bibitem{Mo} G.~M.~Molchan, Maximum of a fractional Brownian motion: probabilities of small values, \textit{Comm.\ Math.\ Phys.}\ \textbf{205}
(1999), no.~1, 97--111.

\bibitem{MP} P.~M\"orters, Y.~Peres, \textit{Brownian Motion}, with an appendix by
Oded Schramm and Wendelin Werner, Cambridge University Press, 2010.

\bibitem{P} L.~D.~Pitt, Local times for Gaussian vector fields, \textit{Indiana Univ.\ Math.\ J.}\ \textbf{27} (1978), no. 2, 309--330.

\bibitem{S} K.~Simon, Some dual statements concerning Wiener measure and Baire category, \textit{Proc.\ Amer.\ Math.\ Soc.}\,
\textbf{106} (1989), no.~2, 455--463.

\bibitem{V} N.~Th.~Varopoulos, Potential theory in conical domains, \textit{Math.\ Proc.\ Cambridge Philos.\ Soc.}\
\textbf{125} (1999), no. 2, 335--384.

\end{thebibliography}
\end{document}